\newcommand{\1}{\mathds{1}}
\newcommand{\BN}{}
\newcommand{\EN}{}
\DeclareMathOperator* {\argmin}{arg\,min}
\newcommand{\Rset}{{\mathbb R}}
\newtheorem{assumption}{Assumption}
\newtheorem{definition}{Definition}
\newtheorem{theorem}{Theorem}[section]
\newtheorem{corollary}{Corollary}[theorem]
\newtheorem{lemma}[theorem]{Lemma}
\newtheorem{proposition}[theorem]{Proposition}
\newtheorem{remark}{Remark}
\newcommand{\bx}{\bm{x}}
\newcommand{\bb}{\bm{b}}
\newcommand{\bbn}{\bm{b}_n}
\newcommand{\bA}{\bm{A}}
\newcommand{\bz}{\bm{0}}
\newcommand{\bc}{\bm{c}}
\newcommand{\bd}{\bm{d}}
\newcommand{\bv}{\bm{v}}
\newcommand{\be}{\bm{e}}
\newcommand{\RR}{\mathbb{R}}
\newcommand{\cN}{\mathcal{N}}
\DeclareMathOperator{\dom}{dom}
\DeclareMathOperator{\inter}{int}
\DeclareMathOperator{\relint}{relint}
\author[1]{Shuyu Liu}
\author[2]{Florentina Bunea}
\author[1,3]{Jonathan Niles-Weed}
\affil[1]{Courant Institute of Mathematical Sciences, New York University, New York, NY, 10012}
\affil[2]{Department of Statistics and Data Science, Cornell University, Ithaca, NY, 14853}
\affil[3]{Center for Data Science, New York University, New York, NY, 10012}
\title{Beyond entropic regularization:   Debiased Gaussian estimators for discrete optimal transport and general linear programs
}
\date{}
\begin{document}

\maketitle
\begin{abstract}
    This work proposes new estimators for discrete optimal transport plans that enjoy Gaussian limits centered at the true solution. This behavior stands in stark contrast with the performance of existing estimators, including those
based on entropic regularization, which are asymptotically biased and only
satisfy a CLT centered at a regularized version of the population-level plan.
We develop a new regularization approach based on a different class of
penalty functions, which can be viewed as the duals of those previously considered in the literature. The key feature of these penalty schemes it that they
give rise to preliminary estimates that are asymptotically linear in the penalization strength. Our final estimator is obtained by constructing an appropriate linear combination of two penalized solutions corresponding to two
different tuning parameters so that the bias introduced by the penalization
cancels out. Unlike classical debiasing procedures, therefore, our proposal entirely avoids the delicate problem of estimating and then subtracting the
estimated bias term. Our proofs, which apply beyond the case of optimal transport, are based on a novel asymptotic analysis of penalization schemes
for linear programs. As a corollary of our results, we obtain the consistency
of the naive bootstrap for fully data-driven inference on the true optimal solution. Simulation results and two data analyses support strongly the benefits of our approach relative to existing techniques.
\end{abstract}

\section{Introduction}\label{sec:intro}
Optimal transport is an increasingly prominent tool in data analysis, with applications in causal inference~\citep{charpentier2023optimal,wang2023optimal,torous2024optimal}, hypothesis testing~\citep{ghosal2022multivariate,deb2023multivariate,shi2022distribution}, and domain adaptation~\citep{redko2019optimal,rakotomamonjy2022optimal,courty2016optimal}, as well as in computational biology~\citep{schiebinger2019optimal,tameling2021colocalization,bunne2023learning,
huizing2022optimal,
tang2025optimal} and high-energy physics~\citep{komiske2019metric,
cai2020linearized,
park2023neural}.
A chief virtue of optimal transport---as compared with other measures of discrepancy between probability measures, such as classical statistical divergences~\citep{bhattacharyya1943measure,kullback1951information,ali1966general}, MMD~\citep{gretton2006kernel}, or integral probability metrics~\citep{muller1997integral,sriperumbudur2009integral}---is that the definition of optimal transport gives rise both to a measure of similarity between two probability measures but also an optimal \emph{coupling} or \textit{plan}, which describes the least costly way to move mass from one measure to another.
In fact, in many of the applications described above, it is the plan rather than the distance itself which is the primary object of interest~\citep[see, e.g.,][]{charpentier2023optimal,courty2016optimal,ghosal2022multivariate,schiebinger2019optimal}.
The goal of this paper is to define a new estimator of the plan, with better properties than existing approaches.

We focus on the discrete version of the optimal transport problem, which compares two distributions $\bm{t}$ and $\bm{s}$ supported on a finite set of points $\{v_1, \dots, v_p\}$.
Associated to each pair of points $\{v_i, v_j\}$ is a \emph{cost} $c_{ij}$.
The optimal transport problem then reads
\begin{equation}\label{eq:basic_ot_problem}
	\min_{\pi \in \RR^{p \times p}} \sum_{i,j=1}^p \pi_{ij}c_{ij} \quad \quad \text{s.t. } \pi \bm 1 = \bm t, \pi^\top \bm 1 = \bm s, \pi \geq \bm 0\,.
\end{equation}
A solution $\pi^\star$ is the \emph{optimal plan}, which is a joint probability measure on $\{v_1, \dots, v_p\}^{2}$.
The discrete optimal transport problem is the one most commonly used in applications, since from a practical perspective most optimal transport problems, including those on continuous domains, are solved by binning the data and solving a discrete optimal transport problem on a finite set.

In statistical contexts, the population-level marginal distributions $\bm t$ and $\bm s$ are not know exactly, and the statistician only has access to $n$ i.i.d.\ samples from each measure.
A natural ``plug-in'' estimator $\pi_n$ of the optimal plan is obtained by solving an empirical version of~\eqref{eq:basic_ot_problem}, with $\bm t$ and $\bm s$ replaced by empirical frequencies $\bm t_n$ and $\bm s_n$, where
\begin{equation}\label{eq:model}
	\begin{aligned}
		n \bm t_n & \sim \mathrm{Mult}(n, \bm t) \\
		n \bm s_n & \sim \mathrm{Mult}(n, \bm s).
	\end{aligned}
\end{equation}

Note that $\bm t_n$ and $\bm s_n$ are maximum-likelihood estimators of $(\bm t, \bm s)$, and, by extension,  the plug-in estimator $\pi_n$ is also the maximum-likelihood estimator of $\pi^\star$. 
More explicitly, we can view $\pi^\star = \pi^\star(\bm t, \bm s)$ as a functional of the marginal measures $\bm t$ and $\bm s$.
The performance of the plug-in estimator $\pi_n$ depends on the properties of the mapping $(\bm t, \bm s) \mapsto \pi^\star$.
As we show below, the linear programming structure of~\eqref{eq:basic_ot_problem} implies that this functional is \textit{non-smooth}.
If is well known that such non-smooth functional estimation problems possess particular challenges.~\citep{AitSil58,MOMBound,Che54} 
In our setting, this can be easily seen through the lens of asymptotic bias, already visible in the simplest optimal transport problems.

A small example will make this phenomenon clear.
Consider a $2 \times 2$ optimal transport problem, with cost $c_{ij} = \1\{i \neq j\}$, which leads to the program
\begin{equation}\label{equ: 2by2 example}
	\min_{\pi \in \mathbb{R}^{2 \times 2}} \pi_{12} + \pi_{21}\,, \qquad  \textrm{s.t.}\  {\pi} \bm{1} = \bm{t}, {\pi}^\top \bm{1} = \bm{s}, \pi \geq \bm{0}.
\end{equation}
When $\bm{t} = \bm{s} = (1/2, 1/2)$, then the unique optimal plan is given by $\pi^\star = (1/2, 0; 0, 1/2)$.
Now, consider empirical frequencies $\bm t_n = (t_{n, 1}, t_{n, 2})$ and $\bm s_n = (s_{n, 1}, s_{n, 2})$ as in~\eqref{eq:model}, and construct the corresponding plug-in estimator $\pi_n$.
The following result characterizes the asymptotic behavior of $\pi_n$.
\begin{proposition}\label{prop:2by2solution}
	The plug-in estimator obtained by solving~\eqref{equ: 2by2 example} with the empirical marginals $(\bm t_n, \bm s_n)$ is given by
	\begin{equation}\label{eq:pi_n expression}
		\pi_n = \begin{pmatrix}
			\min\{t_{n, 1}, s_{n, 1}\} & (t_{n, 1} - s_{n, 1})_+\\
			(t_{n, 2} - s_{n, 2})_+ & \min\{t_{n, 2}, s_{n, 2}\}\,
		\end{pmatrix}\,,
	\end{equation}
	
	As a consequence,
	\begin{equation}\label{eq:pi_n limit}
		\sqrt n( \pi_n-\pi^\star) \overset{d}{\to} \frac 12 \begin{pmatrix}
			\min\{g_1, g_2\} & (g_1 - g_2)_+ \\
			(g_2 - g_1)_+ & -\max\{g_1, g_2\}
		\end{pmatrix}\,,
	\end{equation}
	where $g_1$ and $g_2$ are independent standard Gaussian random variables.
\end{proposition}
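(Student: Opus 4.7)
The plan is to first solve the linear program~\eqref{equ: 2by2 example} explicitly to obtain the closed-form expression~\eqref{eq:pi_n expression}, and then to pass to the distributional limit via the continuous mapping theorem applied to a two-dimensional CLT for the empirical marginals.

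For the first step, I would exploit the fact that the $2\times 2$ transportation polytope is one-dimensional: after imposing the marginal constraints together with $\mathbf{1}^\top \bm t_n = \mathbf{1}^\top \bm s_n = 1$, there is a single degree of freedom. Setting $a := \pi_{11}$, the constraints force $\pi_{12} = t_{n,1} - a$, $\pi_{21} = s_{n,1} - a$, and $\pi_{22} = 1 - t_{n,1} - s_{n,1} + a$, and nonnegativity of all four entries becomes
\[
\max\{0,\, t_{n,1} + s_{n,1} - 1\} \;\le\; a \;\le\; \min\{t_{n,1},\, s_{n,1}\}.
\]
The objective $\pi_{12} + \pi_{21} = t_{n,1} + s_{n,1} - 2a$ is strictly decreasing in $a$, so the unique optimum is the upper endpoint $a^\star = \min\{t_{n,1}, s_{n,1}\}$. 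Substituting and applying the elementary identities $x - \min\{x, y\} = (x-y)_+$ and $1 - \max\{x,y\} = \min\{1-x, 1-y\}$ with $x = t_{n,1}$, $y = s_{n,1}$ yields precisely~\eqref{eq:pi_n expression}.

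For the second step, the independence of $\bm t_n$ and $\bm s_n$ together with the binomial CLT yields
\[
\sqrt n \bigl(t_{n,1} - 1/2,\, s_{n,1} - 1/2\bigr) \;\overset{d}{\to}\; \tfrac{1}{2}(g_1, g_2)
\]
for independent standard Gaussians $g_1, g_2$. Because $t_{n,2} = 1 - t_{n,1}$ and $s_{n,2} = 1 - s_{n,1}$, each of the four entries of $\pi_n - \pi^\star$ is a continuous, positively $1$-homogeneous function of the two scalar deviations $t_{n,1} - 1/2$ and $s_{n,1} - 1/2$; for example, $\sqrt n(\pi_{n,11} - 1/2) = \min\{\sqrt n(t_{n,1} - 1/2),\, \sqrt n(s_{n,1} - 1/2)\}$ and $\sqrt n \pi_{n,12} = \bigl(\sqrt n(t_{n,1}-1/2) - \sqrt n(s_{n,1}-1/2)\bigr)_+$. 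The continuous mapping theorem, applied jointly to all four entries of $\sqrt n(\pi_n - \pi^\star)$, then produces the limit matrix in~\eqref{eq:pi_n limit}.

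No substantive obstacle arises here; the proposition is meant as a warm-up exhibiting the failure of asymptotic normality for plug-in LP estimators. The only point deserving explicit attention is that the four entries of the limit matrix must be read off from the \emph{same} Gaussian pair $(g_1, g_2)$, so that~\eqref{eq:pi_n limit} asserts joint rather than merely marginal convergence; this is automatic because the continuous mapping theorem is applied once to a single two-dimensional convergent random vector and therefore preserves the joint law of all continuous functionals of it.
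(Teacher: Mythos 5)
Your proposal is correct and follows essentially the same approach as the paper: derive the closed form of the plug-in plan from the one-dimensional parametrization of the $2\times 2$ transportation polytope, then obtain the limit law via the bivariate CLT for $(t_{n,1}, s_{n,1})$ and the continuous mapping theorem applied jointly. The only cosmetic difference is that you derive $\pi_n$ constructively by observing the objective is monotone in the free variable $a = \pi_{11}$, whereas the paper verifies the claimed formula by checking feasibility and showing any feasible perturbation strictly increases the cost.
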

Examining~\eqref{eq:pi_n expression}, we see that the plug-in estimator $\pi_n$ is a non-smooth function of the empirical frequencies~$(\bm t_n, \bm s_n)$.
Geometrically, this is a consequence of the constraints in~\eqref{eq:basic_ot_problem}---the feasible set is the intersection of an affine subspace (determined by linear constraints $\pi \bm 1 = \bm t, \pi^\top \bm 1 = \bm s$) with a polyhedral cone (determined by the nonnegativity constraint $\pi \geq \bm{0}$).
Small perturbations of the linear constraints can lead to abrupt changes in the geometry of this set, which in turn gives rise to non-differentiable changes in the optimal solution.

This lack of smoothness implies that $\pi_n$ is \textit{asymptotically biased},
in the classical sense that the limit of $\sqrt n(\pi^\star - \pi_n)$ is a non-centered random variable.
For instance, examining the northwest corner entry, we see that the limit of $\sqrt{n}((\pi_n)_{11} - \pi^\star _{11})$ is the minimum of two independent centered Gaussians, and therefore has negative mean.
Far from being specific to this example, asymptotic bias is in fact a general phenomenon plaguing the plan estimation problem: prior work~\citep{LiuBunNil23,KlaMunZem22} has shown that a limit of the form
\begin{equation*}
	\sqrt{n}(\pi_n - \pi^\star) \overset{d}{\to} h(\bm g)
\end{equation*}
for a Gaussian vector $\bm g$ and for a non-smooth function $h$.
The limiting random variable $h(\bm g)$ will only be centered in exceptional cases.

Previous studies of the statistical properties of the discrete optimal transport problem have sought to address this issue via regularization schemes~\citep{ferradans2014regularized,courty2014domain,essid2018quadratically}, the most prominent of which is entropic optimal transport, discussed in detail in Section \ref{sec: entropic} below. 
By adding a strictly convex regularizer, this approach guarantees that the optimal solution is a smooth function of the marginal distributions, thereby avoiding the pathologies of the plug-in estimator.
This regularization brings substantial statistical benefits: for example, under suitable assumptions on the regularizer $\varphi$, for a \textit{fixed} regularization parameter $\lambda > 0$, the solution $\pi_{\lambda, n}$ to the empirical regularized problem
\begin{equation}\label{eq:reg_ot_problem}
	\min_{\pi \in \RR^{p \times p}} \sum_{i,j=1}^p \pi_{ij}c_{ij} + \lambda \varphi(\pi) \quad \quad \text{s.t. } \pi \bm 1 = \bm t_n, \pi^\top \bm 1 = \bm s_n
\end{equation}
satisfies $\sqrt n(\pi_{\lambda, n} - \pi^\star_\lambda) \overset{d}{\to} \cN(0, \Sigma_{\lambda, \bm t, \bm s})$, for some $\Sigma_{\lambda, \bm t, \bm s} \in \mathbb R^{(p \times p) \times (p \times p)}$, where $\pi^\star_\lambda$ denotes the solution to the population level version of~\eqref{eq:reg_ot_problem}, with $\bm t_n$ and $\bm s_n$ replaced by $\bm t$ and $\bm s$, respectively.
The empirical estimator $\pi_{\lambda, n}$ therefore enjoys a centered Gaussian limit around the \textit{regularized} solution $\pi^\star_\lambda$, which \textit{does not agree} with $\pi^\star$ when $\lambda > 0$.

Existing regularization schemes therefore yield estimators with Gaussian limits, but at a price: these regularization schemes introduce an additional source of ``bias'' by replacing the original solution $\pi^\star$ with the solution to the modified program.
This is visible from the decomposition
\begin{equation*}
	\sqrt{n}(\pi_{\lambda, n} - \pi^\star) = \underset{\text{(I)}}{\underbrace{\sqrt{n}(\pi_{\lambda, n} - \pi^\star_\lambda)}} +\underset{\text{(II)}}{\underbrace{\sqrt{n}(\pi^\star_\lambda - \pi^\star)}} \,,
\end{equation*}
which shows that $\pi_{\lambda, n}$ will only be an asymptotically unbiased estimator of $\pi^\star$ if the both (I) and (II) can be controlled simultaneously.
In fact, as we show in Section~\ref{sec: entropic}, there is no reasonable way to tune the regularization parameter to obtain an estimator with good asymptotic properties: if $\lambda = \lambda_n$ tends to zero as $n \to \infty$ too slowly, then (II) is too large and the bias introduced by regularization is asymptotically dominant, whereas if $\lambda = \lambda_n$ tends to zero too quickly, then the benefits of regularization are lost and (I) no longer has a centered Gaussian limit.
In short, despite the success of regularization in the statistical analysis of optimal transport, all existing approaches give rise to asymptotically biased estimators of the optimal plan.

This paper introduces a novel approach to this problem.
We design a new class of regularization methods whose solutions can be easily \textit{debiased} to yield estimators $\hat \pi_n$ which enjoy Gaussian limits at the $\sqrt n$ rate, centered at the \textit{true} optimal plan:
\begin{equation}
	\sqrt n(\hat \pi_n - \pi^\star) \overset{d}{\to} \cN(0, \tilde \Sigma_{\lambda, \bm t, \bm s})\,.
\end{equation}
Moreover, the regularizers we propose are already popular in the context of interior point methods for convex optimization, so the solutions to our regularized programs can be computed quickly with off-the-shelf software.
We show in experiments on real and simulated data that this approach can be used to construct valid confidence sets in situations where existing methods fail.
To our knowledge, this represents the first method to construct asymptotically unbiased estimators and Gaussian confidence sets for optimal transport plans.

In fact, our approach applies in significantly more generality than the optimal transport problem: our theory applies to any ``standard form'' linear program
\begin{equation}\label{primal}
	\min_{\bm{x}\in\mathbb{R}^m} \langle\bm{c},\bm{x}\rangle,\qquad \text{s.t.}\ \bm{Ax}=\bm{b},\ \bm{x}\geq\bm{0},
\end{equation}
of which~\eqref{eq:basic_ot_problem} is, in historical terms, the earliest and most fundamental example~\citep[Historical notes]{schrijver1998theory}.
We consider the setting where $\bm{A} \in \mathbb{R}^{k\times m}$ and $\bm{c} \in \mathbb{R}^m$ are fixed, in which case the solution $\bx^\star$ to~\eqref{primal} is determined entirely by $\bb$.

Our interest is in the statistical setting where $\bb$ is replaced by a random counterpart $\bbn$, which satisfies, for some rate $q_n\rightarrow+\infty$,
\begin{equation}
	\label{converge bn}
	q_n(\bm{b}_n-\bm{b})\overset{d}\to{\bm G},
\end{equation}
for a non-degenerate random variable $\bm G$.
We construct estimators $\hat \bx_n$ such that
\begin{equation}\label{converge xn}
	q_n(\hat \bx_n - \bx^\star) \overset{d}{\to} \bm M^\star \bm G\,,
\end{equation}
for a deterministic matrix $\bm M^\star$.
In particular, as advertised above, when the limit distribution in~\eqref{converge bn} is a centered Gaussian, the same will be true of the limit in~\eqref{converge xn}.
This opens the door to asymptotically valid inference for any application of linear programming, for example, to the min-cost flow problem~\citep{ahuja1993network}.
As in the case of optimal transport, ours is the first valid asymptotic inference methodology with Gaussian limits for these applications.

We consider a class of regularized versions of the linear program~\eqref{primal} of the form
\begin{equation}
	\label{general penalty problem}
	\bm{x}(r,\bm{b})=\argmin_{\bm{x}\in\mathbb{R}^m} f_r(\bm{x}),\qquad \text{s.t.}\ \bm{Ax}=\bm{b},
\end{equation}
where $f_r(\bx)$ is a regularized version of the objective function and $r > 0$ is a tunable regularization parameter.
This gives rise to a penalized form of the empirical problem
\begin{equation}
	\label{ penalty problem with bn}
	{\bm{x}}(r_n,\bm{b}_n)=\argmin_{\bm{x}\in\mathbb{R}^m} f_{r_n}(\bm{x}),\qquad \text{s.t.}\ \bm{Ax}=\bm{b}_n\,,
\end{equation}
where $r_n$ is a carefully chosen sequence which converges to $0$ as $n \to \infty$.
Since both~\eqref{general penalty problem} and~\eqref{ penalty problem with bn} lack the nonnegativity constraint in~\eqref{primal}, $f_r$ will be chosen so that as $r \to 0$, the function $f_r(\bx)$ converges to $\langle \bc, \bx \rangle$ if $\bx > \bm 0$ but diverges to $+ \infty$ if any entry of $x$ is negative.
Intuitively, this property ensures that, when $r$ is small, solutions to~\eqref{general penalty problem} are close to those of the original linear program.

Our main insight is to choose the regularization in such a way that, under suitable assumptions on the decay of $r_n$, the solution $ x(r_n, \bb_n)$ of~\eqref{ penalty problem with bn} satisfies an asymptotic expansion of the form
\begin{equation}\label{eq:heuristic_asymptotic}
	 \bx(r_n, \bb_n) = \bx^\star + r_n \bd^\star + \bm M^\star(\bb_n - \bb) + \text{lower order terms}\,,
\end{equation}
for some deterministic vector $\bd^\star$ and matrix $\bm M^\star$.
The term $r_n \bd^\star$ can be viewed as the ``bias,'' of order $r_n$, introduced by the regularization scheme.
The crucial fact we rely on to develop our estimator is that this additional bias is \textit{linear} in the regularization parameter; this stands in sharp contrast to the case of entropic regularization, for example, for which the bias term is non-linear (see Section~\ref{sec: entropic}).

On its own,~\eqref{eq:heuristic_asymptotic} seems to indicate that achieving asymptotically unbiased estimation of $\bx^\star$ will require separately estimating the term $r_n \bd^\star$, perhaps via a more involved statistical procedure.
This is the approach traditionally taken in the literature on, for example, the debiased lasso~\citep{ZhaZha13}.
However, we exploit the linearity of the bias term to bypass the need to estimate it explicitly.
Given ${\bm{x}}(r_n,\bm{b}_n)$ defined by~\eqref{ penalty problem with bn}, our proposed estimator takes the form
\begin{equation}\label{eq:extrapolation_estimator}
	\hat \bx_n = 2 \bx(\tfrac{r_n}{2}, \bb_n) - \bx (r_n, \bb_n)\,.
\end{equation}
This trick---which has been rediscovered many times in the literature, for instance under the name ``Richardson extrapolation'' in numerical analysis~\citep{Ric11} and ``twicing'' in statistics~\citep{newey2004twicing,zhang2012twicing}---provides an easy solution for the problem of bias.
Indeed, using~\eqref{eq:heuristic_asymptotic}, we observe that $\hat \bx_n$ satisfies
\begin{equation*}
	\hat \bx_n = \bx^\star + \bm M^\star(\bb_n - \bb) + \text{lower order terms}\,.
\end{equation*}
Comparing this expression with~\eqref{eq:heuristic_asymptotic}, the bias term has completely disappeared.
Moreover, the fact that $\hat \bx_n$ is an asymptotically linear estimator implies that the estimator $\hat \bx$ enjoys centered Gaussian limits around the population-level quantity whenever $\bb_n$ does.
As we show in Section~\ref{sec: bootstrap}, this result opens the door to practical inference for $\bx^\star$ via the na\"ive boostrap.
We show in simulation that the resulting confidence sets are much more accurate than existing approaches for plan inference in the literature.

The remainder of the paper is structured as follows.
We review prior work in Section~\ref{sc:prior}.
We demonstrate some deficiencies of the entropic penalty in Section~\ref{sec: entropic} and use our analysis of the entropic penalty to motivate the penalization approach we develop in this work, which we specify precisely in Section~\ref{sec:preliminaries}.
To develop the asymptotic expansion~\eqref{eq:heuristic_asymptotic}, we first consider the case of where $\bb_n = \bb$ in Section~\ref{sec: trajectory of nonrandom}, then extend our results to the case of randomly perturbed programs in Section~\ref{sec: trajectory of random LP}.
Finally, we define our debiased estimator in Section~\ref{sec:trajectory with bn} and prove consistency of the bootstrap in Section~\ref{sec: bootstrap}.
Simulation and experimental results appear in Sections~\ref{sec:discussion} and~\ref{sec:numerical}.

\subsection{Prior work}\label{sc:prior}
There is a long history of studying the asymptotic properties of stochastic optimization problems~\citep[see, e.g.,][]{Sha91,Sha93,KinRoc93,PolJud92,DupWet88}.
In statistics, the classical large sample theory of maximum-likelihood estimation provides a paradigmatic example, reaching back to the 1940's~\citep{Wal43, Wal49}.
An important insight stemming from this work is that the asymptotic behavior of solutions depends heavily on the regularity assumptions imposed on optimization problem: in the most benign scenario (e.g., unconstrained smooth and strongly convex optimization problems), centered Gaussian limits are the norm, and while these conditions can be weakened somewhat~\citep{AitSil58,Hub67}, in general Gaussian limits are the exception rather than the rule~\citep{Sha89}.
Moreover, though Gaussian limits can arise in the solutions to non-classical problems~\citep{DavDruJia24, DucRua21}, this phenomenon is typically limited to the situation of an optimization problem with a \textit{random} objective function on a \textit{fixed} constraint set.

In light of this work, the random linear programs we study---with a non-strongly-convex objective and random constraints---represent a challenging edge case.
Prior work studying the asymptotic properties of solutions to random linear programs reveals that they possess Gaussian limits only in exceptional circumstances~\citep{KlaMunZem22,LiuBunNil23}.
In general, the limits are non-Gaussian and asymptotically biased.

The explosive interest in regularization schemes for optimal transport has been partially motivated by the search for estimators with better asymptotic properties.
Entropic regularization---along with its variants---gives rise to Gaussian limiting distributions for the optimal solution~\citep{KlaTamMun20}, a phenomenon that even extends to the case of continuous marginals~\citep{HarLiuPal24,GonLouNil22,GolKatRio24,GonHun23}, but they are {\it not} centered at the true optimal plan.
In some cases, under strong regularity assumptions, it is possible to carefully tune the regularization to obtain Gaussian limits centered at the true optimal plan~\citep{Mor24,ManBalNil23}, albeit not at the $\sqrt n$ rate.

Taking a linear combination of estimators with different regularization parameters, as in~\eqref{eq:extrapolation_estimator}, is related to a classical idea in numerical analysis known as ``Richardson extrapolation''~\citep{Ric11,Joy71}, which was rediscovered in the statistics literature by Tukey under the name ``twicing''~\citep{StuMit79,Tuk77}.
In recent years, \cite{Bac21} has highlighted the effectiveness of Richardson extrapolation in machine learning applications, including  regularizing convex optimization problems.
This perspective has been applied to obtaining better estimates for the cost of the optimal transport problem by~\cite{ChiRouLeg20}.

At a technical level, our results are most similar to prior work studying the ``convergence trajectories'' of penalized linear programs~\citep{1994asymptotic,AusComHad97}. In the particular case of the exponential penalty function, \cite{1994asymptotic} show an asymptotic expansion of the form~\eqref{eq:heuristic_asymptotic} when $\bbn = \bb$. \cite{AusComHad97} analyze a more general class of penalty functions, which include those studied in this work, but do not obtain a full expansion akin to~\eqref{eq:heuristic_asymptotic}.

\subsection{Notation}
We work in an asymptotic framework where $n \to \infty$ (and the regularization parameter $r = r_n \to 0$).
The symbols $O(\cdot)$ and $o(\cdot)$, as well as their probabilistic counterparts $O_p(\cdot)$ and  $o_p(\cdot)$, are to be understood in this setting.
We adopt the shorthand $a_n \ll b_n$ to mean $a_n = o(b_n)$.

Given a function $g: \mathcal{X} \to \RR \cup \{+ \infty\}$ for some $\mathcal{X} \subseteq \RR^d$, its convex conjugate $g^* : \RR^d \to \RR \cup \{+ \infty\}$ is defined by
\begin{equation*}
	g^* (y) = \sup_{x \in \mathcal{X}} \langle x, y \rangle - g(x)\,.\
\end{equation*}
The function $g^* $ is always convex and lower semi-continuous.
If $g$ shares these same properties, then $(g^*)^*  = g$.

Thoughout this work, we permit functions to take values in the extended reals $\RR \cup \{+ \infty\}$.
Given such a function $g$, we write $\dom g = \{x: g(x) < +\infty\}$ for the domain of $g$.
Given a convex set $X$, we write $\inter X$ for its interior and $\relint X$ for its relative interior (that is, its interior with respect to the subset topology on the affine span of $X$).

Throughout, our goal is to perform inference on the solution $\bx^\star$ to~\eqref{primal}, where $\bc$ and $\bA$ are fixed and known but we only have access to a random $\bbn$ in place of $\bb$.
We use star superscripts for quantities ($\bx^\star, \bd^\star, \bm M^\star$) that depend on the true, unknown vector $\bb$.

\section{The problem of bias in entropic optimal transport}
\label{sec: entropic}
The most popular form of regularized optimal transport is \textit{entropic optimal transport}, which corresponds to~\eqref{eq:reg_ot_problem} with the choice
\begin{equation}
	\varphi(\pi) = \sum_{i,j = 1}^p \pi_{ij} \log \pi_{ij}\,.
\end{equation}
Though originally motivated by computational considerations~\citep{ChiRouLeg20,peyre2019computational}, entropic optimal transport  is now known to bring significant statistical benefits as well~\citep{Mor24,GonHun23,GolKatRio24,KlaTamMun20}.
For example, \cite{KlaTamMun20} showed a central limit theorem for the empirical entropic optimal transport plan $\pi_{\lambda, n}$, \emph{centered at the population solution to the regularized program $\pi_\lambda^\star$}.
Their asymptotic results hold when the regularization parameter $\lambda$ is fixed and $n$ tends to infinity. 
Concretely, they show
\begin{equation*}
	\sqrt{n}(\pi_{\lambda,n} - \pi_\lambda^\star ) \overset{d}{\to} \mathcal{N}(0, \Sigma_{\lambda, \bm{t}, \bm{s}})
\end{equation*}
As Klatt et al. note \citep[Section 3.2]{KlaTamMun20}, this behavior is not expected in general: if $\lambda = \lambda_n$ tends to zero as $n \to \infty$ at a sufficiently fast rate, then the limit law will no longer be Gaussian.
On the other hand, if $\lambda = \lambda_n$ tends to zero more slowly, it is reasonable to conjecture that bias will dominate.

The following result makes this intuition precise.
We say a function $h(\lambda)$ decays exponentially fast to $0$ if $\lambda\log(1/h(\lambda))$ is bounded away from zero and infinity as $\lambda \to 0$.
For convenience, we impose the additional technical condition that $\pi^\star$ is non-degenerate (see proof for a precise definition) in the slow decay regime of $\lambda_n$, but similar conclusions apply to the general case as well.
\begin{proposition}
	\label{proposition: ROT}
	Fix a sequence $\lambda = \lambda_n \to 0$, and assume that $\sqrt n(\bm t_n - \bm t)$ is stochastically bounded away from zero and infinity. 
	If $\lambda \gg \frac{1}{\log n}$ and $\pi^\star$ is non-degenerate, then as $n \to \infty$ the entropy-regularized empirical optimal transport plan can be written as  
	\begin{equation}
		\label{equ: ROT expansion large regularizer}
		\sqrt n (\pi_{\lambda, n} - \pi^\star) = \sqrt n \bm{E}(\lambda)+\sqrt n \bm{M}(\bm{t}_n-\bm{t})+O_p(\epsilon(\lambda))
 	\end{equation}
	for a matrix $\bm{M}$, and where $\epsilon(\lambda)$ and $\bm{E}(\lambda)$ denote two terms that decay exponentially fast to zero (at rates which depend on $\pi^\star$).
	
	On the other hand, if  $\lambda \ll \frac{1}{\log n}$, then
	\begin{equation}
		\label{equ: ROT expansion}
		\sqrt n (\pi_{\lambda, n} -\pi^\star) =\sqrt n(\pi_n - \pi^\star) +O_p(\epsilon(\lambda)),
	\end{equation}
	where $\epsilon(\lambda)$ decays exponentially fast to zero.
\end{proposition}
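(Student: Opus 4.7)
The plan is to decompose
\begin{equation*}
\pi_{\lambda,n} - \pi^\star = (\pi_\lambda^\star - \pi^\star) + (\pi_{\lambda,n} - \pi_\lambda^\star),
\end{equation*}
separating the deterministic bias introduced by regularization from the stochastic sampling error.  The analysis of both pieces flows from the standard dual representation $\pi_{\lambda, ij} = \exp((f_i + g_j - c_{ij})/\lambda)$ in terms of Sinkhorn potentials $(f, g)$ solving the smooth entropic dual, together with the observation that the limits $(f^\star, g^\star)$ as $\lambda \to 0$ are optimal dual variables for the unregularized LP.  Under non-degeneracy, $f_i^\star + g_j^\star = c_{ij}$ on the support of $\pi^\star$ and $f_i^\star + g_j^\star \le c_{ij} - d$ off the support for some reduced-cost gap $d > 0$; this gap is what drives all of the exponential decay rates that appear in the theorem.

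For the first regime, I would first establish an exponential perturbation bound $(f_\lambda, g_\lambda) = (f^\star, g^\star) + O(e^{-c/\lambda})$ via strong convexity of the smoothed dual restricted to an appropriate face, then substitute back into the closed form for $\pi_\lambda^\star$ to obtain $\bm{E}(\lambda) := \pi_\lambda^\star - \pi^\star = O(e^{-c/\lambda})$.  For the sampling piece, the map $(\bm t, \bm s) \mapsto \pi_\lambda^\star(\bm t, \bm s)$ is real-analytic for any fixed $\lambda > 0$, so a first-order Taylor expansion gives $\pi_{\lambda,n} - \pi_\lambda^\star = \bm{M}_\lambda (\bm t_n - \bm t) + O_p(1/n)$, where $\bm M_\lambda$ is the Jacobian derived from the KKT system via the implicit function theorem; the same exponential-perturbation argument yields $\bm{M}_\lambda = \bm{M} + O(e^{-c'/\lambda})$.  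Multiplying by $\sqrt n$, using $\|\bm t_n - \bm t\| = O_p(1/\sqrt n)$, and collecting remainders gives~\eqref{equ: ROT expansion large regularizer}; the non-exponential $O_p(1/\sqrt n)$ remnant from the quadratic Taylor term is absorbed into $O_p(\epsilon(\lambda))$ because the assumption $\lambda \gg 1/\log n$ forces $e^{-c'/\lambda}$ eventually to dominate $1/\sqrt n$.

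For the second regime I would re-run the same exponential-perturbation analysis but at the \emph{empirical} marginals: conditional on $(\bm t_n, \bm s_n)$, the empirical LP has an optimal basic plan $\pi_n$ with some dual gap $d_n$, and the Sinkhorn fixed-point analysis delivers $\pi_{\lambda,n} - \pi_n = O(e^{-c d_n / \lambda})$.  Continuity of the LP solution in the marginals ensures $d_n \ge d_0 > 0$ on a high-probability event once $(\bm t_n, \bm s_n)$ lies in a small neighborhood of $(\bm t, \bm s)$, and in the regime $\lambda \ll 1/\log n$ the factor $\sqrt n$ is swamped by $e^{-c d_0/\lambda}$, yielding~\eqref{equ: ROT expansion}.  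Non-degeneracy of $\pi^\star$ is not required here, because the relevant gap $d_n$ is a property of the (generically non-degenerate) empirical LP.  The main technical obstacle in both regimes is precisely the exponential-rate Sinkhorn perturbation bound---showing that $(f_\lambda, g_\lambda) \to (f^\star, g^\star)$ at rate $O(e^{-c/\lambda})$ with $c$ controlled only by the reduced-cost gap; the second regime additionally demands that this bound be uniform over a shrinking neighborhood of $(\bm t, \bm s)$, which requires a stability argument for the optimal basis of the empirical LP.
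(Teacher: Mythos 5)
Your high-level decomposition and the reliance on the reduced-cost gap to drive exponential rates are correct and are the conceptual heart of the argument, but your proposed route is genuinely different from the paper's, and there is a gap in the second regime worth flagging.

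\textbf{A different route to the same expansion.} You propose to prove the exponential Sinkhorn perturbation bound $(f_\lambda, g_\lambda) = (f^\star, g^\star) + O(e^{-c/\lambda})$ directly, and then apply the implicit function theorem to the fixed-$\lambda$ map $(\bm t, \bm s)\mapsto \pi_\lambda^\star$ to get a Jacobian $\bm M_\lambda$, closing with the limit $\bm M_\lambda = \bm M + O(e^{-c'/\lambda})$. The paper avoids both of these: for the bias term, it observes that the \emph{dual} entropic problem, $\min_{\bm\xi}\langle\bm\xi,(\bm t,\bm s)\rangle + \lambda\sum \exp(-(\bm c - \bm A_0^\top\bm\xi)/\lambda)$, is exactly an instance of the penalized LP of Theorem~\ref{theorem: unique solution general penalty} (with the exponential penalty), so the expansion $\bm\eta_\lambda = \bm\eta^\star + \lambda\bm d^\star + o(\lambda)$ of the dual slack variables follows immediately, and exponentiating at rate $1/\lambda$ gives $\lambda\log\|\bm E(\lambda)\| = \Theta(1)$. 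For the linear term, rather than differentiating the entropic map, the paper \emph{defines} $\bm M$ via the basis of the unregularized LP, sets $\pi'_{n,\lambda} = \pi_\lambda + \bm M(\bm t_n - \bm t)$, and bounds $\pi_{\lambda,n} - \pi'_{n,\lambda}$ using convexity and the strong-growth bound of Lemma~\ref{lem:new_norm_bound} applied to the entropy. Your route is more in the spirit of standard entropic-OT analysis and would plausibly work, but both of your exponential-rate claims (the potential bound and $\bm M_\lambda \to \bm M$) are asserted rather than proven, and the paper's approach gets them for free from machinery it has already built.

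\textbf{A gap in the second regime.} You write that ``non-degeneracy of $\pi^\star$ is not required here, because the relevant gap $d_n$ is a property of the (generically non-degenerate) empirical LP.'' This is not correct: the empirical marginals $\bm t_n$ are lattice-valued (multiples of $1/n$), so $\pi(\bm t_n, \bm s)$ can be, and in structured examples frequently is, a higher-dimensional optimal face rather than a vertex, and no genericity argument rescues this. The paper handles the degenerate case by working with the entropy-centroid $\pi_n^\star = \arg\min_{\pi\in\pi(\bm t_n,\bm s)}\sum f(\pi_{ij})$ of the empirical optimal face (Lemma~\ref{auxilary lemma for 2.1}), which lies in the relative interior so that strict complementarity holds. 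A second subtlety you gloss over is that the positive entries of the empirical plan can be as small as $O_p(1/\sqrt n)$, so the entropy derivative $\log\pi_{n,ij}$ blows up like $O_p(\log n)$ near the boundary; this is exactly where the regime condition $\lambda\ll 1/\log n$ enters, via the requirement $\|\bm\alpha\| = O_p(\log n) = o_p(1/\lambda)$ so that the uniform reduced-cost gap $\eta_0/\lambda$ dominates. Your phrasing ``in the regime $\lambda\ll 1/\log n$ the factor $\sqrt n$ is swamped by $e^{-cd_0/\lambda}$'' captures only the end effect, not this mechanism, and without it the proof of the exponential bound uniformly over the empirical polytope does not go through.
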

To interpret the above result, we note that in the first regime, if $\lambda \gg \frac{1}{\log n}$, then the regularization term is relatively large, and the ``bias'' term $\bm E(\lambda)$ dominates.
In particular, since $\lambda \log(1/\bm E(\lambda)) \not \to \infty$, the term $\bm E(\lambda)$ satisfies
\begin{equation*}
	\sqrt n \bm{E}(\lambda) \to \infty \quad  \text{as $n \to \infty$,}
\end{equation*}
so $\sqrt n(\pi_{\lambda, n} - \pi^\star)$ cannot converge to a centered limit.

On the other hand, in the second regime, if $\lambda \ll \frac{1}{\log n}$, then the regularization term is small enough that the asymptotic limit of $\sqrt n (\pi_{\lambda, n} - \pi^\star)$ is the same as that of $\sqrt n (\pi_{n} - \pi^\star)$, so any asymptotic benefits of the regularization scheme are lost.

Though we do not rigorously characterize the behavior of $\pi_{\lambda, n}$ when $\lambda \asymp \frac{1}{\log n}$, it is reasonable to conjecture on the basis of our proof that~\eqref{equ: ROT expansion large regularizer} still holds in this case.
But note that this implies that whether the bias term $\bm E(\lambda)$ is asymptotically larger or smaller than $n^{-1/2}$ depends on the precise \textit{constant} that $\lambda \log n$ converges to, as well as the (unknown) rate at which $\bm E(\lambda)$ decays.
Tuning the regularization parameter this precisely is clearly outside the reach of a statistician interested in practical inference.

Our goal in this work is to design a different scheme, which is much less sensitive to regularization strength, and which naturally yields centered Gaussian limits.

\subsection{Designing our alternative scheme}\label{sec:alt_scheme}
As Proposition~\ref{proposition: ROT} makes clear, the central difficulty in using entropic optimal transport to obtain asymptotically unbiased estimators is the fact that the bias term $\bm E(\lambda)$ is not explicit.
We therefore consider alternative regularization schemes whose bias can be explicitly characterized.
As we shall see, the resulting regularizers are, in a precise sense, the convex duals of those that have been previously considered in the literature.

To motivate our proposal, we return to the key idea, mentioned in Section~\ref{sec:intro}, that our regularization should be such that the bias term is \textit{linear} in the regularization parameter.
We develop this idea for the simplest linear program: given a vector $\bc$ with positive coordinates, consider
\begin{equation}
	\min_{\bm{x}\in\mathbb{R}^m} \langle\bm{c},\bm{x}\rangle,\qquad \text{s.t.}\, \bm{x}\geq\bm{0},
\end{equation}
the optimal solution to which is clearly $\bx^\star = \bm{0}$.
We aim to design a regularized version of the same problem:
\begin{equation}\label{eq:simple_pen}
	\min_{\bm{x}\in\mathbb{R}^m} f_r(\bx)
\end{equation}
where $f_r(\bx) = \langle \bm c, \bm x \rangle + \psi_r(\bx)$ for a strictly convex function $\psi_r$ to be chosen.

First-order optimality conditions imply that the solution $\bx_r$ to~\eqref{eq:simple_pen} satisfies
\begin{equation*}
	\bm c + \nabla \psi_r(\bx_r) = 0\,,
\end{equation*}
leading to the explicit expression
\begin{equation}\label{eq:bxr}
	\bx_r = (\nabla \psi_r)^{-1}(- \bm c)\,,
\end{equation}
where the invertibility of $\nabla \psi_r$ is guaranteed by the strict convexity of $\psi_r$.
Since $\psi_r$ is convex, its inverse gradient satisfies $(\nabla \psi_r)^{-1} = \nabla \phi_r$, where we have defined $\phi_r = \psi_r^* $ to be the convex conjugate of $\psi_r$.
The desired linearity of the bias term is therefore equivalent to the requirement that as $r \to 0$
\begin{equation}\label{eq:desired_expansion}
	\bx_r - \bx^\star = \nabla \phi_r(-\bm c) = r \bd^\star + o(r)\,,
\end{equation}
where $\bd^\star $ is some vector depending on $\bm c$ but not on $r$.
The simplest means of guaranteeing this expansion is to simply assume that $\phi_r$ is a linear function of $r$; that is, that
\begin{equation}\label{eq:linear_ansatz}
	\phi_r(- \bm c) = r \varphi(\bm c)
\end{equation}
for some fixed convex function $\varphi$, so that~\eqref{eq:desired_expansion} holds with $\bd^\star = - \nabla \varphi(-\bm c)$.
Since convex duality implies that $\psi_r = \phi_r^* $, we therefore obtain
\begin{align*}
	\psi_r(\bx) &= \sup_{\bm y} \langle \bx, \bm y \rangle - \phi_r( \bm y) \\
	& = \sup_{\bm y} \langle \bx, \bm y \rangle -r \varphi(- \bm y) \\
	& = r \sup_{\bm y} \langle - \bx/r, \bm y \rangle - \varphi(\bm y) \\
	& = r \varphi^* (-\bx/r)\,.
\end{align*}

The above \textit{ansatz} is the heart of our approach.
We will assume that our regularizer takes the form $r \varphi^* (-\bx/r)$ for a convex function $\varphi$.
For computational simplicity, we will further assume that $\varphi$ is separable, so that $\varphi(\bm y) = \sum_{i=1}^m q(y_i)$ for some univariate convex $q$.
Under these assumptions, the objective of our regularized program~\eqref{general penalty problem} ultimately reads
\begin{align}
	\bm{x}(r,\bm{b}) & =\argmin_{\bm{x}\in\mathbb{R}^m} f_r(\bm{x}),\qquad \text{s.t.}\ \bm{Ax}=\bm{b}, \nonumber \\
	& \phantom{=} \text{ where } f_r(\bm{x}):=\langle\bm{c},\bm{x}\rangle + r\sum_{i=1}^m p(-\frac{x_i}{r}), \label{equ:penalized function}
\end{align}
for a convex function $p := q^* $.
This is the form of regularization that we study in what follows.

\begin{remark}\label{remark:dual_of_existing}
	Prior work on regularization for optimal transport~\citep{KlaTamMun20} has focused almost exclusively on regularizers of the form $\lambda \varphi(\pi)$ for some convex $\phi$, as in~\eqref{eq:reg_ot_problem}.
	Recalling~\eqref{eq:linear_ansatz}, we observe that the regularizations we study are in fact the convex conjugates of those typically considered in the optimal transport literature.
	Our results reveal that the effect of this ``dual regularization'' is very different from that of standard regularizers, and necessitates the development of new proof techniques.
	In fact, our new techniques also yield insight into traditional regularizers such as the entropy function, see Proposition~\ref{proposition: ROT}.
\end{remark}

\section{Technical preliminaries}\label{sec:preliminaries}
Throughout the paper, we make certain mild assumptions on the random vector $\bm{b}_n$, the linear program, and the choice of penalty function.
We detail these assumptions below.
\subsection{Assumption on the random vector}
\begin{assumption}\label{assumption:G}
	The limiting random variable in~\eqref{converge bn} satisfies $P({\bm G}=\bm 0)=0$.
\end{assumption}
This assumption is automatically satisfied in our primary setting of interest, when $\bm G$ is a non-degenerate Gaussian.
In particular, Assumption~\ref{assumption:G} guarantees that $q_n$ is the proper scaling for convergence of $\bm{b}_n$ to $\bb$.

\subsection{Assumptions on the linear program}

\begin{assumption}
    The linear program satisfies the following conditions:
    \label{assumption on LP}
    \begin{enumerate}
         \item The solution to~\eqref{primal} exists and is unique.
        \item \emph{(Slater's condition)} The feasible region $\mathcal{D}=\{\bx: \bA \bx = \bb, \bx \geq \bz\}$ of~\eqref{primal} has non-empty relative interior and there exists $\bm{x}\in \relint \mathcal{D}$ such that $\bm{x}>0$. 
        \item The constraint matrix $\bA$ in~\eqref{primal} has full row rank.
    \end{enumerate}
\end{assumption}
These assumptions are not restrictive and are natural in this setting.
The assumption that~\eqref{primal} has a unique solution is true of most linear programs that arise in practice\footnote{Indeed, uniqueness holds for all but a measure-zero set of costs $\bc$.} and is necessary to be able to formulate standard distributional limit results for the stochastic analogue of~\eqref{primal}.
Likewise, Slater's condition guarantees that the solution to~\eqref{primal} remains feasible under small perturbations of $\bb$, which guarantees that~\eqref{ penalty problem with bn} is well defined.
Finally, the assumption that $\bm{A}$ has full rank is without loss of generality, as redundant constraints can always be removed without changing the program. 

\subsection{Valid penalty functions}
The penalty function $p$, which we always assume to be convex and lower semi-continuous, plays a central role in this work.
As Section~\ref{sec:alt_scheme} makes clear, it is natural to make assumptions on $q$, where $p = q^*$.

\begin{assumption}\label{new assumptions on p}
	The penalty function $p: \Rset \to \Rset \cup \{+ \infty\}$ is given by $p = q^* $, where $q: (0, \infty) \to \Rset$ is twice differentiable, has locally Lipschitz and strictly positive second derivative, and satisfies $\lim_{x \to 0} q'(x)=-\infty$,  $\lim_{x \to \infty} q'(x)\geq 0$
	
\end{assumption}
In words, we require that $q$ is a strictly convex barrier function for the set $(0, \infty)$.
As we shall see, this property of $q$ implies that the dual of~\eqref{general penalty problem} can be written without inequailty constraints, which significantly simplifies the analysis.

This assumption implies the following properties of $p$.

\begin{lemma}\label{lem:p_properties}
	Under Assumption~\ref{new assumptions on p}, the following properties hold:
	\begin{enumerate}
		\item The domain of $p$ contains $(-\infty, 0)$.
        \item $p'(x)$ is unbounded above. 
		\item $\lim_{x \to -\infty} p'(x) = 0$.
		\item $p''$ is strictly positive and locally Lipschitz on its domain.
	\end{enumerate}
\end{lemma}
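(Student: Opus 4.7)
The plan is to work directly from the definition of the convex conjugate, $p(y) = \sup_{x > 0}[xy - q(x)]$, and exploit the barrier structure of $q$ to identify the supremum with a unique maximizer. The key preliminary observation is that $q'$, being continuous and strictly increasing on $(0,\infty)$ (since $q'' > 0$), is a bijection from $(0,\infty)$ onto the open interval $(-\infty, L)$, where $L := \lim_{x\to\infty} q'(x) \in [0, +\infty]$. Denote its inverse by $x^*(y) := (q')^{-1}(y)$, defined for $y \in (-\infty, L)$. For such $y$, the concave map $x \mapsto xy - q(x)$ has a unique critical point $x^*(y) > 0$, and hence $p(y) = x^*(y)\, y - q(x^*(y)) < \infty$. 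Since $L \geq 0$, this already gives item 1, because $(-\infty, 0) \subseteq (-\infty, L) \subseteq \dom p$.

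For items 2 and 3, I would invoke the envelope theorem (or differentiate the explicit formula above) to obtain, on the interior of $\dom p$,
\begin{equation*}
p'(y) = x^*(y).
\end{equation*}
As $y \to -\infty$, the identity $q'(x^*(y)) = y$ combined with monotonicity of $q'$ and the assumption $\lim_{x \to 0} q'(x) = -\infty$ forces $x^*(y) \to 0$, which is item 3. For item 2, as $y \to L^-$ (noting $L \geq 0$), the same identity $q'(x^*(y)) = y$ forces $x^*(y) \to \infty$ (if $L < \infty$ this is because $q'$ is unbounded below on every punctured neighborhood of $\infty$ in the extended sense; if $L = \infty$ this is immediate). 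Either way $p'$ is unbounded above on its domain.

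For item 4, I would apply the inverse function theorem to $q'$: since $q'$ is $C^1$ on $(0,\infty)$ with strictly positive derivative $q''$, the inverse $x^*$ is $C^1$ on $(-\infty, L)$ with $(x^*)'(y) = 1/q''(x^*(y))$. Hence on the interior of $\dom p$ we obtain
\begin{equation*}
p''(y) = \frac{1}{q''(x^*(y))}.
\end{equation*}
Strict positivity is immediate. For local Lipschitzness, I would fix a compact subinterval $K \subset \inter \dom p$; its image $x^*(K)$ is compact in $(0,\infty)$, and on that compact set $q''$ is bounded away from zero and locally Lipschitz, so $1/q''$ is Lipschitz there; composing with $x^*$ (itself Lipschitz on $K$ as the inverse of a $C^1$ map with strictly positive derivative) yields local Lipschitzness of $p''$.

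I do not anticipate a substantive obstacle beyond careful bookkeeping; the content is essentially standard convex-conjugate calculus combined with the inverse function theorem. The one subtlety is that the lemma speaks of $p'$ and $p''$ without specifying the domain: I would implicitly restrict to $\inter \dom p = (-\infty, L)$ throughout items 2--4, which is legitimate since $p$ is a proper convex lower semicontinuous function and its derivatives are well-defined precisely on this open set under Assumption~\ref{new assumptions on p}.
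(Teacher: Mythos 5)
Your proof is correct and lands on exactly the same two identities the paper relies on, namely $p' = (q')^{-1}$ and $p'' = 1/(q'' \circ p')$; the difference is how you get there. The paper invokes Rockafellar's theory of Legendre-type convex functions (\cite[Theorem~26.5]{rockafellar1997convex}), which packages the claim that $\nabla p = (\nabla q)^{-1}$ is a homeomorphism between the interiors of the respective domains, and then reads off all four items from $\operatorname{im} q' = \inter\dom p$ and $\operatorname{im} p' = \inter\dom q = (0,\infty)$. You instead derive the same identity from first principles: the supremum in $p(y) = \sup_{x>0}[xy - q(x)]$ is attained at the unique critical point $x^*(y) = (q')^{-1}(y)$, and then the envelope theorem and the inverse function theorem give $p' = x^*$ and $p'' = 1/(q''\circ x^*)$. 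Your route is more elementary and self-contained, at the cost of a bit more bookkeeping; the paper's is shorter but opaque unless the reader has Rockafellar open. One small wrinkle: your parenthetical justification in item~2 ("$q'$ is unbounded below on every punctured neighborhood of $\infty$") is garbled and not the right statement; the clean argument is simply that $x^* = (q')^{-1}$ is a bijection from $(-\infty,L)$ onto $(0,\infty)$, so $p' = x^*$ is surjective onto $(0,\infty)$ and hence unbounded above (this is also exactly how the paper phrases it via $\operatorname{im} p' = (0,\infty)$). The conclusion is correct either way, so this is a wording slip rather than a gap.
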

In the following sections, we will need to choose the sequence of penalty coefficient $r_n$ based on how fast the derivative of the penalty function decays to zero at $-\infty$. Therefore, to facilitate the discussion, we define the decay rate of a function at $-\infty$:
\begin{definition}[Decay rate of a function at $-\infty$]
\label{decay rate}
Let $\beta: \Rset \to \Rset$ be any continuous function such that $\lim_{r \to 0} \beta(r) = 0$.
We say a function $f$ decays to zero at $-\infty$ at rate $\beta(r)$ if

\begin{equation}
    \lim_{r\downarrow 0} \frac{f(-\frac{1}{r})}{\beta(r)} <\infty\,.
\end{equation}

\end{definition}

Table~\ref{tab:functions} lists several examples of penalty functions that satisfy the assumptions: the log-barrier function, inverse polynomial functions, smoothed quadratic penalty function, and the exponential function.
The admissibility of the log-barrier function is particularly notable, since penalties of this type are already widely used in interior point methods for linear programming~\citep{BoyVan04}.








\begin{figure}[ht]
    \centering
\begin{minipage}{0.65\textwidth}
\centering
   
\begin{tabular}{ |c|c|c| }

\hline
Name& Domain & Decay rate of $p'(x)$ \\
\hline
log-barrier function  &\multirow{2}*{ $(-\infty,0)$} & \multirow{2}*{ $r$} \\
$p(x)=-\ln(-x)$&~&~\\
~&~&~\\

inverse polynomial functions &\multirow{2}*{ $(-\infty,0)$ }&  \multirow{2}*{$r^{\alpha+1}$}\\
$p(x)=\frac{1}{|x|^{\alpha}},\ \alpha>0$&~&~\\
~&~&~\\
smoothed quadratic penalty function &\multirow{2}*{ $(-\infty,\infty)$ }&  \multirow{2}*{Any polynomial of $r$} \\
$p(x)=(\log(1+\exp(x)))^2$&~&~\\
~&~&~\\
exponential function & \multirow{2}*{$(-\infty,\infty)$ } & \multirow{2}*{Any polynomial of $r$}\\
$p(x)=\exp(x)$&~&~\\

\hline

\end{tabular}
\end{minipage}
\hfill
\begin{minipage}{0.30\textwidth}
    \centering
   \includegraphics[width=\textwidth]{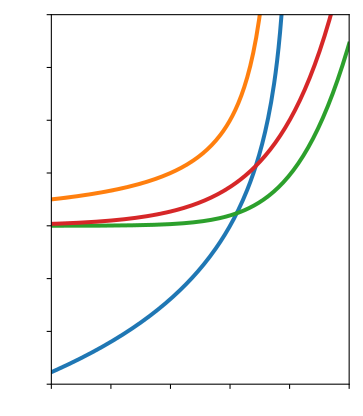}
\end{minipage}
\caption{Examples of proper penalty functions (table on the left) and plot of these functions (figure on the right): log-barrier function (blue), inverse polynomial function with $\alpha=1$ (yellow), smoothed quadratic penalty function (green), exponential function (red).  }
    \label{tab:functions}
\end{figure}
%

%

\section{Convergence of trajectory of the penalized linear program}\label{sec: trajectory of nonrandom}
In this section, we focus on the ``population'' version of the penalized linear program, with $\bbn = \bb$.
Our goal is to precisely characterize the bias induced by the regularization term as a stepping stone to~\eqref{eq:heuristic_asymptotic}.
Specifically, we study the relationship between~\eqref{primal} and~\eqref{general penalty problem} as $r \to 0$, and the main result of this section is that $\bx(r, \bb) - \bx^\star$ is asymptotically linear in $r$.
This extends a result of \cite{1994asymptotic}, who showed an analogous claim in the special case of the exponential penalty $p(x) = \exp(x)$.
An important feature of our result, as compared with that of~\cite{1994asymptotic}, is that we identify the magnitude of the error term, and show that it depends on the rate of decay of $p'$ at $-\infty$ given by Definition \ref{decay rate}. 

We first show that that the solution to~\eqref{general penalty problem} is unique.
\begin{proposition}
\label{existence and uniqueness}
Under Assumptions~\ref{assumption on LP} and~\ref{new assumptions on p}, the penalized program~\eqref{equ:penalized function} has a unique solution, which satisfies $\bx(r, \bb) \in \inter \dom f_r$.
\end{proposition}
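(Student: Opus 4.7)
The plan is to decompose the proof into three stages—strict convexity, existence via coercivity, and interiority at the optimum—each leveraging different conclusions of Lemma~\ref{lem:p_properties} together with Assumption~\ref{assumption on LP}. Strict convexity is immediate: by Lemma~\ref{lem:p_properties}, item 4, $p''$ is strictly positive on $\dom p$, so each term $x_i \mapsto r\,p(-x_i/r)$ is strictly convex in $x_i$, and hence $f_r$ is strictly convex on $\dom f_r$. Combined with the convex affine feasible set, this gives uniqueness of the solution once existence is established.

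For existence, Slater's condition supplies an $\bx_0 > \bz$ with $\bA \bx_0 = \bb$, and since $(-\infty, 0) \subseteq \dom p$ we have $f_r(\bx_0) < \infty$, so $f_r$ is proper. It is also lower semicontinuous (the conjugate $p = q^*$ is lsc, so $f_r$ is a sum of lsc terms), so a minimizer exists once the recession value
\[
    f_r^\infty(\bd) = \lim_{\lambda \to \infty} f_r(\bx_0 + \lambda \bd)/\lambda
\]
is shown to be strictly positive for every nonzero $\bd \in \ker \bA$. Two preliminary observations are key: $p' \geq 0$ wherever it is defined (since $p'$ is nondecreasing and $\lim_{x \to -\infty} p'(x) = 0$), and consequently $p(y) = o(|y|)$ as $y \to -\infty$. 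If some $d_i < 0$, then $-(x_{0,i} + \lambda d_i)/r \to +\infty$, and since $p'$ is unbounded above (Lemma~\ref{lem:p_properties}, item 2), convexity forces $p$ to grow superlinearly at the right edge of $\dom p$ (or to equal $+\infty$ past it), giving $f_r^\infty(\bd) = +\infty$. If instead $\bd \geq \bz$ and $\bd \neq \bz$, then $\bx^\star + \lambda \bd$ is feasible for the original LP~\eqref{primal} for every $\lambda \geq 0$, so uniqueness of $\bx^\star$ (Assumption~\ref{assumption on LP}, item 1) forces $\langle \bc, \bd \rangle > 0$; meanwhile the penalty terms contribute $o(\lambda)$ in view of the sublinear decay of $p$ at $-\infty$, so $f_r^\infty(\bd) = \langle \bc, \bd \rangle > 0$. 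The recession cone of any sublevel set intersected with $\{\bA \bx = \bb\}$ therefore reduces to $\{\bz\}$, so sublevel sets are bounded and closed, and a minimizer exists.

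For interiority, let $L = \lim_{x \to \infty} q'(x) \in [0, \infty]$, so that $\inter \dom p = (-\infty, L)$ and $\inter \dom f_r = \{\bx : x_i > -rL \text{ for all } i\}$. If $L = \infty$ or $L \notin \dom p$, then $\dom f_r$ is open and the claim is automatic. Otherwise $L < \infty$ and $L \in \dom p$; suppose for contradiction that $-x(r,\bb)_i/r = L$ for some $i$. Because $p'$ is the inverse of $q'$ on $\inter \dom p$ and $q'(x) \to L$ only as $x \to \infty$, the left-derivative of $p$ at $L$ equals $+\infty$. Taking $\bv := \bx_0 - \bx(r,\bb) \in \ker \bA$, one has $v_i = x_{0,i} + rL > 0$, so the one-sided directional derivative of $f_r$ at $\bx(r,\bb)$ in direction $\bv$ picks up a $-\infty$ contribution from coordinate $i$ while every other term is finite, contradicting first-order optimality.

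The hardest step is the coercivity analysis, where three properties of $p$ must be juggled simultaneously: superlinear growth at the right edge of $\dom p$ (to kill recession directions with a negative entry), sublinear decay at the left edge (so that for nonnegative $\bd$ the penalty does not absorb the linear cost), and the uniqueness of the LP solution $\bx^\star$ (to ensure $\langle \bc, \bd \rangle > 0$ along nonnegative directions). Once these are in hand, translating the boundary behavior of $q$ into the infinite left-derivative of $p$ at $L$ via convex duality makes the interiority step routine.
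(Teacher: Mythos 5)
Your proof is correct, but it takes a genuinely different route to \emph{existence} than the paper does. The paper dispatches existence by Fenchel duality: the penalized program~\eqref{equ:penalized function} is the dual of $\max_{\bm\lambda} \langle \bb,\bm\lambda\rangle - r\sum_i q(c_i - (\bA^\top\bm\lambda)_i)$, whose Slater condition (existence of $\bm\lambda$ with $\bc - \bA^\top\bm\lambda > \bz$) follows from boundedness of the LP's optimal set via~\citep[Corollary 7.1k]{schrijver1998theory}, and Fenchel duality then guarantees a primal minimizer. You instead argue coercivity directly via the recession function $f_r^\infty$ on $\ker\bA$, splitting into two cases—a negative coordinate of $\bd$ forces $p$'s argument to the right edge of $\dom p$ where convexity plus unboundedness of $p'$ gives superlinear growth (or $+\infty$), while $\bd \ge \bz$, $\bd \ne \bz$ forces $\langle\bc,\bd\rangle > 0$ by uniqueness of $\bx^\star$, with the penalty contributing only $o(\lambda)$ since $p' \ge 0$ and $p'(-\infty) = 0$ give $p(y) = o(|y|)$. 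This is more elementary and self-contained, and it makes the role of LP uniqueness in ruling out flat recession directions explicit, at the price of being somewhat longer; the duality route is terser but leans on two nontrivial imported facts. Strict convexity and hence uniqueness is handled identically in both.

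For interiority, both proofs use the same mechanism—an infinite directional derivative at the boundary of $\dom f_r$ toward the strictly feasible point $\bx_0$—but your version is slightly more careful: you explicitly separate the cases $L = \infty$, $L < \infty$ with $L \notin \dom p$ (where $\dom f_r$ is already open so there is nothing to prove), and $L < \infty$ with $L \in \dom p$, and you correctly identify the boundary of $\dom f_r$ as $\{x_i = -rL\}$ rather than $\{x_i = t\}$. The paper's statement of this step is looser on both points, though the underlying idea is the same. One small remark: in the nonnegative-recession-direction case you write ``$\bx^\star + \lambda\bd$ is feasible,'' which is fine, but you could equally have used $\bx_0 + \lambda\bd$ since you only need \emph{some} feasible ray along $\bd$ to invoke uniqueness of $\bx^\star$; as written the logic is correct either way.
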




The following theorem provides a first-order expansion of the solution to the penalized program~\eqref{general penalty problem} around the solution $\bx^\star$ to~\eqref{primal} of the form $\bm{x}(r,\bm{b})=\bm{x}^\star+r\bm{d}^\star+o(r)$, where $\bd^\star$ is the solution to an auxiliary optimization problem.

Denote by $I_0:=\{i\in[m]| x^\star_i=0\}$ the set of zero entries in the optimal solution $\bx^\star$.
\begin{theorem}[Convergence of trajectory of the penalized program]
    \label{theorem: unique solution general penalty}
Under Assumptions~\ref{assumption on LP} and~\ref{new assumptions on p}, the solution to~\eqref{general penalty problem} can be written as
\begin{equation}
\bm{x}(r,\bm{b})=\bm{x}^\star+r\bm{d}^\star+O(r\beta(r)) \quad \text{as $r \to 0$,}
\end{equation}
where $\bx^\star$ is the true optimal LP solution, $\beta(r)$ is the decay rate of $p'$ at $-\infty$, and $\bm{d}^\star$ is the unique solution to 
\begin{equation}
\label{equ for 1st order}
    \min_{\bm{d}\in\mathbb{R}^m} \langle \bm{c},\bm{d} \rangle+\sum_{i\in I_0} p(-d_i), \quad \text{s.t. } \bm{Ad}=0.
\end{equation}
\end{theorem}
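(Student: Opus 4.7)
The plan is to compare the first-order optimality conditions of the penalized program~\eqref{general penalty problem} with those of the auxiliary program~\eqref{equ for 1st order}, and then convert the resulting discrepancy into a quantitative bound. Since Proposition~\ref{existence and uniqueness} ensures $\bm{x}(r, \bm{b}) \in \inter \dom f_r$, the Lagrangian of~\eqref{general penalty problem} involves only equality constraints, and the KKT system reduces to
\begin{equation*}
c_i - p'\bigl(-x_i(r,\bm{b})/r\bigr) = (\bm{A}^\top \bm{\lambda}(r))_i \quad (i \in [m]), \qquad \bm{A} \bm{x}(r, \bm{b}) = \bm{b},
\end{equation*}
with $\bm{\lambda}(r)$ unique by Assumption~\ref{assumption on LP}(3). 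The KKT system of~\eqref{equ for 1st order} reads $c_i - p'(-d^\star_i) = (\bm{A}^\top \bm{\mu})_i$ for $i \in I_0$, $c_i = (\bm{A}^\top \bm{\mu})_i$ for $i \notin I_0$, and $\bm{A} \bm{d}^\star = \bm{0}$.

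After establishing consistency $\bm{x}(r, \bm{b}) \to \bm{x}^\star$ as $r \to 0$---which follows from a standard epi-convergence argument using Slater's condition and uniqueness of $\bm{x}^\star$---I would decompose $\bm{x}(r, \bm{b}) = \bm{x}^\star + r \bm{d}(r)$, so that $\bm{A} \bm{d}(r) = \bm{0}$ and
\begin{equation*}
-x_i(r,\bm{b})/r = -d_i(r) \text{ for } i \in I_0, \qquad -x_i(r, \bm{b})/r = -x^\star_i/r - d_i(r) \text{ for } i \notin I_0.
\end{equation*}
The penalized KKT conditions then become
\begin{equation*}
c_i - p'(-d_i(r)) = (\bm{A}^\top \bm{\lambda}(r))_i \ (i \in I_0), \qquad c_i - p'(-x^\star_i/r - d_i(r)) = (\bm{A}^\top \bm{\lambda}(r))_i \ (i \notin I_0),
\end{equation*}
which match the auxiliary KKT up to the error $p'(-x^\star_i/r - d_i(r))$ for $i \notin I_0$. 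Provided $\bm{d}(r)$ is bounded in $r$, Definition~\ref{decay rate} ensures $p'(-x^\star_i/r - d_i(r)) = O(\beta(r))$.

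To promote this $O(\beta(r))$ perturbation of the KKT system into the bound $\bm{d}(r) - \bm{d}^\star = O(\beta(r))$, I would exploit that uniqueness of $\bm{x}^\star$ together with Assumption~\ref{assumption on LP}(3) forces the columns of $\bm{A}_{:, I_0^c}$ to be linearly independent (otherwise a direction $\bm{h}$ supported on $I_0^c$ with $\bm{A}\bm{h} = \bm{0}$ would produce an alternate LP optimum). The $i \notin I_0$ equations then uniquely determine $\bm{\lambda}(r) = \bm{\mu} + O(\beta(r))$. Substituting back into the $i \in I_0$ equations and inverting $p'$---whose inverse is locally Lipschitz near the values $c_i - (\bm{A}^\top \bm{\mu})_i$ by the positivity and local Lipschitzness of $p''$ (Assumption~\ref{new assumptions on p} and Lemma~\ref{lem:p_properties})---yields $\bm{d}_{I_0}(r) = \bm{d}^\star_{I_0} + O(\beta(r))$. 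The constraint $\bm{A} \bm{d}(r) = \bm{0}$ then transfers this bound to $\bm{d}_{I_0^c}(r)$, and multiplying by $r$ gives the claim.

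The main obstacle in this program is the a priori boundedness of $\bm{d}(r)$: without it, the $O(\beta(r))$ estimate on the $i \notin I_0$ equations cannot be taken for granted, since $-x^\star_i/r - d_i(r)$ might fail to tend to $-\infty$. I would handle this by bootstrapping through the KKT system. Consistency of $\bm{x}(r,\bm{b})$ alone gives $p'(-x_i(r,\bm{b})/r) \to 0$ for $i \notin I_0$, so $(\bm{A}^\top \bm{\lambda}(r))_i \to c_i$ for such $i$; linear independence of the columns of $\bm{A}_{:, I_0^c}$ then pins $\bm{\lambda}(r)$ in a compact neighborhood of the LP dual solution. Boundedness of $\bm{\lambda}(r)$ yields boundedness of $p'(-d_i(r))$, and hence of $d_i(r)$ for $i \in I_0$ by strict monotonicity of $p'$; $\bm{d}_{I_0^c}(r)$ is then controlled via the same column-rank argument applied to $\bm{A}\bm{d}(r) = \bm{0}$, closing the loop.
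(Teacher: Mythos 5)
Your proposal takes a genuinely different route from the paper: you argue via the first-order (KKT) systems of the two programs and try to solve them sequentially, first pinning down the dual multiplier $\bm{\lambda}(r)$ from the $I_0^c$ equations, then $\bm{d}_{I_0}(r)$ from the $I_0$ equations, and finally $\bm{d}_{I_0^c}(r)$ from $\bm{Ad}(r)=\bm{0}$. The paper instead works purely on the primal side: it sets $\bm{d}(r)=(\bm{x}(r,\bm{b})-\bm{x}^\star)/r$, writes $g_r = g + h_r$ with $h_r(\bm{d}) = \sum_{i\notin I_0} p(-x_i^\star/r - d_i)$ convex, uses the optimality of $\bm{d}(r)$ for $g_r$ and of $\bm{d}^\star$ for $g$ to get $g(\bm{d}(r))-g(\bm{d}^\star)\le \langle \nabla h_r(\bm{d}^\star),\bm{d}^\star-\bm{d}(r)\rangle = O(\beta(r)\|\bm{d}^\star-\bm{d}(r)\|)$, and then invokes a geometric growth lemma (Lemma~\ref{lem:new_norm_bound}, combined with the local strong convexity of $g$ on $\ker\bm{A}$) to convert this one-sided bound into $\|\bm{d}(r)-\bm{d}^\star\|=O(\beta(r))$. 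That argument never needs consistency of $\bm{x}(r,\bm{b})$, boundedness of $\bm{d}(r)$, or any control of the dual variables: those all come out as by-products. Your observation that uniqueness of $\bm{x}^\star$ forces the columns of $\bm{A}_{I_0^c}$ to be linearly independent is correct, and it is exactly Lemma~\ref{lemma: I_0 is enough} in the paper (used there to prove local strong convexity of $g$ on $\ker\bm{A}$).

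There is, however, a genuine gap in your argument. You claim that the $i\notin I_0$ equations, together with linear independence of the columns of $\bm{A}_{I_0^c}$, "uniquely determine $\bm{\lambda}(r)=\bm{\mu}+O(\beta(r))$" and "pin $\bm{\lambda}(r)$ in a compact neighborhood of the LP dual solution." This is only true when $|I_0^c|=k$, i.e., when $\bm{x}^\star$ is non-degenerate. Assumption~\ref{assumption on LP} requires uniqueness of $\bm{x}^\star$ but not non-degeneracy, and a unique degenerate vertex (with $|I_0^c|<k$) can occur even under Slater's condition. In that case the columns $\{\bm{a}_i : i\in I_0^c\}$ are linearly independent but do not span $\mathbb{R}^k$, so the system $\bm{a}_i^\top\bm{\lambda}=c_i-p'(-x_i(r,\bm{b})/r)$, $i\in I_0^c$, leaves $\bm{\lambda}(r)$ underdetermined along a nontrivial affine subspace; it cannot be confined to a compact set, let alone pinned to $O(\beta(r))$ of a specific $\bm{\mu}$. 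Consequently the sequential strategy of solving for $\bm{\lambda}(r)$ first and only then for $\bm{d}_{I_0}(r)$ breaks down: in the degenerate case the $I_0$ and $I_0^c$ equations must be solved jointly (e.g., via an implicit-function-theorem argument on the full KKT system), which your outline does not do. A secondary issue is the boundedness bootstrap: even granting $\bm{\lambda}(r)$ bounded, "boundedness of $p'(-d_i(r))$, and hence of $d_i(r)$, by strict monotonicity of $p'$" does not follow, because $p'(-x)\to 0$ as $x\to+\infty$, so a bounded (indeed vanishing) value of $p'(-d_i(r))$ is compatible with $d_i(r)\to+\infty$. The paper's primal/Lemma~\ref{lem:new_norm_bound} route sidesteps both of these difficulties.
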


The proof is based on the following observation.
Write $\bd(r) =(\bm{x}(r, \bb)-\bm{x}^\star)/r$.
Then, since $\bx(r, \bb)$ minimizes~\eqref{general penalty problem} and $\bA \bx^\star = \bb$, we obtain that
\begin{equation}\label{eq:delta_char}
	\bd(r) = \argmin_{\bd \in \Rset^m} g_r(\bd)\,, \quad \quad \text{s.t. } \bA \bd = \bz\,,
\end{equation}
where we define
\begin{equation}\label{eq:gr_def}
	g_r(\bd) =  \langle \bc, \bd \rangle + \sum_{i=1}^m p(-\frac{x_i^\star}{r} - d_i)\,.
\end{equation}
Suppose that we adopt the additional assumption that $p(-x) \to 0$ as $x \to \infty$.
Then $\lim_{r \to 0} g_r(\bd) = \langle \bc, \bd \rangle + \sum_{i \in I_0} p( - d_i)$.
Therefore, comparing~\eqref{eq:delta_char} with~\eqref{equ for 1st order}, we observe that~\eqref{equ for 1st order} is the $r \to 0$ limit of~\eqref{eq:delta_char}.
This suggests, that $\bd(r) \to \bd^\star$, so that $\bx(r, \bb) = \bx^\star + r \bd^\star + o(r)$.
The full proof of this theorem in Appendix~\ref{sec:theorem_proof} shows that this intuition holds even if $p(x) \not \to 0$ as $x \to \infty$, and shows that the error term depends on the behavior of $p'$ at $-\infty$ quantified by the decay rate $\beta$.
%
%
%


\section{Asymptotic expansion of the solution to the random penalized program}\label{sec: trajectory of random LP}
The goal of this section is to extend Theorem~\ref{theorem: unique solution general penalty} to the setting where the vector $\bb$ is replaced by a random counterpart $\bb_n$.
The following theorem is deterministic, and gives an asymptotic expansion for the solution $\bx(r, \bb')$, where $\bb'$ is a small perturbation of $\bb$ such that $\bb' \to \bb$ as $r \to 0$.
We then use this result to obtain a limit law for $\bx(r_n, \bb_n)$ where $\bb_n$ satisfies~\eqref{converge bn} and $r_n$ is a sequence of penalization parameters with prescribed decay.

We first establish that the penalized program remains feasible under small perturbations.
\begin{proposition}\label{prop:peturb_feasibility}
	There exists a neighborhood $U$ of $\bb$ such that for any $r > 0$, the penalized program~\eqref{general penalty problem} is feasible and has a unique solution when $\bb$ is replaced by any $\bb' \in U$.
\end{proposition}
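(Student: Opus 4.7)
The plan is to verify, for $\bb'$ in a suitable neighborhood $U$ of $\bb$, three ingredients that together give a unique minimizer of the penalized program: (i) non-empty intersection of the affine subspace $\{\bx : \bA \bx = \bb'\}$ with $\inter \dom f_r$; (ii) strict convexity of $f_r$, which yields uniqueness; and (iii) attainment of the infimum of $f_r$ over that subspace, which yields existence.

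For (i), I would invoke Slater's condition (Assumption~\ref{assumption on LP}(2)) to fix $\bx_0 > 0$ with $\bA \bx_0 = \bb$. Since $\bA$ has full row rank (Assumption~\ref{assumption on LP}(3)), it admits a right inverse $\bA^+$, and the continuous map $\bb' \mapsto \bx_0 + \bA^+(\bb' - \bb)$ supplies a feasible point that equals $\bx_0 > 0$ at $\bb' = \bb$. Choosing $U$ small enough---with size depending only on the distance from $\bx_0$ to the boundary of the positive orthant, and \emph{not} on $r$---forces $\bx_0 + \bA^+(\bb' - \bb) > 0$ for every $\bb' \in U$, and hence places this point in $\inter \dom f_r$ by Lemma~\ref{lem:p_properties}(1). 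This gives the $r$-independent neighborhood required by the statement.

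Part (ii) is immediate from Lemma~\ref{lem:p_properties}(4): since $p'' > 0$ on $\inter \dom p$, the map $\bx \mapsto r \sum_{i=1}^m p(-x_i/r)$ is strictly convex on $\inter \dom f_r$, and adding the linear term $\langle \bc, \bx \rangle$ preserves this property, so any minimizer is necessarily unique.

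The main obstacle is part (iii). I would adapt the coercivity argument used to prove Proposition~\ref{existence and uniqueness} (which handles the case $\bb' = \bb$). LP duality supplies a lower bound on the cost uniformly in $\bx$: by Assumption~\ref{assumption on LP}(1) the LP is bounded, so the dual is feasible with some $\bm y$ satisfying $\bA^\top \bm y \leq \bc$, and hence $\langle \bc, \bx \rangle \geq \langle \bm y, \bb' \rangle$ whenever $\bx \geq \bz$ and $\bA \bx = \bb'$. The barrier behavior of $p$ from Lemma~\ref{lem:p_properties}(2) blocks components of $\bx$ from approaching $\partial \dom p$, while uniqueness of the LP optimum (Assumption~\ref{assumption on LP}(1)) rules out recession directions in $\ker \bA$ along which both the linear cost and the penalty term would remain asymptotically constant. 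Together these facts force the sub-level sets of $f_r$ on $\{\bx : \bA \bx = \bb'\}$ to be bounded, and the direct method of the calculus of variations extracts the minimizer. Because the dual lower bound $\langle \bm y, \bb' \rangle$ varies continuously in $\bb'$, coercivity is stable as $\bb'$ ranges over a sufficiently small $U$, and the existence argument of Proposition~\ref{existence and uniqueness} carries over essentially verbatim.
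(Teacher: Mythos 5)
Your feasibility and uniqueness steps match the paper's: the feasibility construction (Slater point $\bx_0 > 0$ plus a linear correction via a right inverse of $\bA$, with a neighborhood $U$ whose size is controlled by $\min_i x_{0,i}$ and is independent of $r$) is essentially the paper's argument, and uniqueness via strict convexity of $f_r$ is identical.

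For existence, however, you have mischaracterized the paper's method: the proof of Proposition~\ref{existence and uniqueness} that the paper invokes here is \emph{not} a coercivity argument. It proceeds via Fenchel duality, exhibiting the dual problem $\max_{\bm\lambda}\langle\bb,\bm\lambda\rangle - r\sum_i q(c_i - (\bA^\top\bm\lambda)_i)$ and observing that Slater's condition for this dual (existence of $\bm\lambda$ with $\bc - \bA^\top\bm\lambda > 0$, which follows from boundedness of the LP optimal face) guarantees a primal minimizer. Your proposed coercivity/direct-method alternative is a genuinely different route, and the core idea---that uniqueness of the LP optimum rules out recession directions of $f_r$ within $\ker\bA$---is sound and can be made rigorous via the recession function $f_r^\infty(\bm v) = \langle\bc,\bm v\rangle + \sum_i p^\infty(-v_i)$: a negative coordinate of $\bm v$ makes the penalty recession term $+\infty$, and a nonnegative nonzero $\bm v \in \ker\bA$ must satisfy $\langle\bc,\bm v\rangle > 0$ by uniqueness of $\bx^\star$. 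But one step in your sketch is a red herring: the LP-duality lower bound $\langle\bc,\bx\rangle \geq \langle\bm y, \bb'\rangle$ is asserted ``whenever $\bx \geq \bz$,'' yet the penalized program's feasible set is only $\{\bx : \bA\bx = \bb'\}$ \emph{without} the nonnegativity constraint---for penalties with $\dom p = \RR$ (e.g.\ the exponential), iterates can have negative coordinates, so this bound does not apply to the whole feasible set and contributes nothing to coercivity. The Fenchel-duality route the paper takes avoids this bookkeeping entirely, which is why it is the cleaner choice here.

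Finally, note the paper's own displayed bound $\|\bb' - \bb\| \leq \|\bA_0^{-1}\|\min_i x_{0,i}$ appears to have the operator norm on the wrong side (one wants $\|\bA_0^{-1}\|^{-1}$); your generic-right-inverse formulation sidesteps this typo.
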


To state our result, we need another auxiliary program.
Define $\bm \Sigma \in \RR^{m \times m}$ to be a diagonal matrix satisfying 
\begin{equation}
	\bm \Sigma_{ii} = p''(-{d}^\star_i)\mathbbm{1}_{\{i\in I_0\}}\,,
\end{equation}
where $\bm{d}^\star$ and $I_0$ are defined in~\eqref{equ for 1st order}.

Consider the quadratic program
\begin{equation}
\label{eq:mdef}
    \min_{\bm{x}\in\mathbb{R}^m} \bx^\top \bm \Sigma \bx, \text{ such that } \bm{Ax}=\bm{y},
\end{equation}
\begin{proposition}
\label{proposition: unique solution of control equation}
    There exists a unique solution to~\eqref{eq:mdef}, which is a linear function of $\bm{y}$, and can therefore be written $\bm{M}^\star\bm{y}$ for some $\bm{M}^\star\in \RR^{m \times k}$.

    Moreover, the matrix $\bm{M}^\star$ has the following properties:
    \begin{itemize}
        \item $\bm{M}^\star$ is a  right-inverse  of  $\bm{A}$, i.e., $\bm{A}\bm{M}^\star=I_k$. 
        \item $\ker(\bA) \subseteq \ker({\bm{M}^\star}^\top \bm \Sigma)$.
    \end{itemize}
    
\end{proposition}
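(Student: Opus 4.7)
The plan is to analyze~\eqref{eq:mdef} via its KKT conditions and to invoke the vertex structure of the LP for the rank property that delivers uniqueness. Since $\bA$ has full row rank (Assumption~\ref{assumption on LP}), $\bA \bx = \bm y$ is feasible for every $\bm y \in \Rset^k$, and its solution set is an affine subspace $\bx_0 + \ker(\bA)$. A PSD quadratic on an affine subspace always attains its minimum: any direction of non-strict convexity lies in $\ker(\bA) \cap \ker(\bm \Sigma)$, and restricting to a complement gives a coercive quadratic, so a minimum exists. Two minimizers can differ only by an element of $\ker(\bA) \cap \ker(\bm \Sigma)$, so uniqueness reduces to showing $\ker(\bA) \cap \ker(\bm \Sigma) = \{\bz\}$.

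Since $\bm \Sigma$ is diagonal with $\Sigma_{ii} > 0$ precisely for $i \in I_0$, the subspace $\ker(\bm \Sigma)$ consists of vectors supported on $B := [m] \setminus I_0 = \{i : x_i^\star > 0\}$, so the intersection is trivial iff the submatrix $\bA_B$ has full column rank. This is the main obstacle, and I would resolve it through classical LP structure: $\bx^\star$ is the unique LP optimum by Assumption~\ref{assumption on LP}, hence must be a vertex of the feasible polyhedron $\{\bA\bx = \bb,\, \bx \geq \bz\}$; otherwise $\bx^\star$ would be a midpoint of two distinct feasible points, which by linearity of $\langle \bc, \cdot \rangle$ would also be optimal, contradicting uniqueness. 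Vertices of this polyhedron are basic feasible solutions, so the columns of $\bA$ indexed by the support of $\bx^\star$ are linearly independent, i.e., $\bA_B$ has full column rank. This yields existence and uniqueness of the minimizer of~\eqref{eq:mdef}.

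Linearity in $\bm y$ then follows from the KKT system $2\bm\Sigma \bx = \bA^\top \bm \mu$, $\bA\bx = \bm y$: this is linear in $(\bx, \bm\mu)$, and uniqueness of the $\bx$-component means the map $\bm y \mapsto \bx$ is well-defined and linear; denote its matrix by $\bm M^\star \in \RR^{m \times k}$. Since $\bA \bx = \bm y$ holds for every $\bm y \in \Rset^k$, we obtain $\bA \bm M^\star = I_k$. For the second property, stationarity gives $\bm\Sigma \bm M^\star \bm y = \tfrac12 \bA^\top \bm\mu(\bm y) \in \mathrm{range}(\bA^\top) = \ker(\bA)^\perp$ for every $\bm y$; hence for any $\bv \in \ker(\bA)$, $\bv^\top \bm\Sigma \bm M^\star \bm y = 0$ for all $\bm y$, which yields ${\bm M^\star}^\top \bm\Sigma \bv = \bz$, i.e., $\ker(\bA) \subseteq \ker({\bm M^\star}^\top \bm\Sigma)$. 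The substantive step is the vertex/basic-feasible-solution argument, which bridges Assumption~\ref{assumption on LP} to the combinatorial rank statement about $\bA_B$ that underlies everything else.
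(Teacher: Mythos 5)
Your proof is correct and takes essentially the same route as the paper's. The only cosmetic difference is in how you justify the key combinatorial fact that $\bA_B$ has full column rank (equivalently, that $\ker(\bA)\cap\ker(\bm\Sigma)=\{\bz\}$): you re-derive it directly via the ``unique LP optimum $\Rightarrow$ vertex $\Rightarrow$ basic feasible solution'' chain, whereas the paper invokes its Lemma~\ref{lemma: I_0 is enough}, which is proven in the appendix from exactly the same classical fact; the rest of your argument (existence from the PSD quadratic over the affine subspace, linearity of $\bm y\mapsto\bx$ from the KKT system, and both listed properties from the stationarity condition $\bm\Sigma\bm M^\star\bm y\in\mathrm{range}(\bA^\top)$ together with $\bA\bm M^\star\bm y=\bm y$) matches the paper's reasoning step for step.
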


The following result is our main technical result: a rigorous version of the asymptotic expansion promised in~\eqref{eq:heuristic_asymptotic}.

\begin{theorem}
	\label{thm: trajectory with noise}
	Under Assumptions~\ref{assumption on LP} and~\ref{new assumptions on p}, if $r\beta(r)\ll\|\bb - \bb'\|\ll r$,  then
	\begin{equation}\label{eq:rig_asymptotic}
		\bm{x}({r,\bb'})=\bm{x}^\star+r\bm{d}^\star+\bm{M}^\star (\bb' - \bb)+O\left(\frac{\|\bb' - \bb\|^2}{r} + r\beta(r)\right), \quad \text{as $r \to 0$,}
	\end{equation}
	where $\beta(r)$ and $\bd^\star$ are as in Theorem~\ref{theorem: unique solution general penalty} and $\bm{M}^\star$ is defined as in Proposition~\ref{proposition: unique solution of control equation}.
\end{theorem}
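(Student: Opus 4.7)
The plan is to derive the expansion by analyzing the first-order (KKT) optimality conditions for both $\bx(r,\bb')$ and $\bx(r,\bb)$, subtracting them to obtain a perturbation system, and then identifying the leading-order dependence on $\bb'-\bb$ with the quadratic program characterizing $\bm{M}^\star$. The deterministic expansion from Theorem~\ref{theorem: unique solution general penalty} will handle the $r\bd^\star$ term, and careful Taylor control will give the stated remainder.

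Concretely, by Propositions~\ref{existence and uniqueness} and~\ref{prop:peturb_feasibility}, both $\bx(r,\bb)$ and $\bx(r,\bb')$ are uniquely defined for $\bb'$ close enough to $\bb$ and satisfy
\begin{equation*}
    c_i - p'\bigl(-x_i(r,\bb)/r\bigr) = (\bA^\top\bm{\lambda})_i,\qquad c_i - p'\bigl(-x_i(r,\bb')/r\bigr) = (\bA^\top\bm{\lambda}')_i,
\end{equation*}
together with $\bA\bx(r,\bb)=\bb$ and $\bA\bx(r,\bb')=\bb'$. Writing $\Delta\bx=\bx(r,\bb')-\bx(r,\bb)$, $\Delta\bm{\lambda}=\bm{\lambda}'-\bm{\lambda}$, and Taylor expanding $p'$ around $-x_i(r,\bb)/r$ produces the linearized system
\begin{equation*}
    (\bm{\Sigma}_r/r)\,\Delta\bx = \bA^\top \Delta\bm{\lambda} + \bm{R},\qquad \bA\Delta\bx = \bb'-\bb,
\end{equation*}
where $\bm{\Sigma}_r=\mathrm{diag}\bigl(p''(-x_i(r,\bb)/r)\bigr)$ and $R_i=O\bigl((\Delta x_i/r)^2\bigr)$. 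Dropping $\bm{R}$, this is precisely the KKT system for the quadratic program $\min_{\bx} \tfrac{1}{2}\bx^\top\bm{\Sigma}_r\bx$ subject to $\bA\bx=\bb'-\bb$, whose solution depends only on $\bm{\Sigma}_r$, not on the overall factor $1/r$.

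To identify the limit, I would invoke Theorem~\ref{theorem: unique solution general penalty} to write $x_i(r,\bb)=x_i^\star+rd_i^\star+O(r\beta(r))$. Thus for $i\in I_0$, $-x_i(r,\bb)/r\to -d_i^\star$ and $(\bm{\Sigma}_r)_{ii}\to p''(-d_i^\star)=\bm{\Sigma}_{ii}$, while for $i\notin I_0$, $-x_i(r,\bb)/r\to -\infty$ and the decay of $p'$ at $-\infty$ (Lemma~\ref{lem:p_properties} combined with convexity) forces $(\bm{\Sigma}_r)_{ii}\to 0=\bm{\Sigma}_{ii}$. Continuity of the quadratic program's solution in $\bm{\Sigma}$ (using Proposition~\ref{proposition: unique solution of control equation} to pin down the limit on the degenerate coordinates through $\ker(\bA)\subseteq\ker({\bm{M}^\star}^\top\bm{\Sigma})$) then identifies the leading linear term as $\bm{M}^\star(\bb'-\bb)$. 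Adding this to the deterministic expansion of $\bx(r,\bb)$ yields the claimed formula.

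The hard part is the remainder control. I would need to establish two things: first, a uniform a priori bound $\|\Delta\bx\|=O(\|\bb'-\bb\|)$ under the hypothesis $\|\bb'-\bb\|\ll r$, so that the Taylor term $\bm{R}$ feeds back into a self-consistent bound of order $\|\bb'-\bb\|^2/r$ after solving the linear system (the factor $r$ comes from multiplying the stationarity equation by $r$, turning $(\Delta x_i/r)^2$ into $(\Delta x_i)^2/r$); and second, a rate for the convergence $\bm{\Sigma}_r\to\bm{\Sigma}$ that transfers to a rate for the QP solutions and matches the $O(r\beta(r))$ contribution. The a priori bound is the main obstacle, since the limiting $\bm{\Sigma}$ is degenerate on $i\notin I_0$; the natural approach is to view $(\Delta\bx,\Delta\bm{\lambda})$ as the solution of a linear system obtained from the KKT equations and show its inverse is uniformly bounded in $r$, using that $\bA$ has full row rank (Assumption~\ref{assumption on LP}) together with the positivity of $p''$ on $I_0$ (Assumption~\ref{new assumptions on p}). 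Once these two estimates are in hand, the three error sources---the Taylor remainder, the $\bm{\Sigma}_r\to\bm{\Sigma}$ convergence, and the deterministic expansion error---assemble to $O(\|\bb'-\bb\|^2/r + r\beta(r))$, matching~\eqref{eq:rig_asymptotic}.
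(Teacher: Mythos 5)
Your approach is genuinely different from the paper's: you linearize the KKT stationarity conditions and try to match the resulting system with the quadratic program defining $\bm{M}^\star$, whereas the paper compares objective values $g_r$ and $g$, invokes the growth Lemma~\ref{lem:new_norm_bound}, and only ever Taylor-expands $\nabla g$ (whose nontrivial coordinates live on $I_0$, at the \emph{finite} base point $\bd^\star$). This difference matters, and it is where your argument has a real gap.

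The gap is that your plan needs control of $p''$ \emph{at $-\infty$}, and Assumption~\ref{new assumptions on p} only controls the decay of $p'$ there, not $p''$. Two separate steps require this. First, for $i\notin I_0$ your Taylor remainder $R_i=O\bigl((\Delta x_i/r)^2\bigr)$ with a uniform constant presupposes that $p''$ has a uniformly bounded (local) Lipschitz constant along base points $-x_i(r,\bb)/r\to-\infty$; local Lipschitzness on $\dom p$ gives no such uniformity. Second, and more seriously, you need $(\bm\Sigma_r)_{ii}=p''(-x_i(r,\bb)/r)\to 0$ for $i\notin I_0$, with a quantitative rate, so that the QP weights on $I_0^c$ vanish and the solution converges to $\bm{M}^\star(\bb'-\bb)$ at the rate $O(r\beta(r))$. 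This does not follow from $p'\to 0$ at $-\infty$: one can build $q$ satisfying Assumption~\ref{new assumptions on p} (e.g.\ $q''(y)=1+y^{-2}(\sin(1/y))_+^2$ near $0$, locally Lipschitz, strictly positive, with $q'\to-\infty$) for which $p''=1/q''\circ p'$ oscillates and $\limsup_{x\to-\infty}p''(x)>0$. For such a penalty your $\bm\Sigma_r$ does not converge to the degenerate $\bm\Sigma$, the QP with $\bm\Sigma_r$ minimizes over \emph{all} coordinates with $\Theta(1)$ weights on $I_0^c$, and its solution need not approach $\bm{M}^\star(\bb'-\bb)$ at all — yet the theorem remains true, because the paper's argument never touches $p''$ at $-\infty$. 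The paper gets the cross term with $\be^\star$ to vanish through the algebraic identity $\ker(\bA)\subseteq\ker({\bm{M}^\star}^\top\bm\Sigma)$ applied to $\nabla^2 g(\bd^\star)\be^\star$, rather than by stability of the QP under $\bm\Sigma_r\to\bm\Sigma$, and bounds the contribution from $I_0^c$ purely via $\nabla h_r=O(\beta(r))$, i.e.\ via $p'$ alone. To salvage the KKT-linearization route you would need to strengthen the hypotheses on $p''$ (or $q''$) at the boundary, which changes the assumptions.

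Two smaller remarks. The claim that ``the decay of $p'$ at $-\infty$ combined with convexity forces $(\bm\Sigma_r)_{ii}\to 0$'' is not valid, as the example above shows; integrability of $p''$ near $-\infty$ does not force pointwise decay. On the other hand, your a priori bound $\|\Delta\bx\|=O(\|\bb'-\bb\|)$ via uniform invertibility of the scaled KKT matrix
$\left(\begin{smallmatrix}\bm\Sigma_r & -\bA^\top \\ \bA & 0\end{smallmatrix}\right)$
is plausible when $\bm\Sigma_r$ is bounded and $\ker\bA\cap\ker\bm\Sigma=\{0\}$ (Lemma~\ref{lemma: I_0 is enough}), and is the right sort of estimate; that part of the plan is sound.
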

Comparing Theorem~\ref{thm: trajectory with noise} to Theorem~\ref{theorem: unique solution general penalty}, we see that $\bx(r, \bb')$ has the same first-order ``bias'' term $r \bd^\star$ as $\bx(r, \bb)$, and that the second order term is asymptotically linear in the perturbation $\bb' - \bb$.
In particular, if $\bb' - \bb$ is asymptotically Gaussian, then the fluctuations of $\bx(r, \bb')$ will have the same property.

Like Theorem~\ref{theorem: unique solution general penalty}, Theorem~\ref{thm: trajectory with noise} is based on analysis of~\eqref{eq:delta_char}.
If we write $g(\bm d) = \lim_{r \to 0} g_r(\bm d)$, for $g_r$ defined in~\eqref{eq:gr_def}, then the matrix $\bm \Sigma$ is the Hessian of $g$ at $\bm d^\star$.
It is therefore natural to conjecture that solutions to perturbations of the equation~\eqref{equ for 1st order} around $\bd^\star$ will be minimizers of a quadratic form involving $\bm \Sigma$.
The proof of Theorem~\ref{thm: trajectory with noise} establishes a quantitative form of this argument.

\begin{remark}\label{remark:size of r}
	To be explicit, Theorem~\ref{thm: trajectory with noise} holds in the asymptotic framework where $r \to 0$ and $\bb' \to \bb$ simultaneously, at a specified rate.
	The requirement that $\|\bb - \bb'\| \ll r$ is necessary for the expansion in~\eqref{eq:rig_asymptotic} to hold.
	Indeed, following the proof of Theorem~\ref{theorem: unique solution general penalty}, it can be shown that if $r\ll \|\bb - \bb'\|$, the solution to the penalized program can be written as 
	\begin{equation}
		\bm{x}(r,\bb')=\bm{x}^\star_{\bb'}+r\bm{d}^\star_{\bb'}+O\left(r\beta(r)\right),
	\end{equation}
	where $\bm{x}^\star_{\bb'}$ is the solution to the linear program with equality constraints $\bA \bx = \bb'$ and $\bm{d}^\star_{\bb'}$ is the solution to~\eqref{equ for 1st order} with $I_0$ replaced by the set of zero entries of $\bm{x}^\star_{\bb'}$. 
	Prior work~\citep{KlaMunZem22,LiuBunNil23} shows that $\bx^\star_{\bb'}$ is asymptotically non-linear in $\bb' - \bb$, so no expansion similar to~\eqref{eq:rig_asymptotic} holds in this regime.
\end{remark}

With Theorem~\ref{thm: trajectory with noise} in hand, we can obtain a distributional limit result for $ \bx(r_n, \bb_n)$ under the assumption that $\bb_n$ is random.
In light Remark~\ref{remark:size of r}, the following assumption is natural.
\begin{assumption}
	\label{assumptions on rates}
	The sequence $r_n$ is chosen to satisfy the relation $r_n\beta(r_n)\ll q_n^{-1}\ll r_n$, where $\beta(r)$ is the decay rate of $p'$ at $-\infty$.
\end{assumption}
Since $\bm{b}_n-\bm{b}\BN = \EN O_p(q_n^{-1})$  and Theorem~\ref{theorem: unique solution general penalty} shows that the effect of the penalty term is proportional to $r$, this assumption  is equivalent to requiring that the random fluctuations in $\bm{b}_n$ are smaller than the penalty  term but larger than the residual term, which has order $O(r\beta(r))$. 

Under this assumption, the following is a direct corollary of Theorem~\ref{thm: trajectory with noise}.
\begin{theorem}
\label{theorem: limit law}
Under Assumptions~\ref{assumption on LP},~\ref{new assumptions on p} and~\ref{assumptions on rates},
    \begin{equation}
        q_n({\bm{x}}(r_n,\bm{b}_n)-r_n\bm{d}^\star-\bm{x}^\star)\overset{d}\to\bm{M}^\star \bm G,
    \end{equation}
    where $\bd^\star $ and $\bm M^\star $ are as in Theorem~\ref{thm: trajectory with noise}.
\end{theorem}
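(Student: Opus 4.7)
The plan is to apply the deterministic asymptotic expansion of Theorem~\ref{thm: trajectory with noise} pathwise and then conclude via Slutsky's theorem and the continuous mapping theorem. Since Theorem~\ref{thm: trajectory with noise} is formulated for deterministic perturbations $\bb'$ satisfying the two-sided condition $r\beta(r) \ll \|\bb - \bb'\| \ll r$, the key step is to verify that, once $\bb'$ is replaced by the random $\bb_n$, this regime is attained with probability tending to one. Assumption~\ref{assumption:G} furnishes the lower bound $\|\bb_n - \bb\| \gtrsim q_n^{-1}$ with high probability, complementing the $O_p(q_n^{-1})$ upper bound that comes from $q_n(\bb_n - \bb) \overset{d}{\to} \bm G$.

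For each $\epsilon > 0$, I would choose $0 < \delta < \Delta < \infty$ such that $P(\delta < \|\bm G\| < \Delta) \geq 1 - \epsilon$, which is possible by Assumption~\ref{assumption:G}, and set $E_n := \{\delta < \|q_n(\bb_n - \bb)\| < \Delta\}$; the Portmanteau theorem gives $\liminf_n P(E_n) \geq 1 - \epsilon$. On $E_n$, the two-sided bound $\delta/q_n < \|\bb_n - \bb\| < \Delta/q_n$ together with Assumption~\ref{assumptions on rates} ($r_n\beta(r_n) \ll q_n^{-1} \ll r_n$) implies $r_n \beta(r_n)/\|\bb_n - \bb\| = o(1)$ and $\|\bb_n - \bb\|/r_n = o(1)$ uniformly over $E_n$, so for all sufficiently large $n$ the hypotheses of Theorem~\ref{thm: trajectory with noise} are met pathwise on $E_n$, yielding
\begin{equation*}
    \bx(r_n, \bb_n) = \bx^\star + r_n \bd^\star + \bm{M}^\star (\bb_n - \bb) + R_n, \qquad \|R_n\| \leq C\bigl(\|\bb_n - \bb\|^2/r_n + r_n \beta(r_n)\bigr),
\end{equation*}
where $C$ is the absolute constant implicit in the big-$O$ of Theorem~\ref{thm: trajectory with noise}.

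Multiplying through by $q_n$, I would rearrange to
\begin{equation*}
    q_n\bigl(\bx(r_n, \bb_n) - r_n \bd^\star - \bx^\star\bigr) = \bm{M}^\star\, q_n(\bb_n - \bb) + q_n R_n,
\end{equation*}
note that the first term converges in distribution to $\bm{M}^\star \bm G$ by the continuous mapping theorem, and bound the second on $E_n$ by
\begin{equation*}
    q_n \|R_n\| \leq C\bigl(\Delta^2/(q_n r_n) + q_n r_n \beta(r_n)\bigr) = o(1)
\end{equation*}
deterministically, again by Assumption~\ref{assumptions on rates}. Combined with $\liminf_n P(E_n) \geq 1 - \epsilon$ and the arbitrariness of $\epsilon$, this gives $q_n R_n \overset{p}{\to} 0$, and Slutsky's theorem delivers the conclusion. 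The only genuine obstacle is the probabilistic two-sided control of $\|\bb_n - \bb\|$ required to invoke Theorem~\ref{thm: trajectory with noise}; once Assumptions~\ref{assumption:G} and~\ref{assumptions on rates} are in hand, the rest is a bookkeeping exercise.
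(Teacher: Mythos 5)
Your proposal is correct and follows essentially the same route as the paper: restrict to a high-probability event on which $\|\bb_n - \bb\|$ is sandwiched in a range satisfying the hypotheses of Theorem~\ref{thm: trajectory with noise}, apply that deterministic expansion on the event, and conclude via Slutsky. The only cosmetic difference is the choice of truncation window — you use $(\delta/q_n, \Delta/q_n)$ with $\delta, \Delta$ fixed via the Portmanteau theorem, while the paper picks deterministic sequences $l_n, u_n$ with $r_n\beta(r_n) \ll l_n \ll q_n^{-1} \ll u_n \ll \sqrt{r_n q_n^{-1}}$; both verify the two-sided rate condition and kill the remainder after multiplying by $q_n$.
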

Note that the limit law in Theorem~\ref{theorem: limit law} involves the bias term $r_n \bd^\star$, which depends on the true linear program solution $\bm{x}^\star$. In Section~\ref{sec:confidence set}, we introduce a debiasing procedure which removes this additional term.
\section{Debiasing and distributional limits of random linear program solutions with penalties }
\label{sec:trajectory with bn}
\label{sec:confidence set}

In this section, we exploit the linearity of the leading-order bias term to construct an asymptotically unbiased estimator of $\bx^\star$.
Our main insight is that the linearity of the bias makes it possible to construct a linear combination of the solutions to~\eqref{ penalty problem with bn} with two different values of $r_n$ so that the term involving $\bd^\star$ exactly cancels.
Indeed, the following theorem is immediate from Theorem~\ref{theorem: limit law}.
\begin{theorem}
\label{theorem: debiasing}
    Let ${\bm{x}}(r_n,\bm{b}_n)$ and ${\bm{x}}(\frac{r_n}{2},\bm{b}_n)$ be the solution to the problem~\eqref{ penalty problem with bn} with penalties $r_n$ and $\frac{r_n}{2}$, respectively.


    Define $\hat {\bm{x}}_n$ as
    \begin{equation}\label{equ:extrapolation formulation}
        \hat {\bm{x}}_n=2{\bm{x}}(\frac{r_n}{2},\bm{b}_n)-{\bm{x}}(r_n,\bm{b}_n).
    \end{equation}

    Then under Assumptions~\ref{assumption on LP},~\ref{new assumptions on p} and~\ref{assumptions on rates},
   \begin{equation}
        q_n(\hat \bx _n -\bm{x}^\star)\overset{d}\to\bm{M}^\star \bm G\,,
    \end{equation}
    where $\bm M^\star$ is as in Theorem~\ref{thm: trajectory with noise}.
\end{theorem}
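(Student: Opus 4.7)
The statement is essentially a direct corollary of Theorem~\ref{thm: trajectory with noise}, so my plan is to apply that expansion twice and take a linear combination.

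First, I would invoke Theorem~\ref{thm: trajectory with noise} twice along the random sequence $\bb' = \bb_n$, once with penalty $r_n$ and once with penalty $r_n/2$. Assumption~\ref{assumptions on rates} gives $r_n\beta(r_n) \ll q_n^{-1} \ll r_n$, and since $\beta$ is continuous with $\beta(r_n/2) \asymp \beta(r_n)$ (and $r_n/2 \asymp r_n$), the rate condition also holds with $r_n/2$ in place of $r_n$. Combined with Assumption~\ref{assumption:G}, which forces $\|\bb_n - \bb\| = q_n^{-1}\|\bm G\|(1+o_p(1))$ with $\bm G \neq \bz$ a.s., the inequality $r\beta(r) \ll \|\bb_n - \bb\| \ll r$ holds with probability tending to $1$ for both choices of $r$. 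So with high probability the deterministic expansion applies pointwise, yielding
\begin{align*}
\bm{x}(r_n,\bm{b}_n) &= \bm{x}^\star + r_n\bm{d}^\star + \bm{M}^\star(\bb_n-\bb) + O_p\!\left(\tfrac{\|\bb_n-\bb\|^2}{r_n} + r_n\beta(r_n)\right), \\
\bm{x}(\tfrac{r_n}{2},\bm{b}_n) &= \bm{x}^\star + \tfrac{r_n}{2}\bm{d}^\star + \bm{M}^\star(\bb_n-\bb) + O_p\!\left(\tfrac{\|\bb_n-\bb\|^2}{r_n} + r_n\beta(r_n)\right).
\end{align*}

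Second, I would substitute these into the definition~\eqref{equ:extrapolation formulation} and observe that in the combination $2\bm{x}(\tfrac{r_n}{2},\bm{b}_n) - \bm{x}(r_n,\bm{b}_n)$, the constant terms give $2\bx^\star - \bx^\star = \bx^\star$, the linear-in-$r$ bias terms give $2\cdot\tfrac{r_n}{2}\bm{d}^\star - r_n\bm{d}^\star = \bz$ (this is the key algebraic cancellation enabled by the linearity in $r$ established in Theorem~\ref{theorem: unique solution general penalty}), and the linear-in-perturbation terms give $2\bm{M}^\star(\bb_n-\bb) - \bm{M}^\star(\bb_n-\bb) = \bm{M}^\star(\bb_n-\bb)$. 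Thus
\begin{equation*}
\hat{\bm{x}}_n - \bm{x}^\star = \bm{M}^\star(\bb_n-\bb) + O_p\!\left(\tfrac{\|\bb_n-\bb\|^2}{r_n} + r_n\beta(r_n)\right).
\end{equation*}

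Third, I would multiply by $q_n$ and check that the two error contributions vanish asymptotically. For the quadratic remainder, $q_n \|\bb_n-\bb\|^2/r_n = (q_n\|\bb_n-\bb\|) \cdot \|\bb_n-\bb\|/r_n = O_p(1)\cdot O_p(q_n^{-1}/r_n) = o_p(1)$ by the right half of Assumption~\ref{assumptions on rates}. For the penalty remainder, $q_n r_n\beta(r_n) = o(1)$ by the left half. Consequently $q_n(\hat{\bm{x}}_n - \bm{x}^\star) = q_n\bm{M}^\star(\bb_n - \bb) + o_p(1)$, and Slutsky's theorem together with~\eqref{converge bn} and continuity of $\bm y \mapsto \bm{M}^\star \bm y$ yields $q_n(\hat{\bm{x}}_n - \bm{x}^\star) \overset{d}\to \bm{M}^\star \bm G$.

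The only non-routine step is the careful justification that Theorem~\ref{thm: trajectory with noise}, which is stated deterministically under $r\beta(r) \ll \|\bb - \bb'\| \ll r$, can be applied along the random sequence $\bb_n$; this is where Assumption~\ref{assumption:G} (ensuring $\|\bb_n - \bb\|$ does not shrink faster than $q_n^{-1}$) does the real work, and one would typically handle it by restricting to the high-probability event where the rate sandwich holds and absorbing the complement into an $o_p(1)$ term.
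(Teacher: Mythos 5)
Your proposal is correct and is essentially the paper's own argument: both apply Theorem~\ref{thm: trajectory with noise} twice (with penalties $r_n$ and $r_n/2$) on a high-probability event where the rate sandwich $r\beta(r)\ll\|\bb_n-\bb\|\ll r$ holds, then use Assumption~\ref{assumptions on rates} to show the remainder is $o_p(q_n^{-1})$; the paper merely packages the same cancellation through the intermediate quantity $\hat{\bm{d}}_n=\frac{2}{r_n}(\bm{x}(r_n,\bm{b}_n)-\bm{x}(\tfrac{r_n}{2},\bm{b}_n))$ and the identity $\hat{\bm{x}}_n=\bm{x}(r_n,\bm{b}_n)-r_n\hat{\bm{d}}_n$, which is an immaterial reorganization. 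One small imprecision: Assumption~\ref{assumption:G} gives that $q_n\|\bb_n-\bb\|$ is stochastically bounded away from $0$ and $\infty$, not literally $\|\bb_n-\bb\|=q_n^{-1}\|\bm G\|(1+o_p(1))$, but this is exactly what your high-probability-event argument actually uses, so the proof stands.
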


In addition to being asymptotically unbiased, the estimator $\hat \bx_n$ sometimes gives rise to estimates of the \textit{objective value} that converge strictly faster than the plug-in estimate.
The following result develops this phenomenon in the particular context of optimal transport problems when $\bm{s} = \bm{t}$.
(See Corollary~\ref{corollary: fast convergence of cost} in the appendix for the extension to general linear programs.)
\begin{corollary}[Special case of Corollary~\ref{corollary: fast convergence of cost}]\label{corollary: OT fast convergence}
	Consider the discrete optimal transport problem~\eqref{eq:basic_ot_problem}.
	Suppose that $\bm{s}=\bm{t}$ , $c_{i,i}=0$, $c_{i,j}>0$ for $i\neq j$, and  $\bm{c}=\bm{c}^\top$.
	Let $\bm s_n$ and $\bm t_n$ be such that $\sqrt n (\bm s_n - \bm s)$ and $\sqrt n (\bm t_n - \bm t)$ converge to non-degenerate random variables.
	Let $\hat \pi_n$ be the debiased estimator constructed as in Theorem~\ref{theorem: debiasing} with the exponential penalty function $p(x) = \exp(x)$.

Then
    \begin{equation}
        \sqrt{n}(\langle\bm{c},\hat{\bm{\pi}}_n\rangle-\langle\bm{c},{\bm{\pi}^\star}\rangle)\overset{d}\to 0.
    \end{equation} 
\end{corollary}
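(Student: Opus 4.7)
My plan is to reduce the claim, via Theorem~\ref{theorem: debiasing} and the continuous mapping theorem, to the deterministic identity $\bc^\top \bm{M}^\star \bm{y} = 0$ for every $\bm{y}$ in the support of $\bm{G}$, and then verify this identity by computing $\bd^\star$ and $\bm{M}^\star$ explicitly using the symmetries of the problem. By Theorem~\ref{theorem: debiasing} applied with $q_n = \sqrt n$ and the continuous mapping theorem,
\begin{equation*}
    \sqrt n(\langle \bc, \hat\pi_n\rangle - \langle \bc, \pi^\star\rangle) \overset{d}\to \langle \bc, \bm{M}^\star \bm{G}\rangle,
\end{equation*}
and since $\bc$ and $\bm{M}^\star$ are deterministic, the right-hand side is distributionally degenerate at $0$ as soon as $\langle \bc, \bm{M}^\star \bm{y}\rangle = 0$ for every $\bm{y}$ in the support of $\bm{G}$.

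To compute $\bd^\star$, I observe that under the stated assumptions the unique optimal plan is $\pi^\star = \operatorname{diag}(\bm{t})$, so $I_0$ is precisely the set of off-diagonal indices. With $p(x) = \exp(x)$, the program~\eqref{equ for 1st order} reads
\begin{equation*}
    \min_{\bd}\ \sum_{i\ne j} c_{ij}\, d_{ij} + \sum_{i\ne j} e^{-d_{ij}} \quad \text{s.t.}\ \sum_j d_{ij} = 0,\ \sum_i d_{ij} = 0,
\end{equation*}
where I have used $c_{ii} = 0$ to drop the diagonal from the linear part. The diagonal entries $d_{ii}$ therefore do not appear in the objective and are pinned down by the row-sum constraints; eliminating them turns the column-sum constraints into $\sum_{i\ne j}(d_{ij} - d_{ji}) = 0$ for all $j$. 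The resulting reduced problem in the off-diagonal entries is separable and strictly convex, and its unconstrained minimum $d^\star_{ij} = -\log c_{ij}$ ($i\ne j$) automatically satisfies these constraints because $c_{ij} = c_{ji}$ makes $d^\star_{ij} - d^\star_{ji} = 0$. Hence this is the unique $\bd^\star$, and the matrix $\bm{\Sigma}$ from Proposition~\ref{proposition: unique solution of control equation} is diagonal with entries $\bm{\Sigma}_{(i,j),(i,j)} = p''(-d^\star_{ij})\,\mathbbm{1}\{i\ne j\} = c_{ij}\,\mathbbm{1}\{i\ne j\}$.

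Next I solve the QP~\eqref{eq:mdef} defining $\bm{M}^\star\bm{y}$. Since $\bm{\Sigma}$ vanishes on the diagonal of the matrix variable, the coordinates $x_{ii}$ are unpenalized and can be eliminated via the row-sum constraints. Writing $\bm{y} = (\bm{y}_t, \bm{y}_s)$ for the row/column targets, what remains is a strictly convex QP in the off-diagonal entries subject to $\sum_{i\ne j}(x_{ij} - x_{ji}) = y_{s,j} - y_{t,j}$. Introducing Lagrange multipliers $\mu_j$ for these constraints, the KKT conditions give $x^\star_{ij} = (\mu_j - \mu_i)/(2 c_{ij})$ for $i\ne j$, and then, using $c_{ii} = 0$ once more,
\begin{equation*}
    \langle \bc, \bm{M}^\star \bm{y}\rangle \;=\; \sum_{i\ne j} c_{ij}\cdot \frac{\mu_j - \mu_i}{2 c_{ij}} \;=\; \tfrac12 \sum_{i\ne j}(\mu_j - \mu_i) \;=\; 0,
\end{equation*}
the last equality by antisymmetry (each $\mu_k$ appears $p-1$ times with each sign). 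This completes the reduction.

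The main subtlety I anticipate is justifying the reduction to an off-diagonal subproblem in each step: in both the $\bd^\star$ and the $\bm{M}^\star$ computations the diagonal coordinates are unpenalized by the objective but pinned down by the linear constraints, and, crucially, contribute nothing to $\langle \bc, \cdot\rangle$ because $c_{ii} = 0$. Once this reduction is justified, the strict convexity coming from $c_{ij} > 0$ for $i\ne j$ makes the reduced problems uniquely solvable and the KKT derivation above rigorous; the final cancellation of $\langle \bc, \bm{M}^\star \bm{y}\rangle$ is then a direct antisymmetry computation.
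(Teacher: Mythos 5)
Your proof is correct, and it takes a genuinely different route from the paper's. The paper proves a general statement for linear programs (Corollary~\ref{corollary: fast convergence of cost}) whose hypotheses are phrased in terms of support stability of the solution under specially chosen perturbations $\bb + \epsilon \bA\bm{\rho}$; the key mechanism there is abstract: one identifies $\bd^\star$ via the first-order conditions (showing that the vector $\bm{x}_{\bm\rho_0}$ with $(\bm{x}_{\bm\rho_0})_{I_0} = -\log \bc_{I_0}$ lies in $\ker \bA$ and satisfies optimality), then uses the structural property $\ker(\bA)\subseteq\ker({\bm M^\star}^\top\bm\Sigma)$ from Proposition~\ref{proposition: unique solution of control equation}, tested against a second kernel vector $\bm{x}_{\bm\rho_1}$ with $(\bm{x}_{\bm\rho_1})_{I_0}$ constant, to conclude $\langle\bc,\bm M^\star\bm G\rangle=0$ without ever solving the QP. The proof of Corollary~\ref{corollary: OT fast convergence} then just checks that the symmetry of $\bc$ gives the required support stability. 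You instead exploit the explicit OT structure throughout: you compute $\bd^\star_{ij}=-\log c_{ij}$ off-diagonal directly, read off $\bm\Sigma_{(ij),(ij)}=c_{ij}\mathbbm{1}\{i\neq j\}$, eliminate the unpenalized diagonal coordinates via the row constraints, solve the reduced strictly convex QP by KKT to get $x^\star_{ij}=(\mu_j-\mu_i)/(2c_{ij})$, and finish with the antisymmetry cancellation $\sum_{i\neq j}(\mu_j-\mu_i)=0$. Your argument is self-contained and more transparent for the OT case (it gives closed forms for $\bd^\star$, $\bm\Sigma$, and the multipliers), while the paper's route scales to arbitrary linear programs satisfying the stated support-stability condition. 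One small point you should make explicit: the constraint matrix $\bA$ for OT must have one redundant row dropped to satisfy Assumption~\ref{assumption on LP}; in your KKT derivation this amounts to fixing one multiplier (say $\mu_p=0$), which does not affect the antisymmetry cancellation. You should also note, as you anticipated, that the off-diagonal reduction is justified precisely because the diagonal coordinates carry zero cost ($c_{ii}=0$) and zero curvature ($\bm\Sigma_{(ii),(ii)}=0$), so they neither contribute to $\langle\bc,\cdot\rangle$ nor to the objective of either auxiliary program.
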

 
The fast convergence identified in Corollary~\ref{corollary: OT fast convergence} occurs at the ``null'' when the two marginal distributions are equal to each other.
The Sinkhorn divergence~\citep{pmlr-v84-genevay18a}, a loss connected to entropic penalization, is known to evince a similar phenomenon~\citep[Theorem 2.7]{10.1214/19-EJS1637}.

\section{Bootstrap}\label{sec: bootstrap}
As emphasized above, the fact that the unique solution $\bx^\star$ to~\eqref{primal} is not a smooth function of $\bb$ (when viewing $\bA$ and $\bc$ as fixed) is the source of the asymptotic bias in the plug-in estimator $\bx_n$ obtained by solving~\eqref{primal} with $\bb$ replaced by its empirical counterpart $\bbn$.
More precisely, \cite{KlaMunZem22} show that $\bx^\star$ is in general only \textit{directionally} Hadamard differentiable with respect to $\bb$.
An implication of this fact is that the na\"ive  bootstrap is not consistent~\citep{Dum93,FanSan18} and does not lead to asymptotically valid inference for $\bx$ when applied with the plug-in estimator.

However, we show in this section that our debiased estimator from Theorem~\ref{theorem: debiasing} avoids this problem and can be bootstrapped without issue.
The resulting procedure is a practical and fully data driven method for asymptotic inference on $\bx$.

To be more precise, we consider the setting where the random vector $\bbn$ is an average of $n$ i.i.d.\ vectors $Z_1, \dots, Z_n$ with expectation $\bb$.
In particular, this is the case for the discrete optimal transport problem.
We denote by $\tilde \bb_n$ a bootstrap copy of $\bbn$ obtained by resampling with replacement from $\{Z_1, \dots, Z_n\}$.
Finally, we form the estimator $\hat \bx_n$ as in Theorem~\ref{theorem: debiasing} from $\bbn$, and use the same procedure to obtain a bootstrap version $\tilde \bx_n$ computed with $\tilde \bb_n$ in place of $\bbn$.
The following theorem shows that $\tilde \bx_n$ is consistent for inference on $\bx^\star$.
%
\begin{theorem}
\label{theorem: bootstrap}
Under the same assumptions as in Theorem~\ref{theorem: limit law}, let $\hat \bx_n$ and  $\tilde \bx_n$ be the estimator defined in Theorem~\ref{theorem: debiasing} with vectors $\bbn$ and $\tilde \bb_n$ respectively.
Then
\begin{equation}
\label{equation: theorem bootstrap}
     \sup_{h \in BL_1(\mathbb{R}^k)} \left| \mathbb{E}\left[h\left(\sqrt{n}(\tilde \bx_n - \hat \bx_n)\right) | Z_1, \ldots, Z_n\right] - \mathbb{E}\left[h\left(\sqrt{n}(\hat \bx_n-\bm{x}^\star )\right)\right] \right|\overset{p}\to 0\,,
\end{equation}
   where $BL_1((\mathbb{R}^k))$ denotes the set of bounded Lipschitz functions on $\RR^k$, whose $L^\infty$ norm and Lipschitz constant are both bounded by $1$.
\end{theorem}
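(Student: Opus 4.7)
The plan is to reduce the bootstrap consistency claim to the classical empirical bootstrap CLT for the sample mean by exploiting the asymptotic linearization of the debiased estimator provided by Theorem~\ref{thm: trajectory with noise}. Since $\bbn$ is the mean of $n$ i.i.d.\ bounded vectors $Z_1, \dots, Z_n$, the classical bootstrap CLT for empirical means (e.g., van der Vaart and Wellner, Theorem 3.6.1) gives
\[
\sup_{h \in BL_1(\RR^k)} \bigl| \E[h(\sqrt n (\tilde \bb_n - \bbn)) \mid Z_1, \dots, Z_n] - \E[h(\sqrt n (\bbn - \bb))] \bigr| \overset{p}\to 0.
\]
Everything else reduces to showing that both $\sqrt n(\hat \bx_n - \bx^\star)$ and $\sqrt n(\tilde \bx_n - \hat \bx_n)$ are asymptotically equivalent to $\bm{M}^\star \sqrt n (\bbn - \bb)$ and $\bm{M}^\star \sqrt n (\tilde \bb_n - \bbn)$, respectively.

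To carry this out, I would first combine Theorem~\ref{thm: trajectory with noise}, applied at both $r_n$ and $r_n/2$, with the debiasing identity~\eqref{equ:extrapolation formulation}. Because the leading bias term $r\bd^\star$ is linear in the regularization level, it cancels exactly in the combination $2\bx(r_n/2, \bbn) - \bx(r_n, \bbn)$, leaving
\[
\sqrt n (\hat \bx_n - \bx^\star) = \bm{M}^\star \sqrt n (\bbn - \bb) + o_p(1),
\]
where the residual is $\sqrt n\, O_p(n^{-1}/r_n + r_n \beta(r_n)) = o_p(1)$ under Assumption~\ref{assumptions on rates}. Applying the same expansion to the perturbation $\bb' = \tilde \bb_n$, which satisfies $\|\tilde \bb_n - \bb\| = O_p(n^{-1/2})$ by the triangle inequality and the bootstrap CLT, yields $\sqrt n (\tilde \bx_n - \bx^\star) = \bm{M}^\star \sqrt n (\tilde \bb_n - \bb) + o_p(1)$, so that
\[
\sqrt n (\tilde \bx_n - \hat \bx_n) = \bm{M}^\star \sqrt n (\tilde \bb_n - \bbn) + o_p(1).
\]
Combining with the bootstrap CLT above, and using that $\bm y \mapsto h(\bm{M}^\star \bm y)$ is bounded and Lipschitz whenever $h \in BL_1(\RR^m)$, finishes the argument: both $\sqrt n (\hat \bx_n - \bx^\star)$ unconditionally and $\sqrt n (\tilde \bx_n - \hat \bx_n)$ conditionally (in probability) converge in distribution to $\bm{M}^\star \bm G$.

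The main obstacle is the second application of Theorem~\ref{thm: trajectory with noise} at the random perturbation $\bb' = \tilde \bb_n$: that theorem is deterministic and assumes $r_n \beta(r_n) \ll \|\bb - \bb'\| \ll r_n$. The upper bound holds with probability tending to one from $\|\tilde \bb_n - \bb\| = O_p(n^{-1/2})$ and Assumption~\ref{assumptions on rates}. The lower bound is more delicate, since the bootstrap fluctuation may occasionally collapse; on the vanishing-probability event where $\|\tilde \bb_n - \bb\| \lesssim r_n \beta(r_n)$, I would instead apply Theorem~\ref{theorem: unique solution general penalty} separately at $\bb$ and $\tilde \bb_n$, which bounds $\bx(r_n, \tilde \bb_n) - \bx(r_n, \bb)$ by $O(r_n \beta(r_n)) = o(n^{-1/2})$ directly, so the linear expansion still holds to the needed accuracy (with the linear term absorbed into $o_p(1)$). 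A final technical step is converting the unconditional $o_p(1)$ remainders into the conditional-on-data statement~\eqref{equation: theorem bootstrap}; this is standard and follows from Markov's inequality applied to $\E[|R_n| \wedge 1 \mid Z_1, \dots, Z_n]$ together with Fubini.
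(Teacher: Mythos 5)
Your proposal is correct and follows essentially the same route as the paper: linearize $\hat \bx_n$ and $\tilde \bx_n$ via the debiasing expansion, reduce the bootstrap consistency to that of the sample mean $\bbn$ (classical bootstrap CLT), and convert the unconditional $o_p(1)$ remainder to the conditional statement with a Markov-type bound. The only difference is that you re-derive the linearization of $\tilde \bx_n$ from the deterministic Theorem~\ref{thm: trajectory with noise} and argue the small-fluctuation event separately, whereas the paper simply observes that $\tilde \bb_n$ satisfies the hypotheses of Theorems~\ref{theorem: limit law}--\ref{theorem: debiasing} (which already handle the indicator splitting internally); both are fine.
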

As is well known, such conditional weak convergence results lead to consistency for quantile estimators and confidence sets obtained from $\tilde \bx_n$.
For example, the following result is a direct application of~\cite[Lemma 23.3]{Vaa98} showing that bootstrap estimates readily yield valid confidence intervals for arbitrary coordinates of the optimal solution.
For any $i \in [m]$ and $t \in (0, 1)$, let $\widehat F_{n, i}^{-1}(t)$ be the corresponding quantile of $\sqrt n ((\tilde \bx_n)_i - (\hat \bx_n)_i)$ given $Z_1, \dots, Z_n$.
\begin{corollary}\label{cor:bootstrap_quantiles}
	Under the same assumptions as Theorem~\ref{theorem: bootstrap}, for any $\alpha \in (0, 1)$,
	\begin{equation}
        \liminf_{n\rightarrow\infty} \mathbb{P}\left((\hat \bx_n)_i - \frac{\widehat F_{n, i}^{-1}(1 - \alpha/2)}{\sqrt n}\leq \bx^\star_i \leq (\hat \bx_n)_i - \frac{\widehat F_{n, i}^{-1}(\alpha/2)}{\sqrt n}\right)= 1-\alpha\,.
	\end{equation}
\end{corollary}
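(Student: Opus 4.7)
The plan is to derive the corollary as a coordinate-wise specialization of Theorem~\ref{theorem: bootstrap}, combined with the limit law from Theorem~\ref{theorem: debiasing}. First I would note that the projection onto the $i$-th coordinate $\pi_i:\RR^m\to\RR$ is $1$-Lipschitz, so for every $h\in BL_1(\RR)$, the composition $h\circ\pi_i$ lies in $BL_1(\RR^m)$. Applying Theorem~\ref{theorem: bootstrap} to these compositions yields the univariate conditional convergence
\begin{equation*}
	\sup_{h\in BL_1(\RR)} \left| \E[h(\sqrt n((\tilde\bx_n)_i-(\hat\bx_n)_i))\mid Z_1,\ldots,Z_n] - \E[h(\sqrt n((\hat\bx_n)_i-\bx^\star_i))]\right| \overset{p}\to 0\,.
\end{equation*}
By Theorem~\ref{theorem: debiasing}, $\sqrt n((\hat\bx_n)_i-\bx^\star_i)\overset{d}\to (\bm M^\star \bm G)_i$, a centered Gaussian with variance $\sigma_i^2=(\bm M^\star\,\mathrm{Cov}(\bm G)\,{\bm M^\star}^\top)_{ii}$.

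Second, assuming the generic case $\sigma_i^2>0$, the distribution function $F$ of this limit is continuous and strictly increasing on $\RR$, so all its quantiles are continuity points of $F^{-1}$. The conditional weak convergence above, combined with a standard argument (e.g., Lemma~23.3 of~\cite{Vaa98}) for converting such convergence into convergence of sample quantiles, then gives $\widehat F_{n,i}^{-1}(t) \overset{p}\to F^{-1}(t)$ for every $t\in(0,1)$, and in particular for $t=\alpha/2$ and $t=1-\alpha/2$.

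Finally, I would rewrite the coverage event by algebraic inversion as
\begin{equation*}
	\widehat F_{n,i}^{-1}(\alpha/2) \leq \sqrt n ((\hat\bx_n)_i-\bx^\star_i) \leq \widehat F_{n,i}^{-1}(1-\alpha/2)\,,
\end{equation*}
and combine Slutsky's theorem with the weak convergence of $\sqrt n((\hat\bx_n)_i-\bx^\star_i)$ to conclude that the coverage probability tends to $F(F^{-1}(1-\alpha/2))-F(F^{-1}(\alpha/2))=1-\alpha$, where the continuity of $F$ at its quantiles is what justifies passing the probability through the limit. The only subtlety I anticipate is the degenerate case $\sigma_i^2=0$, which arises when the $i$-th row of $\bm M^\star$ is orthogonal to the support of $\bm G$; there the bootstrap quantiles collapse to zero in probability and coverage becomes trivially $1$, so the stated equality must be read as ``$\geq$'' in that edge case. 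Away from this structurally exceptional situation, the argument above delivers exact asymptotic coverage as claimed.
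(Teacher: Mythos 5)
Your proposal is correct and matches the paper's approach: the paper proves this corollary simply by invoking van der Vaart's Lemma 23.3, and your argument is essentially an unpacking of that lemma. The coordinate-projection step (composing $h\in BL_1(\RR)$ with the $1$-Lipschitz projection to reduce Theorem~\ref{theorem: bootstrap} to a univariate statement) is the right way to bring the theorem's bounded-Lipschitz metric into the form Lemma 23.3 needs, and the Slutsky step correctly handles the joint fluctuation of the estimator and the data-dependent bootstrap quantiles.

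One point worth emphasizing: your observation about the degenerate case $\sigma_i^2 = 0$ is a genuine and legitimate caveat that the paper leaves implicit. Assumption~\ref{assumption:G} rules out $\bm G \equiv \bm 0$, but it does not by itself guarantee that $(\bm M^\star\bm G)_i$ is non-degenerate for every fixed coordinate $i$; if the $i$-th row of $\bm M^\star$ lies in the null space of $\mathrm{Cov}(\bm G)$, the limit law has an atom at zero, the quantile function is discontinuous, and Lemma 23.3 no longer applies directly. In that case the stated equality should indeed be weakened to an inequality (or the coordinate excluded). Your reading of the corollary as implicitly restricted to coordinates with $\sigma_i^2 > 0$ is the correct one, and you were right to flag it.
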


\section{Simulation results for the optimal transport problem}\label{sec:discussion}
A central application of our work is to providing asymptotically unbiased (and asymptotically Gaussian) estimators for the optimal plan in the discrete optimal transport problem.
Motivated by this application, we perform simulations to assess the performance of our estimator on optimal transport problems, mimicking an experimental setup for the entropic penalty due to~\cite{KlaTamMun20}.

\subsection{The $2\times 2$ OT problem revisited}
We consider a $2$-by-$2$ optimal transport problem:
\begin{equation}
   w^\star =\min_{\pi \in \mathbb{R}^{2 \times 2}} \pi_{12} + 2\pi_{21}\,, \qquad  \textrm{s.t.}\  {\pi} \bm{1} =\bm{t}, {\pi}^\top \bm{1} = \bm{s}, \pi \geq \bm{0}.
\end{equation}
with $\bm{t}=\bm{s}=\left [\frac{1}{2},\frac{1}{2}\right]^\top$ and a unique solution ${\pi}^\star=\begin{bmatrix}
    \frac{1}{2} & 0 \\
    0 & \frac{1}{2}\\
\end{bmatrix}$, and in the  statistical setting where $\bm t$ and $\bm s$ are only available via empirical frequencies $\bm{t}_n$ and $\bm{s}_n$.
This is the same setting as in~\eqref{equ: 2by2 example}, though we have used an asymmetric cost to avoid the degenerate limiting situation described in Corollary~\ref{corollary: OT fast convergence}.
It is easy to check that the asymptotic characterization of the plug-in estimator given in Proposition~\ref{prop:2by2solution} holds for this asymmetric cost as well.
In particular, the limiting distribution of $\pi_n$ is non-Gaussian.

Our approach in this paper suggests solving a penalized optimal transport problem:
\begin{equation}
   \pi((\bm{t}_n, \bm{s}_n), r_n)= \argmin_{\pi \in \mathbb{R}^{2 \times 2}} \pi_{12} + 2\pi_{21}+r_n\sum_{i,j=1}^{2} p(-\frac{\pi_{i,j}}{r_n}) \,, \qquad  \textrm{s.t.}\  {\pi} \bm{1} =\bm{t}_n, {\pi}^\top \bm{1} = \bm{s}_n,
\end{equation}
and using $\hat{{\pi}}_{n}=2\pi((\bm{t}_n, \bm{s}_n), r_n/2) - \pi((\bm{t}_n, \bm{s}_n), r_n)$ instead of ${\pi}_n$  as an estimator for ${\pi}^\star$.

We first consider the case of the log penalty $p(x) = - \log(-x)$.
Since $p'$ decays at the rate $\beta(r) = r$ at $-\infty$, Theorems~\ref{thm: trajectory with noise} and~\ref{theorem: debiasing} imply that as long as the sequence $r_n$ satisfies $r_n^2 \ll n^{-1/2} \ll r_n$, then
\begin{equation}
	\label{equ: toy example theory}
	\hat{{\pi}}_{n}={\pi}^\star+\bm{M}^\star(\bm{t}_n,\bm{s}_n)+O_p(\max(r_n^2,\frac{1}{nr_n}))\,.
\end{equation}
The error term is minimized when $r_n$ is of the order $n^{-1/3}$, so we choose $r_n=\frac{r_0}{\sqrt[3]{n}}$ for a positive constant $r_0$, and we write $\hat{{\pi}}_{n, r_0}$ to emphasize that the estimator depends on the choice of $r_0$.

By Theorem~\ref{theorem: debiasing}, the estimator of the transportation cost $\hat w_{n, r_0} := \langle \bm{c},\hat \pi_{n, r_0} \rangle$ satisfies
\begin{equation}
    \sqrt{n}(\hat w_{n, r_0} - w^\star)\overset{d}\to \langle \bm{c},\bm{M}^\star (\bm{G}_t, \bm{G}_s)\rangle := \bm{G}_{w},
\end{equation}
where $w^\star = \langle \bc, \pi^\star \rangle$ and $\sqrt n((\bm{t}_n, \bm{s}_n) - (\bm t, \bm s)) \overset{d}{\to} (\bm{G}_t, \bm{G}_s)$.
Using~\eqref{eq:model} and the definition of $\bm{M}^\star$ in~\eqref{eq:mdef}, one can show that $\bm{G}_w \sim\mathcal{N}(0, \frac{1}{8})$.

We conduct experiments with varying $n$ and varying $r_0$ and compute the rescaled cost
\begin{equation}
\label{equ: rescaled cost}
    \Delta w_{n,r_0}\coloneqq\sqrt{n/\text{var}(\bm{G}_{w})}(\hat{w}_{n, r_0}-w^\star)
\end{equation}
for $1000$ independent trails.
Our theoretical results indicate that $\hat w_{n, r_0}$ converges to $w^\star$ and that
$\Delta w_{n, r_0}$ converges in distribution to a standard Gaussian.
To assess this convergence, we plot both the mean squared error $\|\hat w_{n, r_0} - w^\star\|^2$ and the Kolmogorov--Smirnov distance between the distribution of $\Delta w_{n, r_0}$ and the standard Gaussian (Figure~\ref{fig: toy example result}\subref{subfig: log-pen-density }, top two rows).
The simulation reveals that the distributional behavior of the estimator is sensitive to the choice of $r_0$, with substantial deviations from Gaussianity when $r_0$ is too small or too large.
This phenomenon can be understood by recalling the condition $r_n^2 \ll n^{-1/2} \ll r_n$, or, equivalently, $n^{-1/2} \ll r_n \ll n^{-1/4}$.
When $n$ is small, the window of valid penalization parameters is relatively narrow, implying that $r_0$ should be chosen with caution.

By contrast, with the exponential penalty $p(x) = \exp(x)$, since $\beta(r)$ in this case can be any polynomial in $r$, the range of admissible values of $r_n$ is much wider.
Performing the same experiment with the exponential penalty (Figure~\ref{fig: toy example result}\subref{subfig: exp-pen-density }, top two rows) shows that it is much less sensitive to the choice of $r_0$, with good distributional convergence as soon as $r_0$ is large enough.

In the bottom two rows, we fix $r_0 = 1$ and assess the qualitative convergence of $\Delta w_{n, r_0}$ to a standard Gaussian for small and large sample sizes.
Overall, performance of the exponential penalty function appears more reliable.

\begin{figure}
    \centering
    \begin{subfigure}{0.33\textwidth}
    
        \includegraphics[width=\linewidth]{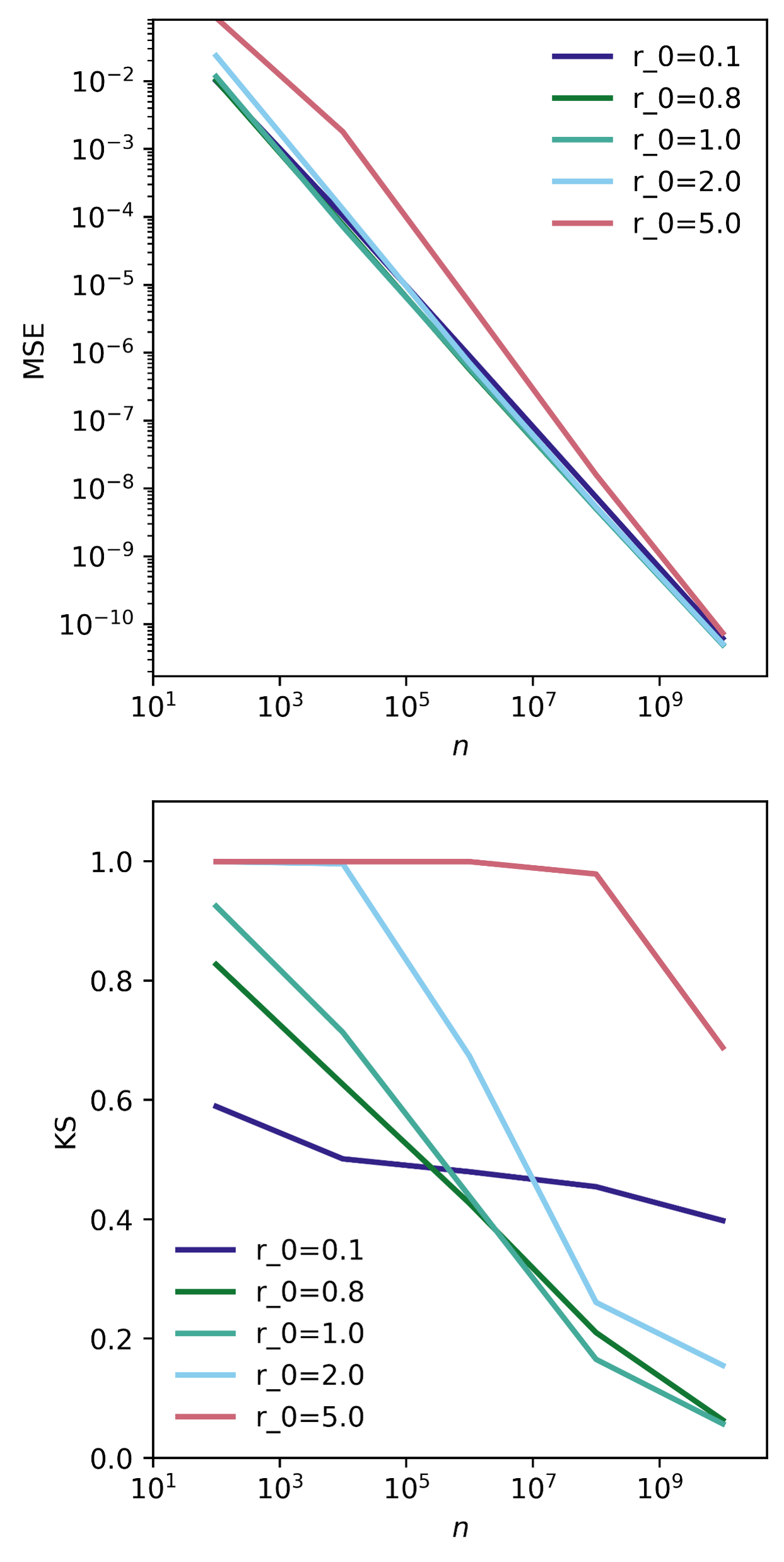}
    \end{subfigure}
    \begin{subfigure}{0.33\textwidth}
        \includegraphics[width=\linewidth]{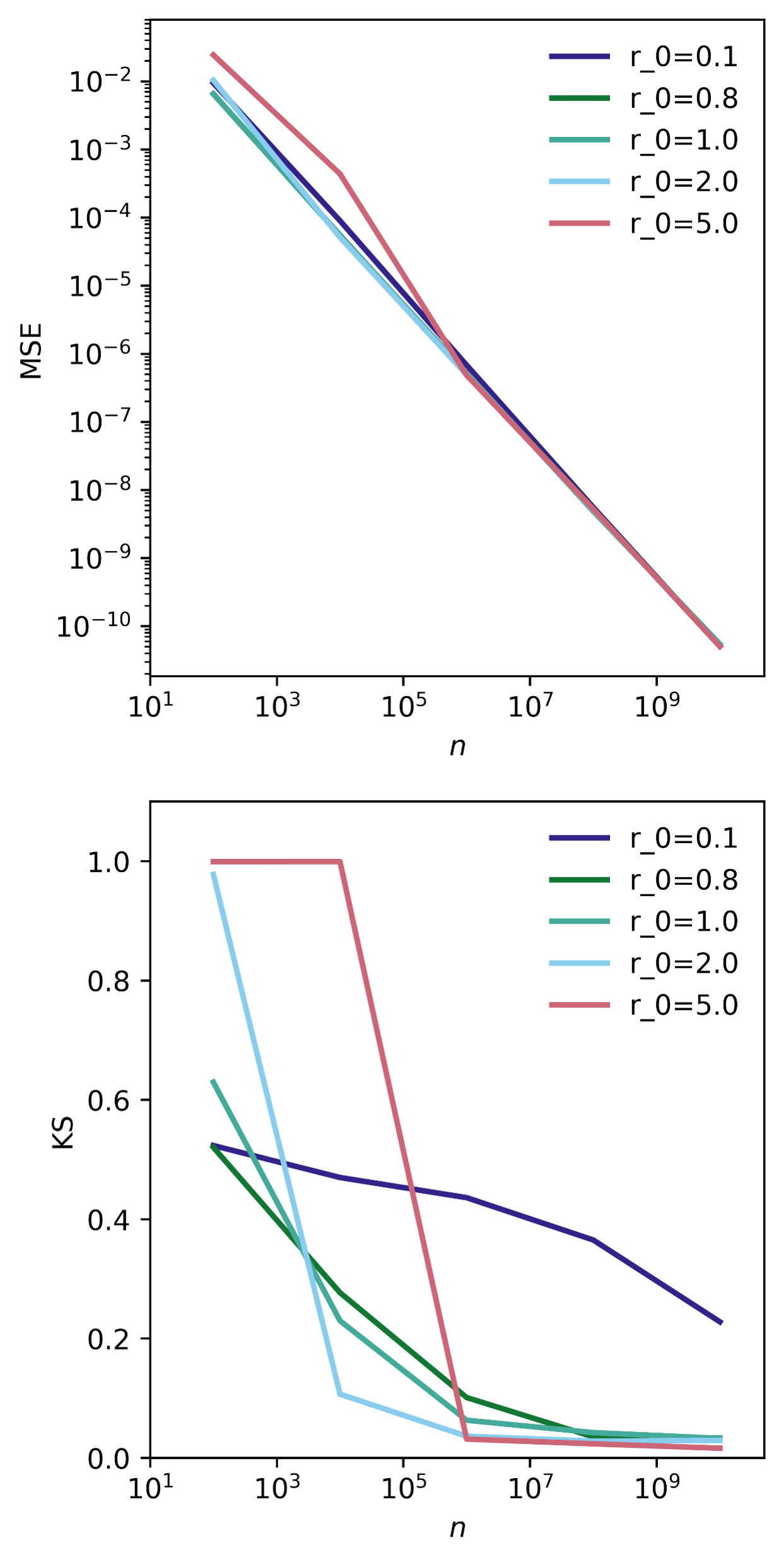}
    \end{subfigure}
    
    \begin{subfigure}{0.45\textwidth}
        \includegraphics[width=\linewidth]{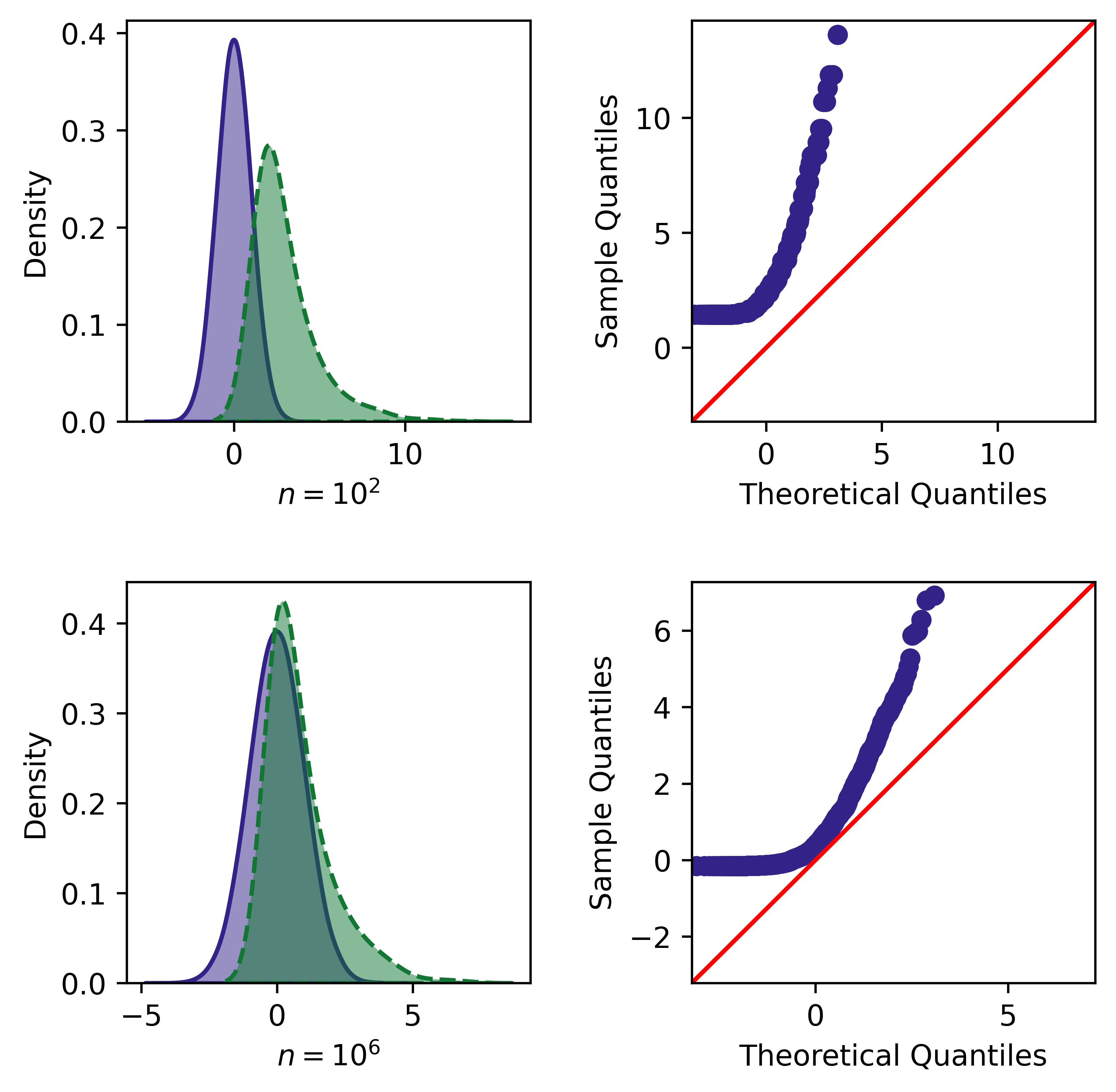}
        \caption{log penalty function }
        \label{subfig: log-pen-density }
    \end{subfigure}
    \begin{subfigure}{0.45\textwidth}
        \includegraphics[width=\linewidth]{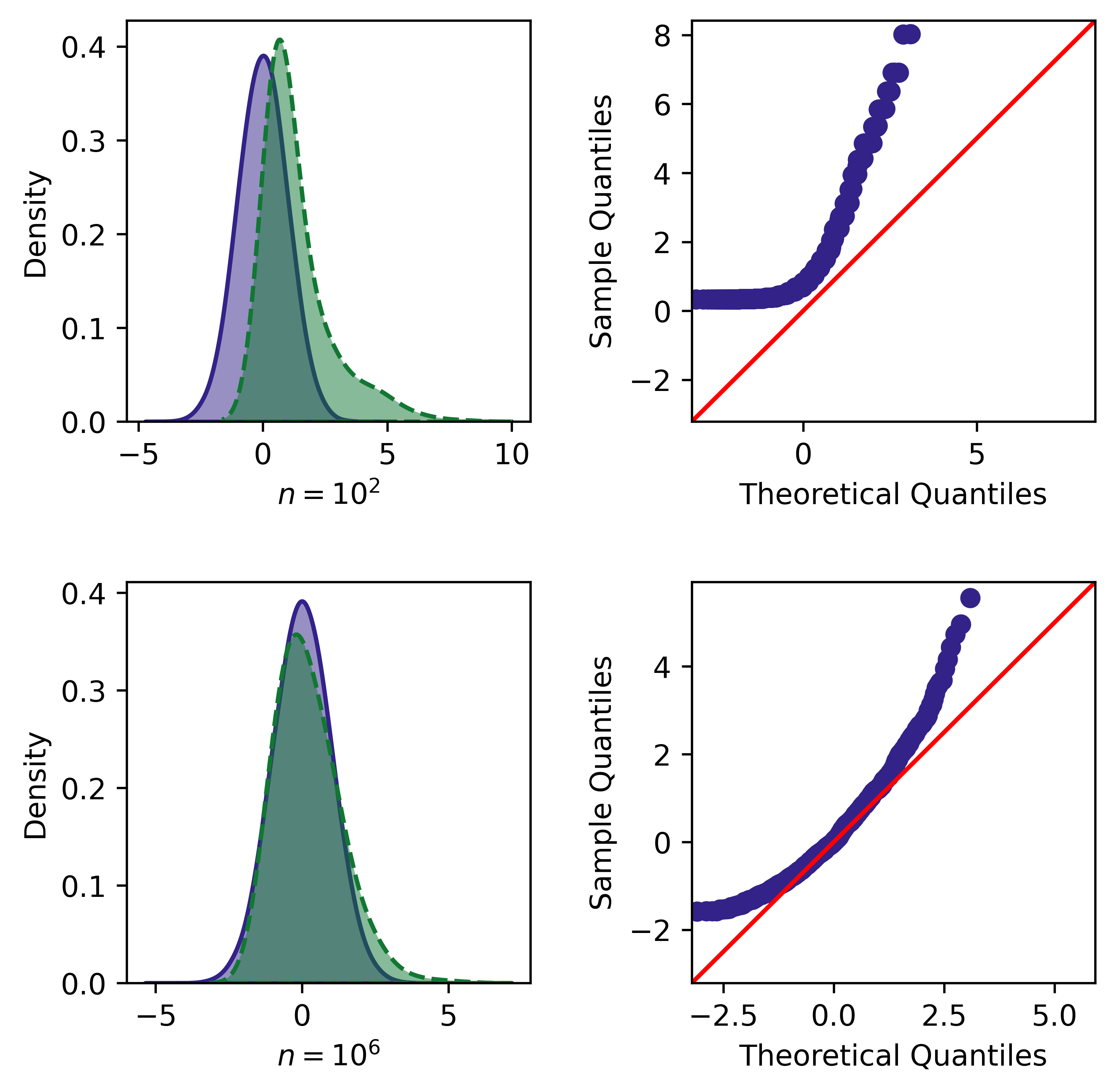}
        \caption{exponential penalty function }
        \label{subfig: exp-pen-density }
    \end{subfigure}
    \caption{\textbf{
    		Comparison between the log and exponential penalty functions.} Subfigures (a), (b) show the same experiment conducted with the logarithmic and exponential penalties, respectively, with 1000 independent replicates in each case. The first row shows plots of the mean-square error  $\mathbb{E}\|\hat \pi_{n, r_0} - \pi^\star\|^2$ as the sample size $n$ and regularization parameter $r_0$ vary. The second shows the Kolmogorov--Smirnov distance between $\Delta w_{n, r_0}$ and the standard Gaussian in the same setting. 
    		In the last two rows, we fix $r_0=1$ and consider a small sample size $n=10^2$ and a large sample size $n=10^6$. The dashed line is a kernel density estimate of the density of $\Delta w_{n, r_0}$   and the solid line is the standard Gaussian density function as a reference. On the right of each density function plot is  the corresponding Q--Q plot with a $45$-degree reference line in red.}
    \label{fig: toy example result}
\end{figure}

\subsection{Large scale simulation}
To illustrate the finite sample behavior of our estimator in high-dimensional settings, we employ our method on the same example as in \cite[Section 5]{KlaTamMun20}. Specifically, we examine an optimal transportation problem between two measures supported on an $L \times L$ equispaced grid over $[0,1]\times[0,1]$, where the transportation cost is proportional to the Euclidean distance between grid points.
We take $L =10$ so that the marginal distributions each have support size $10^2$ and the optimal plan is an element of $\RR^{10^2 \times 10^2}$.

For our first simulation, we draw $\bm{t},\bm{s}$ independently and uniformly from the simplex $\Delta_{L \times L}$, then generate empirical distributions $\bm{t}_n$ and $\bm{s}_n$ using  $n$ i.i.d. samples from the two measures, and then compute our estimator of the optimal plan $\hat \pi_{n, r_0}$ and estimator of the cost $\hat w_{n, r_0}$ using the exponential penalty function with coefficient $r_n=\frac{r_0}{L^4n^{1/3}}$, for $r_0$ a positive constant.

Our theoretical results indicate that the rescaled cost $\Delta w_{n, r_0}$ defined in~\eqref{equ: rescaled cost} converges to a standard Gaussian.
We took $2000$ independent replicates with various samples sizes and choices of $r_0$.
The results are plotted in Figure~\ref{fig:large simulation}.

\begin{figure}
    \centering
    \includegraphics[width=0.9\linewidth]{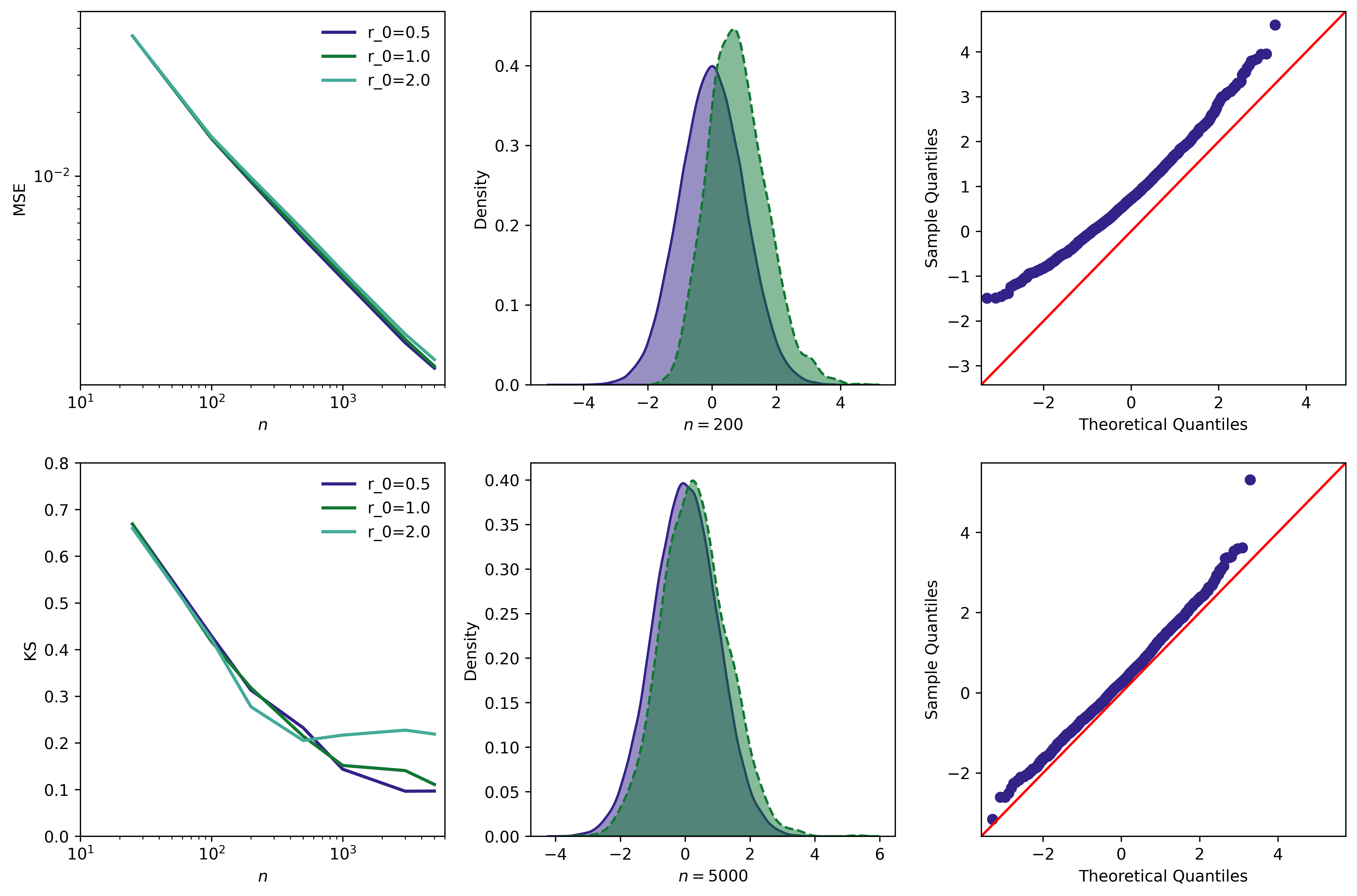}
    \caption{\textbf{Large scale simulation:} Experiments of optimal transportation between two independent Dirichlet distributions on equi-spaced $10\times10$ grid ($2000$ independent replicas in each case). With varies quantities of regularization parameter $r_0$ and sample size $n$, the left two plots shows the Mean Squared Error of the estimated plan ($\mathbb{E}\|\hat \pi_{n, r_0} - \pi^\star\|^2$), and the Kolmogorov–-Smirnov distance between $\Delta w_{n, r_0}$ and the standard Gaussian. The middle two plots show the density comparison between the standard Gaussian (solid lines) and the kernel density estimation of  $\Delta w_{n, r_0}$ (dashed lines) with $r_0=1$ and $n=200$,$5000$ respectively, facilitated with the Q--Q plot on their right.}
    \label{fig:large simulation}
\end{figure}

We also consider the case where $\bm{t} = \bm{s}$.
As discussed in Corollary~\ref{corollary: OT fast convergence}, with the exponential penalty function, we anticipate that $ \sqrt{n}(\hat{w}_{n,r_0}-w^\star)$ will converge in probability to $0$.
Since higher-order asymptotics in this case lie beyond the scope of this paper, we simply visualize the speed of convergence. In this experiment, we take $r_n=\frac{r_0}{L^4n^{1/4}}$ with $r_0=1$. 
We took $2000$ independent replicas of the empirical cost $\hat{w}_{n,r_0}$ and plot a kernel density estimate of $ \sqrt{n}(\hat{w}_{n,r_0}-w^\star)$ when $n = 5000$ and $n=10^9$.
We also plot the Mean Squared Error of $\hat{w}_{n,r_0}$ as a function of $n$ on a log--log plot and estimate the rate of decay via linear regression. The results are plotted in Figure \ref{fig:degenerate}. The experiment shows that $\mathbb{E}(\hat{w}_{n,1} - w^\star)^2=o(n^{-1})$ which are consistent with the finding that $\hat{w}_{n,r_0} - w^\star = o_p(n^{-1/2})$. 

 \begin{figure}
    \centering
    \includegraphics[width=0.9\linewidth]{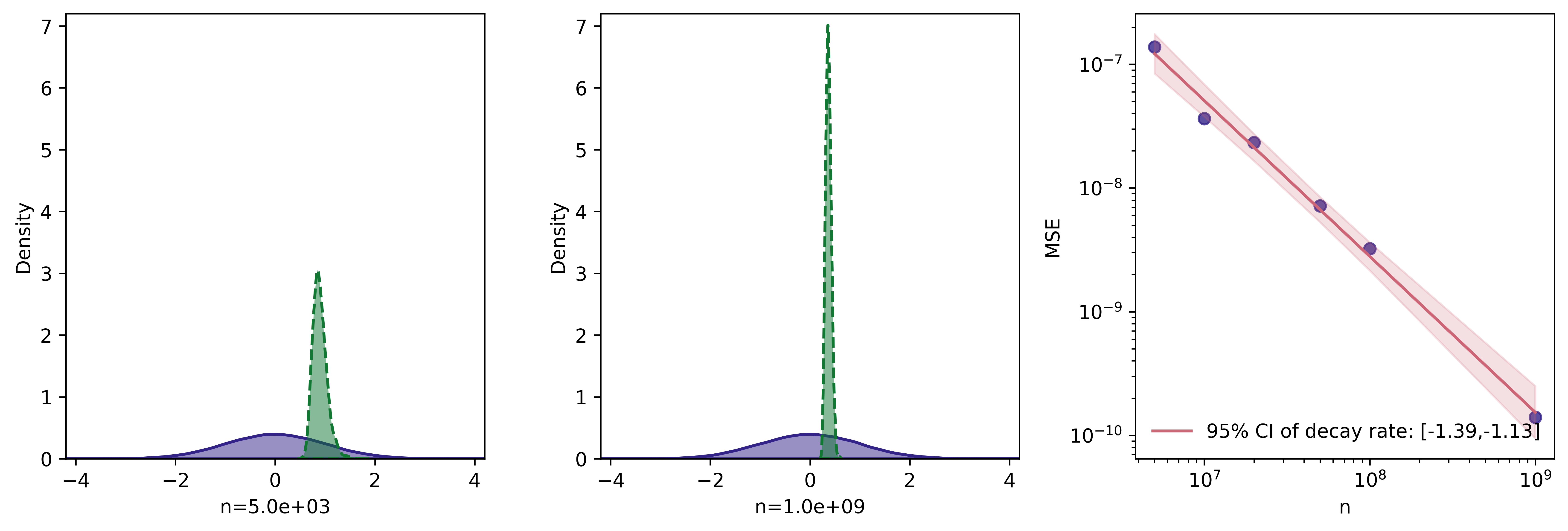}
    \caption{\textbf{Fast convergence with exponential penalty function in the $\bm{t}=\bm{s}$ case:} The left  two plots show the density comparison between the standard Gaussian (solid lines) and  the kernel density estimation of $\sqrt{n}(\hat{w}_{n,1} - w^\star) $ (dashed lines)  with $n=5000$ and $n=10^9$ respectively. The right log--log plot shows the rate of decay of $\mathbb{E}(\hat{w}_{n,1} - w^\star)^2$ with respect to $n$ in the large $n$ regime.  }
    \label{fig:degenerate}
\end{figure}

\section{Numerical Examples}\label{sec:numerical}
In this section, we apply our method to two real-data examples.
The first is an analysis of the bike reallocation problem for the Citi-Bike program in New York City.
In the second, we revisit the protein colocalization analysis of~\cite{KlaTamMun20} and illustrate the benefits of our method in comparison to entropic regularization.
In both examples, we adopt the log-penalty function due to its computational advantages: (1) it is widely supported by convex optimization solvers, and (2) it offers better numerical stability compared to the exponential penalty function.

\subsection{Citi-Bike reallocation problem}

Citi-Bike is a bike-sharing program in New York City which allows residents and tourists to rent bikes from various docking stations across the city.  The Citi-Bike program faces a persistent challenge with bike imbalance, where some docking stations experience high demand while others have surplus bikes. This imbalance, common in busy urban areas, means that in peak locations, riders may struggle to find available bikes or empty docks for returns. To alleviate this problem, Citi-Bike implements daily rebalancing by transporting bikes from stations with a daily surplus to stations with daily deficits. Effective rebalancing improves user satisfaction and increases the system's overall efficiency, making the bike-sharing service more reliable and convenient for daily commuters and tourists alike.

We formulate the bike rebalancing problem as a linear program, and use open-sourced daily trip data provided by Citi-Bike~\citep{citibike_tripdata}  to estimate the average daily need for rebalancing across the Citi-Bike system.
We take daily trip data for $n = 84$ weekdays between June and September 2023 in Manhattan, the New York City borough where Citi-Bike primarily  operates.
We record the net bike flow each day for all $N = 678$ stations in a vector $\bm{d}^{(i)}\in \mathbb{Z}^N$, for $i = 1, \dots, n$.
A positive entry in $\bm{d}^{(i)}$ means that there were more bikes docking at that station than bikes being taken away from the station during the  $i$th day, and vice versa.

We assume that the vectors $\bm{d}^{(i)}$ are i.i.d.\ samples of an underlying random variable, so that the central limit theorem gives $\sqrt{n}(\bar{\bm{d}}_n-\bm{d})\overset{D}\to \mathcal{N}(\bm{0},\bm{\Sigma})$ for some deterministic mean $\bm{d}$ and covariance $\bm{\Sigma}$, where $\bar{\bm{d}}_n = \tfrac 1n \sum_{i=1}^n \bm{d}^{(i)}$.
The problem of rebalancing the bikes among stations while reducing the total cost of rebalancing can be formulated as a linear program:
\begin{equation}
    \bm{\pi}^\star=\argmin_{\bm{\pi}\in\mathbb{R}^{N\times N}} \langle \bm{c},\bm{\pi}\rangle,\quad \text{s.t.} \ \pi_{i,j}\geq 0, \ \sum_{j=1}^{N} \pi_{i,j}-\pi_{j,i}  = {d}_i,
\end{equation}
where $\pi_{i,j}$ is the number of bikes that are transported from station $i$ to station $j$ under a bike-imbalance vector $\bm{d}$ and where $c_{i, j}$ denotes the cost of transporting bikes from station $i$ to station $j$.
Understanding the optimal rebalancing strategy $\bm{\pi}^\star$ can provide the company with a strategy for hiring and assigning workers and trucks. 

We use Theorem~\ref{theorem: debiasing} with the log-penalty function to obtain an estimator $\hat {\bm{\pi}}_n$ of the optimal rebalancing strategy $\bm{\pi}^\star$, and we use $B = 100$ bootstrap samples to generate entrywise  $95\%$ confidence intervals as in Corollary~\ref{cor:bootstrap_quantiles}.
We plot our results on a map of Manhattan in Figure~\ref{fig: citi-bike}.
Purple lines on the map indicate pairs of stations for which the corresponding entry of $\hat {\bm{\pi}}_n$ is at least $1$ and for which the corresponding confidence interval excludes zero.

Our results recover observed patterns in bike rebalancing obtained from the monthly operational report provided by Citi-Bike.
Citi-Bike provides incentives for bike rebalancing at certain locations, which are revealed by our estimator.
We also found significant need for rebalancing near Central Park, especially around Columbus Circle,  a major transportation hub where tourists usually get off from subways to visit this famous park. 

\begin{figure}
    \centering
    \includegraphics[width=\linewidth]{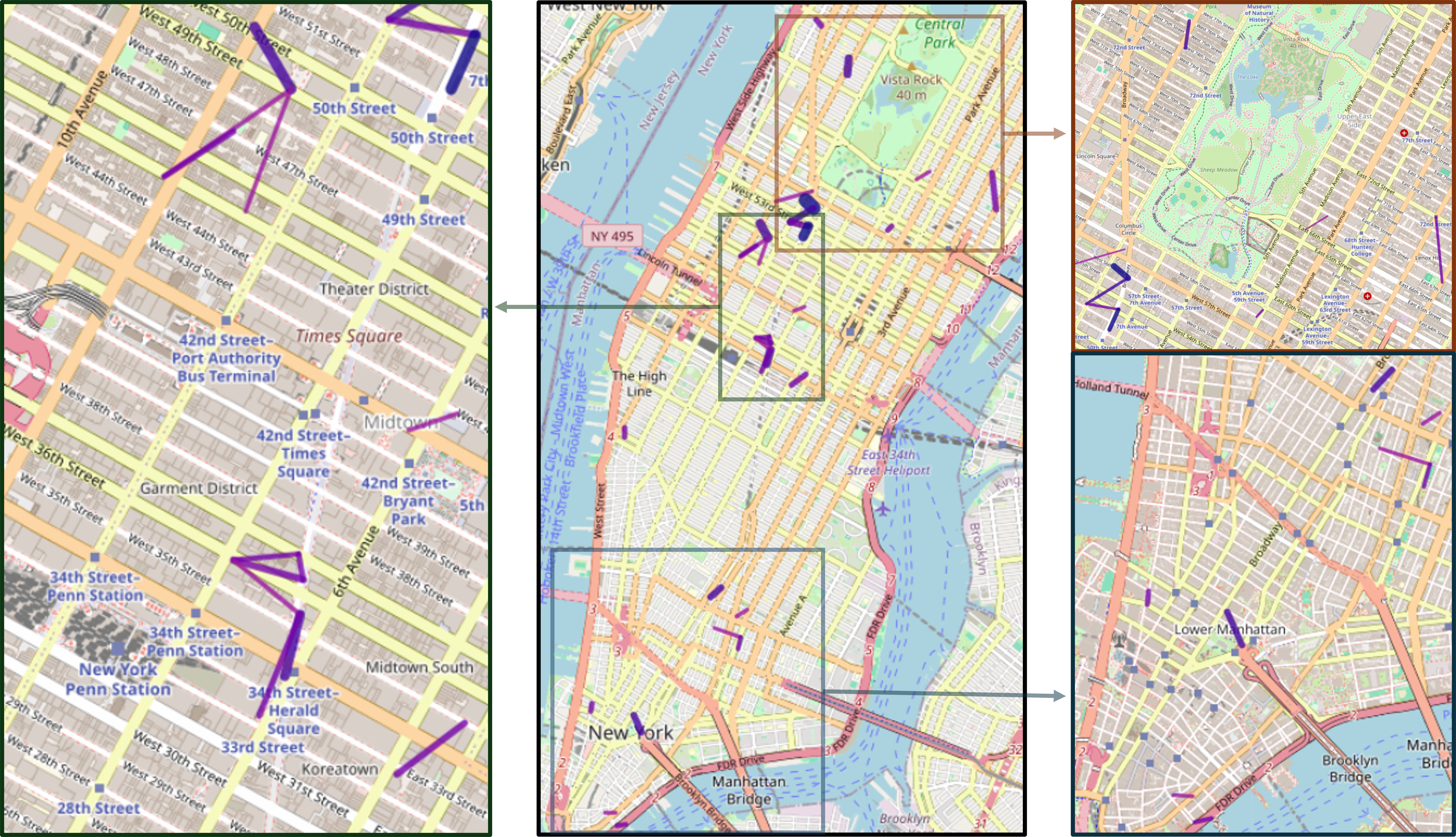}
    \caption{Empirical bike transportation plan among all stations in Manhattan. The estimated quantities of bikes to be transported between station pairs are shown in purple lines. The two ends of a line are the locations of two stations. A thicker and darker line indicates that more bikes need to be transported between the two stations.  Middle: overall plan. Left: Details of plan near Penn Station and Port Authority Bus Terminal.  Upper right: Details of plan around Central Park. Lower right:  Details of plan around Lower Manhattan and East Village. }
    \label{fig: citi-bike}
\end{figure}

\subsection{Colocalization Analysis}
In this section, we compare the performance of our estimator with that of the entropic penalization approach~\eqref{eq:reg_ot_problem} on the same data analyzed by~\cite{KlaTamMun20}, a protein colocalization study consisting of fluorescence microscopy images of cells.
In this data set, pixel intensities reflect the location and amount of proteins in each image.
Analyzing the spatial correlations of different proteins gives insight into cellular behaviors.

\cite{KlaTamMun20} introduce a colocalization measure to quantify the spatial correlation of two proteins. The normalized gray-scale fluorescence microscopy images can be viewed as discrete probability distributions with support on the equidistant grid of pixels indicating concentration of certain proteins. Let $\bm{t}$ and $\bm{s}$ be the distributions of two different proteins. The optimal transport plan between these  two distributions with cost proportional to the Euclidean distance between pixels is denoted by { $ \pi^\star$.} The colocalization measure, as a function of distance $\xi$, quantifies the fraction of proteins which are ``colocalized''---i.e., which are transported only a short distance under the optimal plan. Specifically, the colocalization measure is defined as {
\begin{equation}
     \mathrm{Col}^\star := \mathrm{Col}({\pi}^{\star}) (\xi) = \sum_{i,j=1}^{N} \pi^{\star}_{ij} \mathbbm{1}\{c_{ij} \leq \xi\},
 \end{equation}}
where $N=N_xN_y$ is the number of total pixels on the $(N_y\times N_x)$-size images and  $\bm{c}\in \mathbb{R}^{N\times N}$ stores the Euclidean distance between the pair of pixels on the $(N_y\times N_x)$ grid.
 More generally, for any joint distribution  $\pi := \pi (\bm u, \bm v)$ with marginals $\bm u, \bm v$, we have the colocalization measure for the plan $\bm{\pi}(\bm{u}, \bm{v})$ as
 \begin{equation*}
     \mathrm{Col}(\bm{\pi}(\bm{u}, \bm{v}))(\xi) := \sum_{i,j=1}^{N} \pi_{ij} \mathbbm{1}\{c_{ij} \leq \xi\}.
 \end{equation*}

Computing the colocalization measure directly is computationally hard in practice. For example, even $128\times 128$-pixel images will generate a transportation plan with $2^{28}$ variables. To reduce the storage requirement and to save computational time, \cite{KlaTamMun20} propose subsampling from $\bm{t}$ and $\bm{s}$ to obtain empirical measures $\bm{t}_n$ and $\bm{s}_n$ with $n< N$,  and using these to estimate the true colocalization measure. Although both empirical measures lie in the probability simplex in $\RR^N$, the sizes of their supports are at most $n$ after the subsampling procedure. Due to the nonnegativity constraint on the transportation plan, mass can only be transported between the supports of the two measures. This reduces the computation of the optimal plan to the space of $\RR ^{ | \text{supp} (\bm{t}_n)|\times |\text{supp}(\bm{s}_n )| }$.







{

\cite{KlaTamMun20} use the entropic regularization discussed in Section \ref{sec: entropic} to compute the empirical regularized plan $ { \pi}_{\lambda, n} =: {\pi}_{\lambda,n}(\bm{t}_n, \bm{s}_n)$, which they show enjoys a central limit theorem centered at the population-level regularized plan $\pi^{\star}_{\lambda} : = \pi_\lambda({\bm t, \bm s})$. Consequently, their proposed  estimator 
$\widehat{\mathrm{RCol}}_{n,n} := \mathrm{Col}(\bm{\pi}_{\lambda,n})(\xi)$ enjoys a central limit theorem when centered at 
$\mathrm{{RCol^{\star}_{\lambda}}}:=\mathrm{Col}({\pi}_{\lambda}^\star)(\xi)$.

}



However, they do not compare the regularized colocalization measure $\mathrm{{RCol^{\star}_{\lambda}}} $ with the true colocalization measure $\mathrm{{Col}^{\star}}$ to assess the effect of the bias introduced by entropic regularization.
We apply the method developed in this paper to compute the debiased estimator $\hat{\bm{\pi}}_n$ using the log-penalty function, and subsequently obtain the corresponding colocalization estimator. To distinguish our approach from the entropic regularization method proposed by \cite{KlaTamMun20}, we refer to it as the penalization method and denote the resulting colocalization estimator as $\widehat{\mathrm{PCol}}_{n,n} := \mathrm{Col}(\hat{\bm{\pi}}_n)(\xi)$. In both methods, empirical transport plans are restricted to the positive orthant, effectively reducing the optimization domain to $\RR^{|\text{supp}(\bm{t}_n)| \times |\text{supp}(\bm{s}_n)|}$.

For a $128 \times 128$-pixel image, we can compute (at considerable computational cost) the ``population-level'' colocalization $\mathrm{Col}$.
We can therefore compare the estimators $\widehat{\mathrm{PCol}}_{n,n}$ and $\widehat{\mathrm{RCol}}_{n,n}$ with $\mathrm{Col}^\star$. 

To make a fair comparison, we use the same  2-colour-STED images of ATP Synthase and MIC60 from \cite{tameling2021simluated} (shown in figure~\ref{fig: STED image}) and apply the same parameter settings ($n = 2000$ samples with regularization parameter $\lambda = 2$) as \cite{KlaTamMun20}.
We compute $\widehat{\mathrm{PCol}}_{n,n}$ and $\widehat{\mathrm{RCol}}_{n,n}$ as well as $95\%$ confidence bands for each estimator with $B = 100$ bootstrap replicates.
\begin{figure}
\centering
    \begin{subfigure}[c]{0.3\textwidth}
        \centering
        \includegraphics[width=\textwidth]{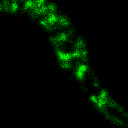}
    \end{subfigure}
    \begin{subfigure}[c]{0.3\textwidth}
        \centering
        \includegraphics[width=\textwidth]{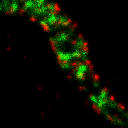}
    \end{subfigure}
     \begin{subfigure}[c]{0.3\textwidth}
        \centering
        \includegraphics[width=\textwidth]{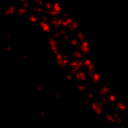}
    \end{subfigure}
    \caption{Zoomed in $128\times 128$ pixels of STED images. Pixel size=$15nm$. Left: ATP Synthase in green channel; Right: MIC60 in red channel; Middle: overlay of the green and the red channel.}
    \label{fig: STED image}
\end{figure}
Since $\mathrm{Col}({\pi})$, viewed as mapping from plans to the space of \textit{c\`adl\`ag} functions, is linear and Lipschitz, Theorem~\ref{theorem: debiasing} implies that
\begin{equation}
    \sqrt{n}(\widehat{\mathrm{PCol}}_{n,n}-\mathrm{Col}^{\star})\overset{D}\to \mathrm{Col}(\bm{M}^\star\bm{G}),
\end{equation}
where $\mathbb{G}$ is the asymptotic limit of $\sqrt n (\bm{t}_n - \bm t, \bm{s}_n - \bm s)$. 
We construct uniform confidence bands by letting $u_{1-\alpha}$ be the $1-\alpha$ quantile of $\|\mathrm{Col}(\bm{M}^\star\bm{G})\|_{\infty}$ and defining
\begin{equation}
    \mathcal{I}_n := \left[ -\frac{u_{1-\alpha}}{\sqrt{n}} + \widehat{\mathrm{PCol}}_{n,n}, \frac{u_{1-\alpha}}{\sqrt{n}} + \widehat{\mathrm{PCol}}_{n,n} \right],
\end{equation}
which is an asymptotic $1-\alpha$ confidence interval for $\mathrm{Col}$.
By the results of Section~\ref{sec: bootstrap}, we can estimate $u_{1-\alpha}$ by bootstrapping $\bm{t}_n$and $ \bm{s}_n$.
We plot our results in Figure~\ref{fig: compare reg and pen}.

\begin{figure}
    \centering
    \begin{subfigure}[c]{0.45\textwidth}
        \centering
        \includegraphics[width=\textwidth]{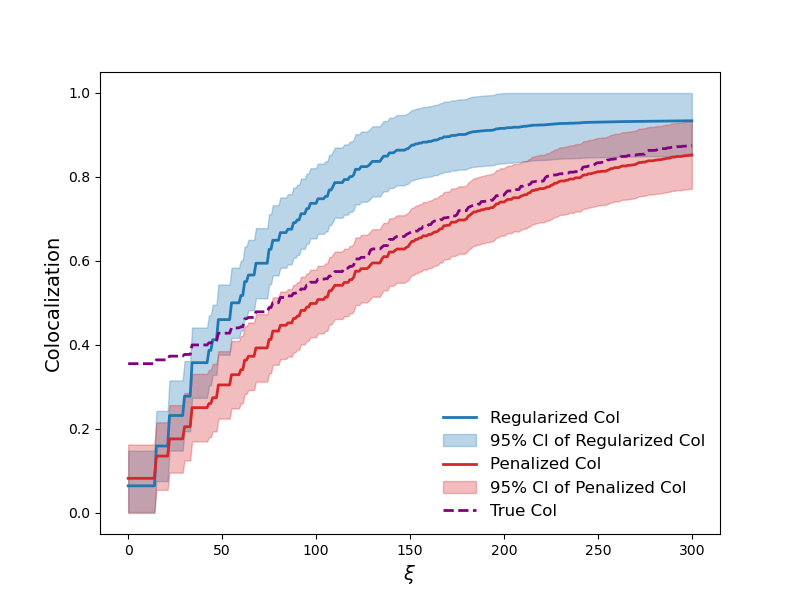}
        \caption{$95\%$ confidence intervals for true colocalization function, computed with regularized Col estimator ($\widehat{\mathrm{RCol}}_{n,n}$) and penalized Col estimator ($\widehat{\mathrm{PCol}}_{n,n}$).}
        \label{fig: compare reg and pen}
    \end{subfigure}
    \begin{subfigure}[c]{0.45\textwidth}
        \centering
        \includegraphics[width=\textwidth]{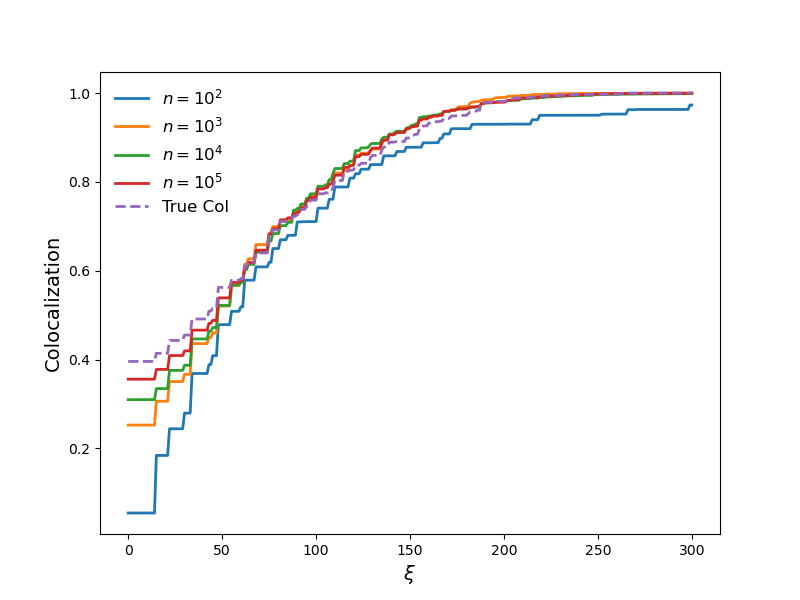}
        \caption{Estimated colocalization $\widehat{\mathrm{PCol}}_{n,n}$ with varying number of samples.}
        \label{fig: colocalzation bias}
    \end{subfigure}
    \caption{Colocalization estimation with subsampled images}
\end{figure}
We observe that our approach yields an estimator that is close to  $\mathrm{Col}^{\star}$ for all sufficiently large $\xi$, and that our confidence set captures $\mathrm{Col}^{\star}$ for a large range of $\xi$.
By contrast, $\widehat{\mathrm{RCol}}_{n,n}$ substantially deviates from $\mathrm{Col}^{\star}$, and the corresponding confidence set is not accurate.
This observation is consistent with our discussion in Section~\ref{sec: entropic}: using entropic optimal transport yields substantially biased estimates of the true optimal plan.
Neither method succeeds in estimating $\mathrm{Col}^{\star}(\xi)$ when $\xi$ is small.
This is an unavoidable feature of the subsampling method: the original distributions $\bm t$ and $\bm s$ have support of size $16384$, but the subsamples have much smaller support, of size at most $2000$.
This extremely sparse sampling means that $\bm t_n$ and $\bm s_n$ fail to capture the small-scale structure of the optimal plan.
We show in Figure~\ref{fig: colocalzation bias} an example on a smaller image (of size $32 \times 32$), where it is clear that the errors present for small $\xi$ substantially decrease as the sample size increases.

\section{Conclusion}
In this work, we propose and analyze a new inference technique for optimal solutions of optimal transport problems and other linear programs.
Unlike the widely applied entropic regularization, our approach yields asymptotically unbiased estimators, centered at the true population-level quantities.
Our main insight is to construct a penalized linear program whose bias is linear in the penalty parameter, which allows us to apply the Richardson extrapolation technique to eliminate the bias.
In future work, it would be valuable to investigate whether similar techniques could be used to develop new inference methods for non-parametric settings, such as for the solution to optimal transport problems with continuous marginals~\citep{GonLouNil22,ManBalNil23}.
\section*{Acknowledgments}

Florentina Bunea was supported in part by NSF--DMS 2210563. Jonathan Niles-Weed was supported in part by NSF--DMS 2210583 and 2339829.

\appendix

\section{Preliminaries}
We first introduce some general preliminary results that will be applied in the proof of our main theorems. 
\subsection{Random linear programs}
\label{sec:preliminary}
Linear programs like~\eqref{primal} have polytope feasible sets, whose vertices are determined by the feasible hyperplane  $\bm{Ax}=\bm{b}$. If solutions exist, the linear program must have a solution at one of the vertices of the feasible polytope.  
 For any subset $I \subseteq \{1, \dots, m\}$, we denote by $\bm{A}_I$ the $k \times |I|$ submatrix of $\bm{A}$ formed by taking the columns of $\bm{A}$ corresponding to the elements of $I$.
Analogously, for $\bm{x} \in \RR^m$, we write $\bm{x}_I$ for the vector of length $|I|$ consisting of the coordinates of $\bm{x}$ corresponding to $I$.

\begin{definition}
	A set $I \subseteq [m]$ is a \emph{basis} if
	\begin{equation}
	|I|=k, \qquad \operatorname{rank}(\bm{A}_I)=k
	\label{Intro:index_set}
	\end{equation}
\end{definition}
 Given a basis $I$, we can define the \emph{basic solution} $\bm{x}(I;\bm{b})$ to be the vector $\bx$ satisfying 
\begin{equation}\label{basis_def}
	\begin{aligned}
	\bx_I &= \bA_I^{-1} \bb \\
	\bx_{I^C} &= \bz\,.
	\end{aligned}
\end{equation}
If this basic solution $\bm{x}(I;\bm{b})$ is also feasible to~\eqref{primal}, i.e., $\bm{x}_I$ is nonnegative, we say that $\bm{x}(I;\bm{b})$ is a \textit{feasible basic solution}. We define the optimal set of bases for~\eqref{primal} as $\mathcal{I}^\star (\bm{b})$, which contains all the bases corresponding to optimal solutions.

We say that the linear program has a \textit{unique non-generate solution} if $\bm{x}^\star$ is the unique solution to the linear program and $\operatorname{supp}(\bm{x}^\star)=k$. In this situation, $\mathcal{I}^\star(\bb)$ has only one element.

Let $I$ be a basis of $\bm{A}$, we show that if $\bm{x}$ is in the hyperplane $\bm{Ax}=\bm{b}_0$, $\bm{x}$ is uniquely determined by its entries on $I^C$.
 \begin{lemma}
 \label{lemma: I_0 is enough}
     Under the condition that the linear program~\eqref{primal} has a unique solution $\bm{x}^\star _{LP}$ with the index set for the zeros entries $I_0\coloneqq\{ i\in\{1,...,m\}|x^\star _{LP,i}=0 \}$, the linear equation for $\bm{x}$:  
     \begin{align*}
         \bm{Ax}&=\bm{b}_0,\\
         x_i&=\alpha_i,\quad i\in I_0 
     \end{align*}
     cannot have multiple solutions, where $\bm{A}$ is the same matrix as shown in the equality constraints of the linear program and $\alpha_i's$ are constants.    

     If the equation has a solution $\bm{x}^\star $, we have $ \bm{x}^\star _{I_0^c}$ as a linear function of $(\bm{b}_0, \bm{\alpha})$ and there exists a constant $C$ depending on the matrix $A$, and $I_0$ such that  $ \|\bm{x}^\star \| \leq C (\max_{i \in I_0} |\alpha_i|+\|\bm{b}_0\|)$.
 \end{lemma}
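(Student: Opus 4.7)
The proof proceeds in three stages. The crucial one is to show that, as a consequence of the uniqueness of $\bm{x}^\star_{LP}$, the submatrix $\bm{A}_{I_0^c}$ has full column rank. Once this is established, the system $\bm{A}\bm{x}=\bm{b}_0$ together with the constraints $x_i=\alpha_i$ on $I_0$ collapses to a consistent overdetermined (or square) linear system for $\bm{x}_{I_0^c}$ alone, from which uniqueness, linearity in $(\bm{b}_0,\bm{\alpha})$, and the norm bound all follow by standard linear algebra.

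For the rank step I plan to argue by contradiction. Suppose there exists $\bm{v}\in\ker(\bm{A}_{I_0^c})\setminus\{\bm{0}\}$, and lift $\bm{v}$ to $\tilde{\bm{v}}\in\RR^m$ by padding with zeros on $I_0$, so that $\bm{A}\tilde{\bm{v}}=\bm{0}$. Because every coordinate of $\bm{x}^\star_{LP}$ indexed by $I_0^c$ is strictly positive by the definition of $I_0$, the perturbation $\bm{x}^\star_{LP}+t\tilde{\bm{v}}$ remains coordinatewise nonnegative and feasible for~\eqref{primal} for all sufficiently small $|t|$. If $\langle\bm{c},\tilde{\bm{v}}\rangle\neq 0$, one sign of $t$ strictly decreases the objective, contradicting the optimality of $\bm{x}^\star_{LP}$; if $\langle\bm{c},\tilde{\bm{v}}\rangle=0$, any small nonzero $t$ yields a distinct optimal solution, contradicting uniqueness. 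Hence $\bm{A}_{I_0^c}$ must have full column rank.

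The remaining two stages are short. Rewriting $\bm{A}\bm{x}=\bm{b}_0$ as $\bm{A}_{I_0^c}\bm{x}_{I_0^c}=\bm{b}_0-\bm{A}_{I_0}\bm{\alpha}$, the full column rank of $\bm{A}_{I_0^c}$ implies that this system has at most one solution, which, when it exists, is given explicitly by the Moore--Penrose formula $\bm{x}_{I_0^c}=(\bm{A}_{I_0^c}^\top\bm{A}_{I_0^c})^{-1}\bm{A}_{I_0^c}^\top(\bm{b}_0-\bm{A}_{I_0}\bm{\alpha})$ and is linear in $(\bm{b}_0,\bm{\alpha})$. The norm bound then follows by applying the triangle inequality and the operator-norm bound to this closed-form expression and combining with $\|\bm{x}^\star_{I_0}\|=\|\bm{\alpha}\|\leq\sqrt{|I_0|}\,\max_{i\in I_0}|\alpha_i|$; the constant $C$ depends only on $\bm{A}$ and $|I_0|$.

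The main obstacle is the rank step; everything downstream is mechanical. It is worth noting that the argument uses only the uniqueness of $\bm{x}^\star_{LP}$ together with the strict positivity of its entries on $I_0^c$ --- neither Slater's condition nor the full row-rank assumption on $\bm{A}$ is required for this particular lemma.
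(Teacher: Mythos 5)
Your proof is correct, and it takes a genuinely different route from the paper's. The paper invokes a structural result from linear programming theory (Theorem~2.2 of Bertsimas and Tsitsiklis) to extract a basis $I$ with $I^c \subseteq I_0$; it then observes that the reduced system constraining only the entries on $I^c$ already has a unique solution because $\bm{A}_I$ is invertible, so the original system (which adds further constraints on $I_0 \setminus I^c$) can have at most one. You instead prove directly that $\bm{A}_{I_0^c}$ has full column rank via a perturbation argument: any nonzero kernel vector, padded with zeros on $I_0$, gives a feasible direction at $\bm{x}^\star_{LP}$ that either improves the objective or produces a second optimum, contradicting uniqueness either way. The two arguments are equally valid, but yours is more elementary and self-contained---it avoids the basic-solution machinery entirely---and, as you observe, it exposes that the conclusion depends only on the uniqueness of $\bm{x}^\star_{LP}$, not on Slater's condition or the full-row-rank hypothesis on $\bm{A}$. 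The trade-off is that the paper's route produces the explicit representation $\bm{x}^\star_I = \bm{A}_I^{-1}(\bm{b}_0 - \bm{A}_{I^c}\bm{x}^\star_{I^c})$ in terms of an invertible square basis matrix, whereas yours produces the Moore--Penrose formula $\bm{x}_{I_0^c} = (\bm{A}_{I_0^c}^\top \bm{A}_{I_0^c})^{-1}\bm{A}_{I_0^c}^\top(\bm{b}_0 - \bm{A}_{I_0}\bm{\alpha})$; both give the required linearity and norm bound.
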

  \begin{proof}
     Since the linear program~\eqref{primal} has unique solution, according to the properties of solutions to linear programs \citep[Theorem 2.2]{bertsimas1997introduction}, there is a basis $I$ such that   $I^c\subseteq I_0$   and the constraints $x_i\geq 0, i\in I^c$ and $\bm{Ax}=\bm{b}$ are linearly independent. Therefore, the equation below, which has reduced constraints,  has a unique solution. This indicates that the original equation has at most one solution. \begin{align*}
         \bm{Ax}&=\bm{b}_0,\\
         x_i&=\alpha_i,\quad i\in I^c 
     \end{align*}

    The solution to the reduced equation $\bm{x}^\star $ can be written as 
    \begin{align*}
        \bm{x}^\star _{I}&=\bm{A}_{I}^{-1}(\bm{b}_0-\bm{A}_{I^c}\bm{x}^\star _{I^c}),\\
        x_i&=\alpha_i,\quad i\in I^c,\\
    \end{align*}
    where $\bm{A}_{I_c}$ and $\bm{A}_{I}$ are submatrices of $\bm{A}$ taking columns with indices of $I_c$ and $I$ separately.

    Therefore, if the equation in the lemma has a solution $\bm{x}^\star $,
    \begin{equation}
        \|\bm{x}^\star \|\leq (m-k)(\|\bm{A}_{I}^{-1}\bm{A}_{I^c}\|+1)  \max_{i\in I_0} |\alpha_i|+\|\bm{A}_{I}^{-1}\|\|\bm{b}_0\|
    \end{equation}
        
 \end{proof}

 Dual programs are utilized a lot in the discussion of convex programs. The dual linear program for~\eqref{primal} is 
 \begin{equation}
    \max_{\bm{\lambda}\in \mathbb{R}^k} \langle \bm{b}, \bm{\lambda}\rangle,\qquad \textrm{s.t.}\ \bm{c}-\bm{A^\top\lambda}\geq 0. 
    \label{dual}
\end{equation}
The primal and dual linear programs achieve optima at $(\bm{x}^\star (\bm{b}), \bm{\lambda}^\star (\bm{b}))$ if and only if $\exists\  \bm{\eta}\in \mathbb{R}^m$ such that:
\begin{equation}
    \label{optimality condition}
    \bA^\top\bm{\lambda}^\star (\bb)+\bm{\eta}=\bc,\ \bA\bx^\star (\bb)=\bb,\ \bx^\star (\bb)\geq 0,\ \bm{\eta}\geq 0, \ \bx^\star (\bb)^\top\bm{\eta}=0. 
\end{equation}
The \textit{complementary slackness} condition $\bx^\star (\bb)^\top\bm{\eta}=0$ is equivalent to the condition that $\operatorname{supp}(\bx^\star (\bb))\cap\operatorname{supp}(\bm{\eta})=\emptyset$. If it also holds that $\operatorname{supp}(\bx^\star (\bb))\cup\operatorname{supp}(\bm{\eta})=[m]$, we say that the solutions satisfy the \textit{strict complementary slackness} condition.  Linear programs have their optimal solutions at vertices or on optimal faces of the feasible polytope. When the Slater's condition holds for both dual and primal problems, the optimal faces are bounded ~\citep[Corollary 7.1k]{schrijver1998theory} and the strict complementary slackness condition can be achieved at the centroids of the primal and dual optimal faces.

When we consider a perturbed linear program where the vector $\bm{b}$ is replaced by a nearby vector $\bm{b}'$,  according to Proposition 2 in~\citep{LiuBunNil23}, we have the inclusion of the optimal basis set $\mathcal{I}^\star (\bm{b}')\subseteq \mathcal{I}^\star (\bm{b})$ if $\|\bm{b}'-\bm{b}\|\leq \delta(\bm{A},\bm{b})$.  Under the assumption for the linear programs~\ref{assumption on LP} which ensures that the primal problem~\eqref{primal} has a unique solution, the feasible basic solution $\bm{x}^\star (\bm{b'})$ can be expressed as
\begin{equation}
\label{appendix: xb' expression}
    \bm{x}^\star (\bm{b'})=\bm{x}^\star (\bm{b})+\bm{x}(I;\bm{b}'-\bm{b}) \text{ for } I\in \mathcal{I}^\star (\bm{b}').
\end{equation}
In the case where $\bm{x}(I; \bm{b}'-\bm{b})$ is not unique, the optimal solution to the linear program with parameter $\bm{b}'$ is achieved at an optimal face with the points $\bm{x}^\star (\bm{b'})$ as its vertices. 
 
 In the situation where $\bm{b}$ in the linear program~\eqref{primal} is replaced by a random vector $\bm{b}_n$ (such as a plug-in estimator for $\bm{b}$ with distributional limit given in~\eqref{converge bn}), our previous result demonstrates that the random solution shares the same basis as the true solution. Furthermore, the limit distribution of the random solution has a nonlinear relationship with $\mathbb{G}$.
 \begin{theorem}[Theorem 3, \citet{LiuBunNil23}]
 \label{nonGaussion limits}
   Suppose that~\eqref{primal} satisfies~\ref{assumption on LP}.
	If $\bm{b}_n$ satisfies the distributional limit~\eqref{converge bn},  then
	\begin{equation}
		q_n (\bx^\star (\bb_n) - \bx^\star (\bb)) \overset{D}\to\bm{p}^\star _{\bb} (\mathbb{G})\,,
	\end{equation}
	where $\bm{p}^\star _{\bb} (\mathbb{G})$ is the set of optimal solutions to the following linear program:
	\begin{equation}
		\min \langle \bc, \bm{p} \rangle: \bA \bm{p} = \mathbb{G}, \quad \bm{p}_{i} \geq 0 \quad \forall i \notin S(\bx^\star (\bb))\,.
  \end{equation} 
  Here $\bx^\star (\bb_n)$ and $\bx^\star (\bb)$ are the solutions to the linear programs with $\bb_n$ and $\bb$ in the equality constraints separately, and  $S(\bx^\star (\bb))$ is the support of the $\bx^\star (\bb)$.
\end{theorem}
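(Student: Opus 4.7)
The plan is to express the scaled difference $\bm{p}_n := q_n(\bx^\star(\bb_n) - \bx^\star(\bb))$ as the optimum of a sequence of linear programs whose limit is the LP appearing in the statement, and then deduce the distributional convergence from~\eqref{converge bn} by a parametric-programming stability argument.

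First I would observe that, by optimality of $\bx^\star(\bb_n)$ for the LP with right-hand side $\bb_n$, the vector $\bm{p}_n$ is itself a minimizer of the reparameterized program
\begin{equation*}
	\min_{\bm{p}\in\RR^m} \langle \bc,\bm{p}\rangle \quad \text{s.t. } \bA\bm{p}=q_n(\bb_n-\bb),\ \ p_i \geq -q_n \bx^\star(\bb)_i \text{ for all } i \in [m].
\end{equation*}
For $i \in S := S(\bx^\star(\bb))$ the lower bound $-q_n \bx^\star(\bb)_i$ tends to $-\infty$ and becomes inactive in the limit; for $i \notin S$ the lower bound equals $0$ identically. Hence the feasible region converges (in the sense of Painlev\'e--Kuratowski) to the feasible region of the LP in the statement once $q_n(\bb_n-\bb)$ is replaced by its weak limit $\mathbb{G}$.

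Second, I would certify that the limiting LP is well-posed almost surely. Under Assumption~\ref{assumption on LP}, the dual LP~\eqref{dual} admits an optimal solution $\bm{\lambda}^\star(\bb)$ satisfying strict complementary slackness (attainable at the centroid of the dual optimal face using Slater's condition), which means $(\bc-\bA^\top\bm{\lambda}^\star(\bb))_i > 0$ for all $i \notin S$. Using this dual certificate, on the feasible set of the limiting LP we may rewrite
\begin{equation*}
	\langle \bc, \bm{p}\rangle = \langle \bm{\lambda}^\star(\bb), \mathbb{G}\rangle + \sum_{i \notin S} (\bc-\bA^\top\bm{\lambda}^\star(\bb))_i\, p_i,
\end{equation*}
so the objective is bounded below (all summands are nonnegative), and minimizing it amounts to driving the coordinates $p_i$ with $i\notin S$ as small as possible subject to $\bA\bm{p}=\mathbb{G}$. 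Full row rank of $\bA$ combined with Assumption~\ref{assumption on LP} guarantees feasibility. The same dual certificate, used on the prelimit program, shows that $\bm{p}_n$ is bounded in probability.

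Finally, I would invoke the classical parametric-programming stability of linear programs: for an LP whose right-hand side lies in a neighborhood of a value at which the program is feasible and bounded, the optimal value is continuous and the argmin correspondence is outer semicontinuous in that right-hand side; see e.g.~\cite{schrijver1998theory}. Combining this with $q_n(\bb_n - \bb) \overset{d}{\to} \mathbb{G}$ and the continuous mapping theorem (applied after passing to an almost-sure representation via Skorokhod) yields the stated limit. The main obstacle is that when $\mathbb{G}$ lands in a position where the limit LP has a non-unique optimum with positive probability, the symbol $\bm{p}^\star_\bb(\mathbb{G})$ must be interpreted as a set, and the convergence must be read through appropriate upper/lower semicontinuity of the argmin correspondence; in practice (e.g.\ when $\mathbb{G}$ has a Gaussian law) the set of $\mathbb{G}$ yielding multiple optima has measure zero by standard genericity arguments, so the convergence can be stated as weak convergence to a single random vector.
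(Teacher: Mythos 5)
This theorem is cited by the paper (as ``Theorem~3'' of the prior work by Liu, Bunea, and Niles-Weed, 2023) and is not re-proved here, so there is no in-paper proof to compare against. Your proposal reconstructs the result along sensible lines, and the key structural observations are correct: the change of variables $\bm p = q_n(\bm x - \bx^\star(\bb))$ does yield that $\bm p_n$ minimizes the reparameterized LP you wrote; strict complementary slackness at the centroid of the dual optimal face (guaranteed by Slater's condition) does give the positive dual slacks on $I_0 = S^c$; and the resulting decomposition $\langle\bc,\bm p\rangle = \langle\bm\lambda^\star,\bA\bm p\rangle + \sum_{i\notin S}(\bc-\bA^\top\bm\lambda^\star)_i\,p_i$ is the right tool for both lower-boundedness of the limiting LP and stochastic boundedness of $\bm p_n$.

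The step that needs more care is the final continuity argument. ``Classical parametric-programming stability in the right-hand side'' does not apply directly here, because the constraint \emph{structure} changes with $n$: the lower bounds $p_i\geq -q_n\bx^\star(\bb)_i$ for $i\in S$ are present in the prelimit but absent in the limit. Painlev\'e--Kuratowski convergence of the feasible regions is also not enough on its own to carry minimizers to minimizers without a separate argument, since the objective is linear and the feasible sets are unbounded. What closes the gap is to combine your two observations into one argument: (i) your dual certificate gives an $O_p(1)$ bound on $\bm p_n$; (ii) on any compact set, for $n$ large the constraints $p_i\geq -q_n\bx^\star(\bb)_i$, $i\in S$, are strictly slack and hence irrelevant, so $\bm p_n$ solves exactly the LP $\min\langle\bc,\bm p\rangle$ subject to $\bA\bm p=q_n(\bbn-\bb)$, $p_i\geq 0$, $i\notin S$, intersected with that compact set. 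Only after this reduction is the problem a pure right-hand-side perturbation of a fixed LP, and Walkup--Wets Lipschitz/Hausdorff stability of LP solution sets then applies. Passing to a Skorokhod representation and applying a continuous-mapping-type argument for the (set-valued, outer-semicontinuous) solution map is a further subtlety: as you note, when the limiting LP has nonunique optima on a set of positive $\bm G$-probability, the conclusion is genuine convergence of solution \emph{sets} (e.g., in Hausdorff distance), not weak convergence of a single random vector, and the genericity claim you invoke need not hold in structured applications such as OT with tied costs. Stating the limit in set-valued form, as the theorem itself does, avoids this issue.
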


We have provided a $2\times 2$ optimal transport problem in Section~\ref{sec:intro}, Proposition~\ref{prop:2by2solution} as a small example for such nonlinearity. We give a brief proof here. 
\begin{proof}[Proof of Proposition~\ref{prop:2by2solution}]
Since $\bm{t}_n$ and $\bm{s}_n$ represent empirical frequencies, they are naturally nonnegative and satisfy $t_{n,1}+ t_{n,2}= 1$ and $s_{n,1}+ s_{n,2}= 1$. Then, the feasibility of $\pi_n$ can be easily checked by substituting~\eqref{eq:pi_n expression} into the constraints of the program~\eqref{equ: 2by2 example}.

To show the optimality of $\pi_n$, we consider an  arbitrary alternative feasible plan $\pi'$ and their difference $\Delta \pi=\pi'-\pi_n$. 
Since $t_{n,2}-s_{n,2}=-(t_{n,1}-s_{n,1})$, we have
$\min\{\pi_{n,12},\pi_{n,21}\}=0.$
Since both $\pi_n$ and $\pi'$ obey the  constraints in~\eqref{equ: 2by2 example}, $\Delta \pi$ takes the form:
\begin{equation}
    \Delta \pi=\begin{pmatrix}
			-\delta & \delta\\
			\delta & -\delta\,
		\end{pmatrix}\,,
\end{equation}
for $\delta>0$. 

Thus, we have the comparison of transport cost
\begin{equation}
    \pi'_{12}+\pi'_{21}>\pi_{n,12}+\pi_{n,21}.
\end{equation}
This indicates that $\pi_n$ is the only minimal cost transport plan. 

We have the convergence law for the empirical frequency:
\begin{equation}
    \sqrt{n}(\bm{t}_n-\bm{t})\overset{d}\to\frac{1}{2}\mathbb{G},
\end{equation}
where $\mathbb{G}$ represents a standard Gaussian random variable and the same law applies the $\bm{s}_n$.
Therefore, the convergence law~\eqref{eq:pi_n limit} is a direct result of applying the continuous mapping theorem to $\pi_n$.

\end{proof}

\subsection{Properties of convex functions and convex optimization}
Firstly we introduce the key lemma we will apply during the proof of convergence of the penalized program trajectories. Intuitively, this lemma establishes a lower bound on how quickly a locally strongly convex function grows compared to its tangent hyperplane. It guarantees that when moving from a point to any other point in the domain (both within an affine subspace), the function increases by at least a constant times either the distance between points or the squared distance, whichever is smaller. This bound captures both the quadratic growth behavior near the reference point and the potentially faster growth far from it. 
\begin{lemma}[Lower bound on convex function growth]\label{lem:new_norm_bound}
	Let $\mathbb{A}=\{\bm{\alpha}+\bm{w}:\bm{w}\in \mathcal{V}\}\subset\Rset^m$ be an affine subspace of $\mathbb{R}^m$ where $\mathcal{V}$ is a vector space.
	
	Let $g: \Rset^m \to \overline \Rset$ be a lower semi-continuous convex function.
	Assume that $g$ is $C^2$ on $\inter \dom g$ and that 
    $g$ is locally strongly convex on $\mathbb A$.
	Let $K$ be a compact subset of $\relint(\dom g \cap \mathbb A)$.
	There exists a constant $C = C(K) > 0$ such that for any $\bd_1 \in K$ and any $\bd_2 \in \dom g \cap\mathbb A$,
	\begin{equation*}
		g(\bd_2) - g(\bd_1) - \langle \nabla g(\bd_1), \bd_2 - \bd_1 \rangle \geq C \min\{ \|\bd_2 - \bd_1\|, \|\bd_2 - \bd_1\|^2\}\,.
	\end{equation*}
\end{lemma}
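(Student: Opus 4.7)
The plan is to reduce the multivariate inequality to a one-dimensional statement along the segment from $\bd_1$ to $\bd_2$. Define
\begin{equation*}
	\phi(t) := g(\bd_1 + t(\bd_2 - \bd_1)) - g(\bd_1) - t\langle \nabla g(\bd_1), \bd_2 - \bd_1 \rangle, \qquad t \in [0,1].
\end{equation*}
Since $\dom g$ is convex, the entire segment lies in $\dom g$; since $g$ is convex, $\phi$ is convex on $[0,1]$ with $\phi(0) = 0$ and $\phi'(0^+) = 0$. The quantity to be bounded below is exactly $\phi(1)$.

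First I would extract \emph{uniform} strong convexity around $K$. Because $g$ is $C^2$ and locally strongly convex on $\mathbb{A}$, the restriction of the Hessian of $g$ to $\mathcal{V}$ is continuous and positive definite at every point of $K$. Compactness of $K$ and a standard covering argument then yield constants $\mu > 0$ and $r \in (0,1]$ such that, for every $\bd_1 \in K$, the function $g$ restricted to $\mathbb{A} \cap B(\bd_1, r)$ is $\mu$-strongly convex. Along the line $t \mapsto \bd_1 + t(\bd_2 - \bd_1)$ with direction $\bm{v} := \bd_2 - \bd_1 \in \mathcal{V}$, strong convexity gives $\phi''(t) \geq \mu \|\bm{v}\|^2$ whenever $t\|\bm{v}\| \leq r$, and integrating twice from $t=0$ produces
\begin{equation*}
	\phi(t) \geq \tfrac{\mu}{2} t^2 \|\bm{v}\|^2 \quad \text{for all } t \in [0, \min\{1, r/\|\bm{v}\|\}].
\end{equation*}

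Set $t_0 := \min\{1, r/\|\bm{v}\|\}$ and split into two cases. If $\|\bm{v}\| \leq r$, then $t_0 = 1$ and the display above already yields $\phi(1) \geq \tfrac{\mu}{2}\|\bm{v}\|^2$. If $\|\bm{v}\| > r$, then $t_0 = r/\|\bm{v}\| < 1$ and $\phi(t_0) \geq \tfrac{\mu r^2}{2}$; to propagate this to $t=1$, I would use the elementary fact that for a convex function $\phi$ with $\phi(0)=0$, the ratio $t \mapsto \phi(t)/t$ is nondecreasing on $(0,\infty)$, whence $\phi(1) \geq \phi(t_0)/t_0 = \tfrac{\mu r}{2}\|\bm{v}\|$. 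Since $r \leq 1$, combining both cases gives $\phi(1) \geq \tfrac{\mu r}{2} \min\{\|\bm{v}\|, \|\bm{v}\|^2\}$, proving the lemma with $C = \tfrac{\mu r}{2}$.

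The main obstacle is the initial uniformization step. ``Local strong convexity on $\mathbb{A}$'' is a pointwise statement, and one must promote it to a common modulus $\mu$ and a common radius $r$ valid across all of $K$ \emph{simultaneously}; this is what makes compactness essential and what lets the bound be applied with $\bd_1$ ranging over $K$. One must also be careful about the interplay between $\inter \dom g$ (where $C^2$-regularity is assumed) and $\relint(\dom g \cap \mathbb{A})$ (where $K$ lives), ensuring that the Hessian restricted to $\mathcal{V}$ is continuous and uniformly positive definite on a tubular neighborhood of $K$ in $\mathbb{A}$. Once this step is in place, the rest of the argument is a clean one-variable convexity computation.
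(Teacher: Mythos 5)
Your proof is correct and takes a genuinely different route from the paper's. The paper proceeds by contradiction and compactness of a ``bad set'': it defines $s(\bd_1,\bd_2) := g(\bd_2) - g(\bd_1) - \langle \nabla g(\bd_1), \bd_2 - \bd_1\rangle$, shows via a sequential compactness/scaling argument (using that $s(\bd,\bd+t\bv)/t$ is nondecreasing in $t \geq 1$ together with lower semi-continuity of $g$) that for small $C_0$ the set $S := \{(\bd_1,\bd_2) : s(\bd_1,\bd_2) \leq C_0\|\bd_1-\bd_2\|\}$ is compact, and then applies $\lambda$-strong convexity of $g$ over the (compact) convex hull of $S$ to finish. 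You instead uniformize the local strong convexity over $K$ directly, obtaining a single modulus $\mu$ valid on a tube $\{x \in \mathbb A : d(x,K) \le r\}$, and then reduce to a one-variable computation: the quadratic lower bound for $\phi(1)$ when $\|\bd_2-\bd_1\| \leq r$, and for $\|\bd_2-\bd_1\| > r$ the classical monotonicity of $t \mapsto \phi(t)/t$ for a convex $\phi$ with $\phi(0)=0$ to propagate the bound at $t_0 = r/\|\bd_2-\bd_1\|$ out to $t=1$. Your approach is more constructive (it gives the explicit constant $C = \mu r/2$) and avoids the contradiction argument, while the paper's approach localizes the strong-convexity requirement to only the convex hull of the bad set $S$ rather than a fixed tube around $K$; in practice both need the same kind of care about whether the relevant compact sets sit inside $\inter \dom g$ (the paper also glides over this when asserting continuity of $\nabla g$ on $K$), so this is not a shortcoming specific to your argument, and your flagging of the $\inter \dom g$ versus $\relint(\dom g \cap \mathbb A)$ distinction is exactly the right caveat.
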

\begin{proof}
	Define $s(\bd, \bd') := g(\bd') - g(\bd) - \langle \nabla g(\bd), \bd' - \bd \rangle$.
	The locally strong convexity of $g$ within space $\mathbb{A}$ implies that $s$ is strictly positive for all $\bd,\bd'\in \mathbb{A}$ and $\bd \neq \bd'$.
	We first show that for $C_0 > 0$ sufficiently small, the set $\{(\bd_1, \bd_2) \in K \times (\dom g \cap\mathbb A): s(\bd_1, \bd_2) \leq C_0 \|\bd_1 - \bd_2\|\}$ is compact.
	Indeed, if not, we can find a sequence $(\bd_{1,n}, \bd_{2, n})$ with $\|\bd_{2,n}\| \to \infty$ such that
	\begin{equation*}
		\lim_{n \to \infty} \frac{s(\bd_{1,n}, \bd_{2,n})  }{\|\bd_{1,n} - \bd_{2,n}\|} = 0\,.
	\end{equation*}
	The convexity of $g$ implies that for any $\bd \in \dom g\cap\mathbb A$, $\bv \in \mathcal{V}$ and $t \geq 1$
	\begin{equation*}
		\frac{s(\bd, \bd + t \bv)}{t} = \frac{g(\bd + t \bv) - g(\bd)}{t} - \langle \nabla g(\bd), \bv \rangle \geq g(\bd + \bv) - g(\bd) - \langle \nabla g(\bd), \bv \rangle = s(\bd, \bd + \bv)\,.
	\end{equation*}
	Hence
	\begin{equation}\label{eq:sandwich}
		0 = \lim_{n \to \infty} \frac{s(\bd_{1,n}, \bd_{2,n})  }{\|\bd_{1,n} - \bd_{2,n}\|}  \geq \limsup_{n \to \infty} s(\bd_{1, n}, \bd_{1, n} + (\bd_{2, n}-\bd_{1,n})/\|\bd_{1,n} - \bd_{2,n}\|)\,.
	\end{equation}
	Passing to a subsequence, we may assume that $\bd_{1,n} \to \bd_1 \in K$ and $(\bd_{2, n}-\bd_{1,n})/\|\bd_{1,n} - \bd_{2,n}\| \to \bv$ with $\|\bv\| = 1$.
	Since $\bd_1 \in K \subseteq \relint(\dom g \cap \mathbb A)$, both $g$ and $\nabla g$ are continuous at $\bd_1$.
	The lower semi-continuity of $g$ then implies
	\begin{equation*}
		\limsup_{n \to \infty} s(\bd_{1, n}, \bd_{1, n} + (\bd_{2, n}-\bd_{1,n})/\|\bd_{1,n} - \bd_{2,n}\|) \geq s(\bd_1, \bd_1 + \bv)\,,
	\end{equation*}
	which contradicts~\eqref{eq:sandwich}.
	
	Let $C_0$ be such that $S := \{(\bd_1, \bd_2) \in K \times (\dom g \cap \mathbb A): g(\bd_2) - g(\bd_1) - \langle \nabla g(\bd_1), \bd_2 - \bd_1 \rangle \leq C_0 \|\bd_1 - \bd_2\|\}$ is compact.
	Since $g$ is $C^2$ with $\inf_{\bm w \in \mathcal V, \|\bm w\|^2 = 1}\langle \bm w, (\nabla^2 g) \bm w\rangle > 0$ on $\dom g \cap \mathbb A$, there exists $\lambda > 0$ such that $g$ is $\lambda$-strongly convex on the convex hull of $S$.
	In particular, for $(\bd_1, \bd_2) \in S$, it holds
	\begin{equation*}
		\frac \lambda 2 \|\bd_1 - \bd_2\|^2 \leq g(\bd_2) - g(\bd_1) - \langle \nabla g(\bd_1), \bd_2 - \bd_1 \rangle\,.
	\end{equation*}
	
	Therefore either $g(\bd_2) - g(\bd_1) - \langle \nabla g(\bd_1), \bd_2 - \bd_1 \rangle \geq C_0 \|\bd_1 - \bd_2\|$ or $(\bd_1, \bd_2) \in S$, so $g(\bd_2) - g(\bd_1) - \langle \nabla g(\bd_1), \bd_2 - \bd_1 \rangle \geq \tfrac{\lambda}{2} \|\bd_1 - \bd_2\|^2$.
	Choosing $C = \min\{C_0, \lambda/2\}$ proves the claim.
\end{proof}
Below is another lemma related to a barrier-type convex optimization problem. This lemma establishes an upper bound on the gradient magnitudes at the optimal solution when minimizing a barrier function over a compact polytope.  This lemma indicates that if $\mathcal{P}^n$ is a sequence of shrinking polytopes such that $v^n_{max}\ll1$ and $\frac{v^n_{max}}{v^n_{min}}=\Theta (1)$, then with $f$ satisfying conditions in the lemma, we have $\max_{i\in I'}|f'(\bm{x_{0,i}}+\bm{x}^\star _i)|=O(f'(v^n_{max}))$. 
\begin{lemma}
    \label{lemma for 2.1}
    Let  polytope $\mathcal{P}$ in $\mathbb{R}^{m}$ be defined as:
    \begin{equation}
        \mathcal{P}=\{\bm{x}\in {\mathbb{R}^{m}}|\bm{A}\bm{x}=\bm{b},\ \bm{x}_i\geq 0 \text{ for } i\in I\},
    \end{equation}
    where the $\bm{A}\in \mathbb{R}^{k\times m}$ has full row rank and $I $ is a index set.  Suppose that $\mathcal{P}$ is compact with a centroid $\bm{x}_c$.
    
    Let $f(x): (0,+\infty)\rightarrow\mathbb{R}$ be  a convex function in $C^1$ that has the properties: 1)$\lim_{x\downarrow 0^+}f'(x)=-\infty$ 2) $\lim_{x\downarrow 0^+}-xf'(x)<\infty$ and 3) there exits $\delta>0$ such that $xf'(x)$ decreasing in $(0,\delta)$.

    Consider the convex optimization problem:
      \begin{equation}
    \label{equ: minimize in poly}
        \bm{x}^\star =\argmin_{\bm{x}\in \mathcal{P}} \sum_{i=1}^mf(\bm{x}_0+\bm{x})_i,
    \end{equation}
    where $\bm{x}_0\in \bm{R}_{\geq 0}^m$ and $\text{supp }\bm{x}_0=I^c $.

    Suppose that $m=|\cup_{\bm{x}\in \mathcal{P}} \text{supp} (\bm{x}+\bm{x}_0)|$. Let $v_{max}=\max_{\bm{x}\in \mathcal{P}}|\bm{x}|_{\infty}$  and  $v_{min}=\min_{i\in[m]}x_{c,i}$.  Let $x_{max}=\max_{i\in I^C} x_{0,i}$ and $x_{min}=\min_{i\in I^C} x_{0,i}$.
    
    If $v_{max}<\delta$ and  $ x_{min}>2v_{max}$,
    we have 
    $$\max_{i\in [m]}|f'(\bm{x_{0,i}}+\bm{x}^\star _i)|\leq  m \frac{v_{max}}{v_{min}}( |f' (v_{max})|+2(|f'(x_{max}+v_{max})|\vee|f'(x_{min}-v_{max})|)).$$
\end{lemma}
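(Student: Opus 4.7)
The plan is to combine the first-order optimality conditions at $\bm{x}^\star$ with the specific analytic properties of $f$, particularly that $xf'(x)$ is decreasing near zero, to obtain the desired bound. First, observe that the barrier property $\lim_{x\downarrow 0} f'(x) = -\infty$ forces $\bm{x}^\star$ to lie in the interior of the effective domain, so every coordinate of $\bm{x}_0 + \bm{x}^\star$ is strictly positive. Consequently, the standard first-order condition for the equality-constrained problem applies, and since $\bm{x}_c - \bm{x}^\star \in \ker(\bm{A})$, this gives the key identity
\begin{equation*}
\sum_{i=1}^m g_i (x_{c,i} - \bm{x}^\star_i) = 0, \quad \text{where } g_i := f'((\bm{x}_0+\bm{x}^\star)_i),
\end{equation*}
which I would exploit to propagate bounds on $|g_i|$ across indices.

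Next, I would bound $|g_i|$ on $I^c$ and $I$ separately. For $i \in I^c$, the hypothesis $x_{min} > 2 v_{max}$ together with $|\bm{x}^\star_i| \leq v_{max}$ places $(\bm{x}_0+\bm{x}^\star)_i$ in $[x_{min}-v_{max},\, x_{max}+v_{max}]$, so monotonicity of $f'$ yields $|g_i| \leq G := |f'(x_{max}+v_{max})| \vee |f'(x_{min}-v_{max})|$. For $i \in I$ we have $(\bm{x}_0+\bm{x}^\star)_i = \bm{x}^\star_i \in (0, v_{max}]$. The interesting case is $f'(v_{max}) < 0$; otherwise the condition that $xf'(x)$ is decreasing on $(0,\delta)$ together with monotonicity of $f'$ forces $g_i \in [0, f'(v_{max})]$ and the claim is immediate. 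When $f'(v_{max}) < 0$, evaluating the decreasing function $xf'(x)$ at $x = \bm{x}^\star_i \leq v_{max} < \delta$ gives $g_i < 0$ and the \emph{quantitative barrier estimate} $\bm{x}^\star_i |g_i| \leq v_{max}|f'(v_{max})|$.

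Finally, I would combine these bounds via the identity. Rewriting it as $\sum_{i\in I} g_i x_{c,i} = \sum_{i \in I} g_i \bm{x}^\star_i + \sum_{i\in I^c} g_i(\bm{x}^\star_i - x_{c,i})$, the left side is at most $-v_{min} \sum_{i\in I}|g_i|$ because each $g_i < 0$ and $x_{c,i} \geq v_{min}$; the first right-hand sum is at least $-|I| v_{max}|f'(v_{max})|$ by the barrier estimate; and the second is at least $-2|I^c|v_{max} G$ using $|\bm{x}^\star_i - x_{c,i}| \leq 2 v_{max}$ for $i \in I^c$. Rearranging yields
\begin{equation*}
\max_{i\in I}|g_i| \leq \sum_{i\in I}|g_i| \leq m \frac{v_{max}}{v_{min}} \bigl(|f'(v_{max})| + 2G\bigr),
\end{equation*}
which is the claimed bound for indices in $I$; the bound $|g_i| \leq G$ for $i \in I^c$ is dominated since $m v_{max}/v_{min} \geq 1$.

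The main obstacle is the quantitative barrier estimate $\bm{x}^\star_i |g_i| \leq v_{max}|f'(v_{max})|$ in the case $f'(v_{max}) < 0$: transmuting the ``infinite'' blow-up of $f'$ near zero into a usable product bound is exactly the role of assumption (3) on $f$, and the inequality must carry the correct sign so that the first-order identity yields a \emph{one-sided} bound on $\sum_{i\in I}|g_i|$ rather than an absolute-value inequality that would only be tautological. Once this piece is secured, the remaining work is careful sign-tracking through the variational identity.
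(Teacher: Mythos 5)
Your proposal is correct and follows essentially the same route as the paper: both obtain the identity $\sum_i f'((\bm{x}_0+\bm{x}^\star)_i)(x_{c,i} - x^\star_i) = 0$ from stationarity (you phrase it as orthogonality of $\bm{x}_c - \bm{x}^\star$ to $\operatorname{range}(\bm{A}^\top)$, the paper passes through the dual vector $\bm{\xi}$ and the equality $\bm{x}_c^\top f' = \bm{b}^\top\bm{\xi} = (\bm{x}^\star)^\top f'$), then both split into $I$ and $I^C$, bound the $I^C$ gradients crudely, use the monotonicity of $xf'(x)$ on $(0,\delta)$ to control $\sum_{i\in I}x^\star_i f'(x^\star_i)$, and finally divide by $v_{min}$. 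One small remark: your ``otherwise'' case $f'(v_{\max}) \ge 0$ is actually vacuous under the stated hypotheses (property 1 forces $f'<0$ near $0$, and then the monotonicity of $xf'(x)$ forces $xf'(x)\le 0$, hence $f'<0$, on all of $(0,\delta)$), and the claim you make there ($g_i\in[0,f'(v_{\max})]$) would actually contradict $f'(x)\to -\infty$; since the case cannot arise this does no harm, but it would be cleaner to note the case is empty rather than to argue it.
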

\begin{proof}
    Since every point on the boundary of the polytope $\mathcal{P}$ has at least one zero entry and $f'(x)$ diverges to $-\infty $ at zero, the first order derivative of the cost function in~\eqref{equ: minimize in poly} diverges on the boundary of the feasible domain. Hence, the optimal solution achieves in the  relative interior of $\mathcal{P}$, i.e. $x^\star _i>0$ for $i\in I$.

    We consider the first order optimal condition of the minimization problem:
    \begin{equation}
        f'(\bm{x}_0+\bm{x}^\star )=\bm{A}^\top\bm{\xi},
    \end{equation}
    with some dual optimal vector $\bm{\xi}\in \mathbb{R}^k$.
    Let $\bm{x}_c$ be the centroid of $\mathcal{P}$. Multiplying the optimal condition with $\bm{x}_c$ and $\bm{x}^\star $, we obtain:
   \begin{equation}
\bm{x}_c^{\top}f'(\bm{x}_0+\bm{x}^\star )=\bm{b}^\top\bm{\xi}={\bm{x}^\star}^{\top}f'(\bm{x}_0+\bm{x}^\star)
    \end{equation}
    Since $\bm{x}_0$ has support only on $I^C$, we have
    \begin{equation}
    \label{equ: lemma for additional 1}
     \bm{x}_c^{\top}f'(\bm{x}_0+\bm{x}^\star)=\sum_{j\in I^C}x_{c,j}f'(x_{0,j}+x^\star_j)+\sum_{j\in I}x_{c,j}f'(x^\star_j),
    \end{equation}
    and
    \begin{equation}
    (\bm{x}^\star)^{\top}f'(\bm{x}_0+\bm{x}^\star)=\sum_{j\in I^C}x^\star_{j}f'(x_{0,j}+x^\star_j)+\sum_{j\in I}x^\star_{j}f'(x^\star_j).
    \end{equation}
    Since $x_{min}>2v_{max}$, we have
   \begin{equation}
        \max_{j\in I^C}\left|f'(x_{0,j}+x^\star_j)\right|\leq |f'(x_{max}+v_{max})|\vee|f'(x_{min}-v_{max})|,
    \end{equation}
    hence
    \begin{equation}
        \left|\sum_{j\in I^C}x^\star_{j}f'(x_{0,j}+x^\star_j)\right|\vee\left|\sum_{j\in I^C}x_{c,j}f'(x_{0,j}+x^\star_j)\right|\leq mv_{max}(|f'(x_{max}+v_{max})|\vee|f'(x_{min}-v_{max})|).
    \end{equation}
    Since the function $xf'(x)$  is non-positive and is decreasing when $x\leq \delta$,
    \begin{equation}
    \label{equ: lemma for additional 2}
        0\geq\sum_{j\in I}x^\star_{j}f'(x^\star_j)\geq m v_{max}f'(v_{max}).
    \end{equation}
    Combining equation~\eqref{equ: lemma for additional 1} to~\eqref{equ: lemma for additional 2}, we have
    \begin{equation}
        0\leq-\sum_{j\in I}x_{c,j}f'(x^\star_j)\leq m (-v_{max} f' (v_{max})+2v_{max}(|f'(x_{max}+v_{max})|\vee|f'(x_{min}-v_{max})|)).
    \end{equation}
    Therefore,
    \begin{equation}
        \max_{j\in I}\left|f'(x^\star _j)\right|\leq m \frac{v_{max}}{v_{min}}( |f' (v_{max})|+2(|f'(x_{max}+v_{max})|\vee|f'(x_{min}-v_{max})|))
    \end{equation}
    
\end{proof}
\subsection{Penalty functions}
We have introduced our assumptions on the penalty function through a conjugate expression. To understand properties of the proper penalty functions, we list several results of the Legendre transformation ~\citep{rockafellar1997convex}.
\begin{definition}[Corollary 26.3.1 in \cite{rockafellar1997convex}]
    A closed proper convex $f$ is a \textit{convex function of Legendre type} if $\partial f$ is one-to-one.
\end{definition}
 The theorem characterizes the duality properties of convex functions of Legendre type.
\begin{theorem}[Theorem 26.5 in ~\cite{rockafellar1997convex}]
\label{theorem: legendre conjugate}
Let $f$ be a closed convex function. Let $C = \operatorname{int}(\operatorname{dom} f)$ 
and $C^*  = \operatorname{int}(\operatorname{dom} f^* )$. Then $(C,f)$ is a convex function of Legendre type 
if and only if $(C^* ,f^* )$ is a convex function of Legendre type. When these 
conditions hold, $(C^* ,f^* )$ is the Legendre conjugate of $(C,f)$, and $(C,f)$ 
is in turn the Legendre conjugate of $(C^* ,f^* )$. The gradient mapping $\nabla f$ is 
then one-to-one from the open convex set $C$ onto the open convex set $C^* $, 
continuous in both directions, and $\nabla f^*  = (\nabla f)^{-1}$.
\end{theorem}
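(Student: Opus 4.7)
The plan is to build the proof around the fundamental subdifferential inversion identity for closed convex functions and then extract differentiability and surjectivity from the ``Legendre type'' hypothesis, which, via the definition given just before the theorem, asserts that $\partial f$ is single-valued as a relation in both directions. Throughout, I would lean on the background fact that for any closed proper convex $f$ one has $f^{**}=f$, and the Fenchel--Young equality
\begin{equation*}
    y\in\partial f(x)\iff f(x)+f^*(y)=\langle x,y\rangle\iff x\in\partial f^*(y).
\end{equation*}
This identity is what converts statements about $\nabla f$ into statements about $\nabla f^*$.

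First I would show that the Legendre hypothesis on $(C,f)$ forces $\partial f$ to be single-valued on $C$ and $\partial f^*$ to be single-valued on its effective domain. The single-valuedness on $C$ follows because $\partial f$ is one-to-one as a relation and $C$ is the interior of $\operatorname{dom} f$, so $\partial f(x)$ is nonempty, convex, and has to reduce to a point (otherwise composing the relation with its inverse would fail injectivity). By Fenchel--Young inversion, the inverse relation of $\partial f$ is $\partial f^*$, so $\partial f^*$ is also single-valued wherever it is nonempty. This gives differentiability of $f$ on $C$ and a well-defined map $\nabla f^{-1}$ on the range of $\nabla f$, which coincides with $\nabla f^*$ there.

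Next I would identify the range of $\nabla f$ with $C^*=\operatorname{int}\operatorname{dom} f^*$. One inclusion is easier: if $y=\nabla f(x)$ with $x\in C$, then $x\in\partial f^*(y)$, which places $y$ in $\operatorname{dom}\partial f^*\subseteq\operatorname{dom} f^*$; and because $\partial f^*(y)\ni x$ is a singleton (not a full recession direction), standard boundary-behavior arguments for closed convex functions rule out $y\in\operatorname{bd}\operatorname{dom} f^*$, so $y\in C^*$. The reverse inclusion $C^*\subseteq\nabla f(C)$ is where the \emph{essential smoothness} side of the Legendre hypothesis does the real work: for $y\in C^*$ the subdifferential $\partial f^*(y)$ is nonempty (as $y$ is interior to $\operatorname{dom} f^*$), so some $x$ satisfies $y\in\partial f(x)$; essential smoothness forbids such an $x$ from lying on $\operatorname{bd}\operatorname{dom} f$ (at boundary points of $\operatorname{dom} f$, a Legendre function has $\|\nabla f\|\to\infty$), forcing $x\in C$ and hence $y\in\nabla f(C)$. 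Combining the two inclusions gives the bijection $\nabla f:C\to C^*$ with inverse $\nabla f^*$.

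Continuity in both directions then follows from a classical fact: the gradient of a finite convex function is continuous on the interior of its domain. Applied to $f$ this gives continuity of $\nabla f$ on $C$; applied to $f^*$ (after showing $f^*$ is differentiable on $C^*$, which we just did) it gives continuity of $\nabla f^*=(\nabla f)^{-1}$ on $C^*$. Finally, since $f^{**}=f$, everything established for the pair $(C,f)\to(C^*,f^*)$ can be re-run verbatim with roles swapped, which yields the ``if and only if'' for the Legendre property and confirms that $(C,f)$ is the Legendre conjugate of $(C^*,f^*)$.

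The main obstacle I anticipate is the surjectivity step $\nabla f(C)=C^*$, specifically the use of essential smoothness to exclude boundary $x$. One has to know---or separately prove---that an essentially smooth closed convex function cannot have a finite subgradient at a boundary point of its domain, and that an essentially strictly convex function cannot have its conjugate fail smoothness on $C^*$. Both are standard but nontrivial pieces of Rockafellar's machinery (the equivalences between essential smoothness, essential strict convexity, and the Legendre property via conjugacy), and in a self-contained write-up these equivalences would be the technical heart of the argument rather than the Fenchel--Young inversion itself.
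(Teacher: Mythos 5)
The paper does not contain a proof of this statement: it is cited verbatim as Theorem~26.5 of Rockafellar's \emph{Convex Analysis} and used as a black box, so there is no in-paper argument to compare yours against. Your sketch broadly tracks Rockafellar's own development in Section~26, and you correctly flag that the technical core is the equivalence between ``Legendre type,'' ``essentially smooth and essentially strictly convex,'' and ``$\partial f$ single-valued and one-to-one,'' which you would have to supply separately.

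There is, however, a genuine gap in your second step. You claim that injectivity of $\partial f$ as a relation already forces $\partial f(x)$ to be a singleton, arguing that otherwise ``composing the relation with its inverse would fail injectivity.'' This is false: a multivalued map can take pairwise disjoint values (hence be one-to-one as a relation) while still being multivalued at some point, and composing with the inverse does not break anything. For a concrete example, take $f(x)=|x|+x^2/2$ on $\mathbb{R}$: then $\partial f(0)=[-1,1]$, while $\partial f(x)=\{x+1\}$ for $x>0$ and $\partial f(x)=\{x-1\}$ for $x<0$, so the values of $\partial f$ are pairwise disjoint yet $\partial f(0)$ is an interval, and indeed $f$ fails to be of Legendre type because it is not differentiable at $0$. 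What rescues the theorem is that Rockafellar's Corollary~26.3.1 actually requires $\partial f$ to be \emph{single-valued and} one-to-one; the Definition preceding the theorem in the paper quotes it imprecisely as just ``one-to-one,'' and your sketch inherits that imprecision. With single-valuedness restored to the hypothesis, your remaining route --- Fenchel--Young inversion to get $(\partial f)^{-1}=\partial f^*$, exclusion of boundary points via essential smoothness and boundedness of subgradients to obtain $\nabla f(C)=C^*$, continuity from the continuity of gradients of convex functions on the interior of the domain, and symmetry via $f^{**}=f$ --- is the right sequence of ideas, modulo the ``standard but nontrivial'' equivalences you explicitly acknowledge leaving unproven.
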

Applying those results to the function $q(x)$ and $p(x)$ in assumption~\ref{new assumptions on p}, we prove the lemma~\ref{lem:p_properties}.
\begin{proof}[Proof of Lemma~\ref{lem:p_properties}]
    Since $q(x)\in C^2$ and $q''(x)>0$, $q'(x)$ is increasing in its domain and $\operatorname{cl} q$ is a convex function of Legendre type. Theorem~\ref{theorem: legendre conjugate} indicates that $p(x)$ is a convex function of Legendre type and that
    $p'=(q')^{-1}$. Thus we have
    \begin{equation}
    \label{equ in lemma 3.1}
        (-\infty,0)\subseteq \operatorname{im}q'=\operatorname{int}(\operatorname{dom}p),\quad  \operatorname{im}p'=\operatorname{int}(\operatorname{dom}(\operatorname{cl} q))= (0,+\infty).
    \end{equation}
    Also, by taking second derivative to $p(x)$ and $q(x)$, we have when $y=p'(x)$ and $y\in\operatorname{int}(\operatorname{dom}(\operatorname{cl} q))$, 
    \begin{equation}
        p''(x)=\frac{1}{q''(y)}.
    \end{equation}
    This indicates that $p''(x)$ is strict positive in its domain and locally Lipschitz by the Lipschitzness of $q''(x)$. Hence, $p'(x)$ is increasing in its domain and combined with~\eqref{equ in lemma 3.1}, we have $\lim_{x\rightarrow -\infty} p'(x)=0$.
\end{proof}

\section{Proof of main theorems}
\subsection{Bias in entropic optimal transport: proof of~\ref{proposition: ROT}}
First of all, let us restate the regularized optimal transport settings in ~\cite{KlaTamMun20}. The optimal transportation is written as:
\begin{equation}
\label{Klatt OT formulation}
\bm{\pi}^\star =\argmin_{\bm{\pi}\in\mathbb{R}^{N\times N}} \langle\bm{c},\bm{\pi}\rangle , \text{s.t.} \sum_{i=1}^N  \pi_{i,j}=s_j, \ \sum_{j=1}^N \pi_{i,j}=t_i,\ \pi_{i,j}\geq 0.
\end{equation}
The entropic regularized optimal transport with the entropic function as $f(x)=x\log x-x$ and a regularized $\lambda$ is formularized as:
\begin{equation}
\label{Klatt ROT formulation}
\bm{\pi}_\lambda=\argmin_{\bm{\pi}\in\mathbb{R}^{N\times N}} \langle\bm{c},\bm{\pi}\rangle +\lambda \sum_{i,j=1}^Nf(\pi_{i,j}) ,  \ \text{s.t.} \sum_{i=1}^N  \pi_{i,j}=s_j, \ \sum_{j=1}^N \pi_{i,j}=t_i.
\end{equation}

We restrict our analysis for the empirical problems to the unique solution case, i.e., $\bm{\pi}^\star $ is a singleton, and the one-sided case where  only $\bm{t}$ is replaced by its empirical density $\bm{t}_n$. Let $\bm{\pi}_{\lambda,n} $ denote the empirical regularized optimal transport plan and $\bm{\pi}(\bm{t}_n,\bm{s})$ the empirical true optimal plan. Let $\lambda=\lambda_n$ be a decreasing sequence of $n$.

We first examine the small regularization regime, i.e.,  $\lambda \log (\sqrt{n})\rightarrow 0$.  We prove~\eqref{equ: ROT expansion} by showing that the intrinsic distance $d(\bm{\pi}_{\lambda,n},\bm{\pi}(\bm{t}_n,\bm{s}))=O_p(\epsilon(\lambda))\sim o_p(1\sqrt{n})$.  Note that when $\bm{\pi}^\star $ is a degenerate vertex, $\bm{\bm{\pi}(\bm{t_n},s) }$ can be a convex polytope and $d(\bm{\pi}_{\lambda,n},\bm{\pi}(\bm{t}_n,\bm{s}))$ denotes the minimal distance between $\bm{\pi}_{\lambda, n}$ and the vectors in the set $\bm{\pi}(\bm{t_n},\bm{s})$. Consequently,  the regularized optimal transportation plan inherits the non-Gaussianity from $\bm{\pi}(\bm{t_n},s) $.

In the large regularization regime, where $\lambda \log (\sqrt{n})\rightarrow +\infty$, we prove our results under the special case where $\bm{\pi}^\star $ is non-generate. Under this condition, we demonstrate that  $(\bm{\pi}_{\lambda,n}-\bm{\pi}^\star )=O_p ( \bm{E}_\lambda)$ implying that $\sqrt{n}(\bm{\pi}_{\lambda,n}-\bm{\pi}^\star )$ diverges with probability $1$. Such a divergence behavior should be generalizable to the general generate case.

\subsubsection{The small regularization regime: $\lambda \log (\sqrt{n})\rightarrow 0$}
We introduce serval lemmas that describe the properties of the solutions for~\eqref{Klatt OT formulation} and~\eqref{Klatt ROT formulation} and are key to the proof for~\ref{proposition: ROT}. 
\begin{lemma}
\label{key lemma 1 for 2.1}
    Let  $\bm{\pi}^\star _0$ be one of the optimal plan for~\eqref{Klatt OT formulation} and $\bm{\pi}_\lambda$ be the unique optimal plans defined in~\eqref{Klatt ROT formulation}.  Let $\Delta \bm{\pi}=\bm{\pi}_\lambda-\bm{\pi}^\star _0$, we have 
    \begin{equation}
        \sum_{(i,j)\in\text{supp}(\bm{\pi }_0^\star )^c }f(\Delta\pi_{i,j})\leq - \frac{\eta_0}{\lambda}\sum_{(i,j)\in\text{supp}(\bm{\eta }^\star ) } \Delta\pi_{i,j}-\sum_{(i,j) \in \text{supp}(\bm{\pi}^\star _0)}f'((\bm{\pi}^\star _0)_{i,j})\Delta\pi_{i,j},
    \end{equation}
    where  $\eta_0>0$ is a constant  depending only on $\bm{c}$ and $\bm{\eta}^\star $ is the centroid of the dual optimal face (defined in~\eqref{optimality condition}) 
 for~\eqref{Klatt OT formulation}.
\end{lemma}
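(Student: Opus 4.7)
My plan is to combine the variational optimality of $\bm{\pi}_\lambda$ with linear programming duality for the unregularized problem, then exploit the complementary-slackness structure between $\bm{\pi}^\star_0$ and $\bm{\eta}^\star$. The proof will proceed in three short steps, each following from a standard tool (convexity, LP duality, tangent-line inequality).

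First, comparing $\bm{\pi}_\lambda$ to the feasible competitor $\bm{\pi}^\star_0$ in~\eqref{Klatt ROT formulation} gives
\begin{equation*}
\langle \bc, \Delta\bm{\pi}\rangle \;\leq\; \lambda \sum_{i,j}\bigl[\,f((\bm{\pi}^\star_0)_{ij}) - f((\bm{\pi}_\lambda)_{ij})\,\bigr].
\end{equation*}
Next, I would rewrite the left side using LP duality for~\eqref{Klatt OT formulation}. Writing $c_{ij}=u^\star_i+v^\star_j+\eta^\star_{ij}$ with $\bm{\eta}^\star\geq\bm 0$ and using that $\Delta\bm{\pi}$ has vanishing row- and column-sums (both $\bm{\pi}_\lambda$ and $\bm{\pi}^\star_0$ satisfy the same marginals), the $\bm u^\star,\bm v^\star$ contributions cancel, leaving $\langle\bc,\Delta\bm{\pi}\rangle = \langle\bm{\eta}^\star,\Delta\bm{\pi}\rangle$. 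Because $\bm{\eta}^\star$ is the centroid of the dual optimal face, strict complementary slackness forces $(\bm{\pi}^\star_0)_{ij}=0$ whenever $(i,j)\in\mathrm{supp}(\bm{\eta}^\star)$, so $\Delta\pi_{ij} = (\bm{\pi}_\lambda)_{ij}\geq 0$ on that support. Setting $\eta_0:=\min_{(i,j)\in\mathrm{supp}(\bm{\eta}^\star)}\eta^\star_{ij}>0$ yields
\begin{equation*}
\eta_0 \sum_{(i,j)\in\mathrm{supp}(\bm{\eta}^\star)} \Delta\pi_{ij} \;\leq\; \langle\bc,\Delta\bm{\pi}\rangle.
\end{equation*}

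Finally, I would bound the entropy difference by splitting the sum according to $\mathrm{supp}(\bm{\pi}^\star_0)$. On the support, the tangent-line inequality for the convex $f$ gives $f((\bm{\pi}^\star_0)_{ij})-f((\bm{\pi}_\lambda)_{ij})\leq -f'((\bm{\pi}^\star_0)_{ij})\Delta\pi_{ij}$; off the support, $(\bm{\pi}^\star_0)_{ij}=0$ together with $f(0)=0$ makes the corresponding term exactly $-f(\Delta\pi_{ij})$. Substituting these into the inequality of the previous step, dividing by $\lambda$, and rearranging delivers the claimed bound.

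The main obstacle is largely bookkeeping: one must be careful to point the convexity inequality in the right direction on $\mathrm{supp}(\bm{\pi}^\star_0)$ and to verify the sign $\Delta\pi_{ij}\geq 0$ on $\mathrm{supp}(\bm{\eta}^\star)$ so that the $\eta_0$ lower bound is valid. The existence of a strictly positive $\eta_0$ depends on Slater's condition (Assumption~\ref{assumption on LP}), which guarantees that the dual optimal face is bounded and hence admits a centroid that is strictly complementary with every primal optimum; this is also where the dependence of $\eta_0$ on $\bc$ alone, and not on $\bm t$ or $\bm s$, enters.
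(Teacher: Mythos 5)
Your proposal follows the same route as the paper: the feasibility comparison in the regularized problem, rewriting $\langle\bm c,\Delta\bm\pi\rangle$ via the dual variable $\bm\eta^\star$ using the vanishing marginals of $\Delta\bm\pi$, complementary slackness to get $\Delta\pi_{ij}=(\bm\pi_\lambda)_{ij}\geq 0$ on $\mathrm{supp}(\bm\eta^\star)$, and the tangent-line/equality split of the entropy difference over $\mathrm{supp}(\bm\pi^\star_0)$ and its complement. The bookkeeping and signs are all correct.

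There is one gap you should close: you \emph{define} $\eta_0:=\min_{(i,j)\in\mathrm{supp}(\bm\eta^\star)}\eta^\star_{ij}$, which a priori depends on which dual face is optimal, and hence on $\bm t$ and $\bm s$. The lemma requires $\eta_0$ to depend only on $\bm c$, and your closing sentence gestures at this but does not prove it. The paper closes the gap by noting that $\bm\eta^\star$ is the centroid (or a vertex) of a face of the fixed dual feasible polytope $\{\bm\eta\geq 0:\ \bm\eta=\bm c-\bm A_0^\top\bm\lambda \text{ for some }\bm\lambda\}$, which depends only on $\bm c$; averaging over the at most $N$ vertices of that face gives
\begin{equation*}
\min_{(i,j)\in\mathrm{supp}(\bm\eta^\star)}\eta^\star_{ij}\;\geq\;\frac{1}{N}\,\min_{k}\ \min_{(i,j)\in\mathrm{supp}(\bm v_k)}(\bm v_k)_{ij}\;=:\;\eta_0,
\end{equation*}
where $\{\bm v_k\}$ enumerates the vertices of the dual polytope. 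This $\eta_0$ depends only on $\bm c$. (Also, a small terminological note: you invoke \emph{strict} complementary slackness to conclude $(\bm\pi^\star_0)_{ij}=0$ on $\mathrm{supp}(\bm\eta^\star)$, but ordinary complementary slackness between an arbitrary primal optimum $\bm\pi^\star_0$ and the dual optimum $\bm\eta^\star$ already gives this; strictness is not needed here.)
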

\begin{proof}
    Since $\bm{\pi}_0^\star $ is also feasible to~\eqref{Klatt ROT formulation}, we have
    \begin{equation}
\label{equ: ROT compare}
    \langle\bm{c}, \Delta{\bm{\pi}}\rangle +\lambda \sum_{i,j=1}^N(f((\bm{\pi}_{\lambda})_{i,j})-f((\bm{\pi}^\star _0)_{i,j}))\leq 0, \quad \sum_{i=1}^N \Delta \pi_{i,j}=0, \ \sum_{j=1}^N \Delta \pi_{i,j}=0.
\end{equation}
When $(i,j)\in \text{supp}(\bm{\pi }_0^\star )^c$, we have
\begin{equation}
f((\bm{\pi}_{\lambda})_{i,j})-f((\bm{\pi}^\star _0)_{i,j})=f(\Delta\pi_{i,j})
\end{equation}
When $(i,j)\in \text{supp}(\bm{\pi }_0^\star )$, by the convexity of $f(x)$ on $[0,+\infty)$ we have
\begin{equation}
    f((\bm{\pi}_{\lambda})_{i,j})-f((\bm{\pi}^\star _0)_{i,j})\geq f'((\bm{\pi}^\star _0)_{i,j})\Delta\pi_{i,j}
\end{equation}
By the definition of dual variable $\bm{\eta}$ in~\eqref{optimality condition} for the problem~\eqref{Klatt OT formulation}, for every feasible $\bm{\eta}$ we have 
\begin{equation}
\langle\bm{c},\Delta\bm{\pi}\rangle=\langle\bm{\eta},\Delta\bm{\pi}\rangle=\sum_{(i,j)\in\text{supp}(\bm{\eta }) }\eta_{i,j}\Delta\pi_{i,j}.
\end{equation}
Let $\{\bm{v}_i,\ i\in[N]\}$ be the vertices of the feasible polytope for $\bm{\eta}$. Since $\bm{\eta}^\star $ is the centroid of a face or is a vertex, we have
\begin{equation}
    \min_{i\in \text{supp}(\bm{\eta }^\star )}\eta^\star _i\geq \frac{1}{N} \min_i\min_{j\in \text{supp}(\bm{v}_i)}v_{i,j}\coloneqq\eta_0.
\end{equation}
Note that $\eta_0$ only depends on the geometry of the dual feasible polytope which is independent of $\bm{t}$ and $\bm{s}$.
Combining the facts above, we have the inequality in the lemma proved.
\end{proof}
Another technique employed in our analysis involves establishing bounds for Legendre-type convex functions.
\begin{lemma}
\label{key lemma 2 for 2.1}

    Let $f(\bm{x}):\mathcal{C}\rightarrow\mathbb{R}$ be a convex function of Legendre type and $g=f^* $. 
    
    Let $\bm{x}_0\in \mathcal{C}$. If $f(\bm{x}_0)\leq \langle\bm{\alpha},\bm{x}_0 \rangle$, for every $\bm{y}_0\in \mathcal{C}^* $, we have  
    \begin{equation}
        \langle\bm{y_0}-\bm{\alpha},\bm{x}_0\rangle\leq g(\bm{y_0}).
    \end{equation}
    
\end{lemma}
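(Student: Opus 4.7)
The statement is essentially a one-line consequence of the definition of the convex conjugate, so my plan is to derive it directly from the Fenchel--Young inequality specialized at the point $\bm{x}_0$. I would not invoke any of the deeper Legendre-type machinery from Theorem~\ref{theorem: legendre conjugate}: the hypothesis that $f$ is of Legendre type is really only there to ensure $g = f^*$ is well-defined and finite on $\mathcal{C}^*$, but the inequality itself holds for any proper convex $f$.

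More precisely, the plan is to first recall that by definition of the convex conjugate,
\begin{equation*}
	g(\bm{y}_0) \;=\; \sup_{\bm{x}\in\mathcal{C}} \bigl\{ \langle \bm{x}, \bm{y}_0\rangle - f(\bm{x})\bigr\}\,.
\end{equation*}
Next, since $\bm{x}_0\in\mathcal{C}$ is feasible in this supremum, specializing the bound at $\bm{x} = \bm{x}_0$ gives
\begin{equation*}
	g(\bm{y}_0) \;\geq\; \langle \bm{x}_0, \bm{y}_0\rangle - f(\bm{x}_0)\,.
\end{equation*}
Then I would plug in the hypothesis $f(\bm{x}_0) \leq \langle \bm{\alpha}, \bm{x}_0\rangle$, which upon negation yields $-f(\bm{x}_0) \geq -\langle \bm{\alpha}, \bm{x}_0\rangle$, so
\begin{equation*}
	g(\bm{y}_0) \;\geq\; \langle \bm{x}_0, \bm{y}_0\rangle - \langle \bm{\alpha}, \bm{x}_0\rangle \;=\; \langle \bm{y}_0 - \bm{\alpha}, \bm{x}_0\rangle\,,
\end{equation*}
which is the desired conclusion.

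There is essentially no obstacle here; the only mild subtlety is verifying that the suprandum defining $g(\bm{y}_0)$ is genuinely attained or at least bounded below by the value at $\bm{x}_0$, which is trivial once one notes that $\bm{x}_0 \in \mathcal{C} = \dom f$. I would write this as a single short paragraph in the paper, with perhaps one sentence to remind the reader of the Fenchel--Young inequality. The lemma's real content lies not in its proof but in how the authors intend to apply it later (presumably to translate the cost-bound in Lemma~\ref{key lemma 1 for 2.1} into a pointwise bound on $\Delta\pi$ via the conjugate of the entropy-like function $f$).
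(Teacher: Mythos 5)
Your proof is correct, and it takes a route that is slightly different from, and slightly more elementary than, the paper's. You apply the definition of $g = f^*$ directly and specialize the supremum at $\bm{x} = \bm{x}_0$, which is the forward direction of the Fenchel--Young inequality and requires nothing beyond $\bm{x}_0 \in \dom f$. The paper instead invokes Theorem~\ref{theorem: legendre conjugate} to obtain the biconjugacy $f = g^*$, writes $f(\bm{x}_0) = \max_{\bm{y}\in\mathcal{C}^*}\{\langle \bm{x}_0,\bm{y}\rangle - g(\bm{y})\}$, and then specializes this maximum at $\bm{y} = \bm{y}_0$; combining with the hypothesis $\langle\bm{\alpha},\bm{x}_0\rangle \geq f(\bm{x}_0)$ gives the same inequality. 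Both derivations are one-liners producing identical conclusions, but yours avoids biconjugacy entirely and therefore, as you correctly observe, holds for any proper convex $f$, not just Legendre-type functions --- the Legendre hypothesis in the lemma statement is only there so that the surrounding machinery (one-to-one gradient maps, $p' = (q')^{-1}$, etc.) is available when the lemma is applied in Proposition~\ref{prop1 for 2.1}. Your remark about where the lemma's content actually lies (in the application, not the proof) is accurate.
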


\begin{proof}
By Theorem~\ref{theorem: legendre conjugate}, $g(x)$ is  a convex function of Legendre type and $f=g^* $.
Therefore, for  every $\bm{y}_0\in \mathcal{C}^* $, we have 
\begin{equation}
    \langle\bm{\alpha},\bm{x}_0 \rangle\geq f(\bm{x}_0)=\max_{\bm{y}\in \mathcal{C}^* }\langle \bm{x}_0,\bm{y} \rangle-g(\bm{y})\geq \langle \bm{x}_0,\bm{y}_0 \rangle-g(\bm{y}_0).
\end{equation}
This conclude the proof.
\end{proof}

\begin{lemma}
\label{auxilary lemma for 2.1}
    Let $\bm{\pi}_n^\star \in \bm{\pi}(\bm{t}_n,\bm{s})$ be defined as 
    \begin{equation}
\label{equ: optimal center2}
\bm{\pi}^\star _n=\argmin_{\bm{\pi}\in\bm{\pi}(\bm{t}_n,\bm{s})} \sum_{i,j=1}^Nf(\pi_{i,j}).
\end{equation}
Let $\Delta\bm{\pi}\in \mathbb{R}^{N\times N}$ satisfy
\begin{equation}
\label{ot linear constraints}
    \sum_{i=1}^N \Delta \pi_{i,j}=0, \ \sum_{j=1}^N \Delta \pi_{i,j}=0.
\end{equation}
For every such vector $\Delta \bm{\pi}$, we have 
\begin{equation}
    \sum_{(i,j) \in \text{supp}(\bm{\pi}^\star _n)}f'((\bm{\pi}^\star _n)_{i,j})\Delta\pi_{i,j}=\sum_{(i,j) \in \text{supp}(\bm{\pi}^\star _n)^c}\alpha_{i,j}\Delta\pi_{i,j},
\end{equation}
with a uniform vector $\bm{\alpha}$ that only depends on $\bm{\pi}^\star _n$ and $\|\bm{\alpha}\|\leq O_p(\log n)$.
\end{lemma}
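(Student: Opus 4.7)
My plan is to derive the identity directly from the Karush--Kuhn--Tucker conditions for the entropic-center problem~\eqref{equ: optimal center2}, and then to bound $\|\bm{\alpha}\|$ using the divergence of $f'$ at $0$ combined with a connectivity argument on the bipartite support graph of $\bm{\pi}_n^\star$.

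First, the feasible set $\bm{\pi}(\bm{t}_n,\bm{s})$ is the optimal face of the unregularized OT problem with marginals $(\bm{t}_n,\bm{s})$; equivalently, it is the set of plans satisfying the two marginal constraints and supported on the complementary-slackness support $S_n \subseteq [N]^2$. Since $f'(x)=\log x \to -\infty$ as $x \to 0^+$, the strict convexity of the objective forces the unique minimizer $\bm{\pi}_n^\star$ into the relative interior of this face, so $\operatorname{supp}(\bm{\pi}_n^\star)=S_n$ and the inequality constraints are inactive at the optimum. Writing KKT for the marginal equality constraints with multipliers $\bm{\lambda},\bm{\mu}\in\RR^N$ yields
\begin{equation*}
f'((\bm{\pi}_n^\star)_{i,j}) = \lambda_i + \mu_j \qquad \text{for every } (i,j)\in S_n.
\end{equation*}
Because $\Delta\bm{\pi}$ has vanishing marginals,
\begin{equation*}
0 \;=\; \sum_{i,j}(\lambda_i+\mu_j)\Delta\pi_{i,j} \;=\; \sum_{(i,j)\in S_n}(\lambda_i+\mu_j)\Delta\pi_{i,j} + \sum_{(i,j)\notin S_n}(\lambda_i+\mu_j)\Delta\pi_{i,j}.
\end{equation*}
Substituting the KKT relation into the first sum yields the identity claimed in the lemma with $\alpha_{i,j} := -(\lambda_i+\mu_j)$ for $(i,j)\notin S_n$; this vector depends only on $\bm{\pi}_n^\star$ through the KKT system.

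The main obstacle is to show $\|\bm{\alpha}\|=O_p(\log n)$. On the support, $|\lambda_i+\mu_j| = |\log((\bm{\pi}_n^\star)_{i,j})|$, so it suffices to give a lower bound $1/\operatorname{poly}(n)$ on the entries of $\bm{\pi}_n^\star$ with high probability. I will obtain this by applying Lemma~\ref{lemma for 2.1} to~\eqref{equ: optimal center2} restricted to the optimal face, using that the vertices of this face are basic feasible solutions whose nonzero entries are linear combinations of $(\bm{t}_n,\bm{s})$ with bounded integer coefficients, and hence are $O_p(1)$ in magnitude with minimum positive entries of order at least $1/\operatorname{poly}(n)$ by the CLT for $\bm{t}_n$. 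To propagate the bound from pairs in $S_n$ to pairs $(i,j)\notin S_n$, I exploit the gauge freedom $(\bm{\lambda},\bm{\mu})\mapsto (\bm{\lambda}+c\mathbf{1},\bm{\mu}-c\mathbf{1})$: on each connected component of the bipartite graph with edge set $S_n$, fix a reference node and propagate $\lambda_i,\mu_j$ along a spanning tree of length at most $2N$, which keeps $|\lambda_i|,|\mu_j|=O(\log n)$. For cross-component pairs $(i,j)\notin S_n$, the independent shifts on the two components can be chosen to ensure $|\lambda_i+\mu_j|=O(\log n)$ as well. Since $|S_n^c|\le N^2$ is a fixed constant, these entrywise bounds aggregate to the stated $O_p(\log n)$ bound on $\|\bm{\alpha}\|$, completing the proof.
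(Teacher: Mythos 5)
Your proposal follows essentially the same route as the paper for the polytope case: derive $f'((\bm\pi^\star_n)_{ij})=\lambda_i+\mu_j$ on $S_n$ from KKT, use the vanishing marginals of $\Delta\bm\pi$ to move the sum to $S_n^c$ with $\alpha_{ij}=-(\lambda_i+\mu_j)$, and bound $\|\bm\alpha\|$ by controlling $\max_{(i,j)\in S_n}|f'((\bm\pi^\star_n)_{ij})|$ via Lemma~\ref{lemma for 2.1}. The main stylistic difference is that you run the KKT argument uniformly, whereas the paper separates the singleton case (where it instead invokes Lemma~\ref{lemma: I_0 is enough} to rewrite the sum directly, and lower-bounds the active entries by $C'\|\bm t_n-\bm t\|$ via Assumption~\ref{assumption:G}) from the non-degenerate polytope case. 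Your unification is valid---in the singleton case the KKT system $\lambda_i+\mu_j=f'((\bm\pi^\star_n)_{ij})$ on $S_n$ is solvable because $S_n$ is a forest in the bipartite graph---but you should say this explicitly, since consistency of that system is not automatic when $S_n$ contains cycles (it then follows from the optimality of $\bm\pi^\star_n$, not from the graph structure). Your spanning-tree propagation is a genuinely more explicit account of a step the paper glosses over with the phrase ``$\bm\phi,\bm\psi$ linearly with respect to $f'(\bm\pi^\star_n)$''; that is a useful clarification. Two minor imprecisions to tighten: first, the ``independent shifts can be chosen'' phrasing for cross-component pairs suggests a tuning step that is neither needed nor generally possible (there are $K-1$ free shifts but up to $O(N^2)$ cross-component constraints); the correct observation is that after fixing any per-component normalization the tree propagation already forces $|\lambda_i|,|\mu_j|=O(\log n)$ uniformly, so cross-component sums are automatically $O(\log n)$. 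Second, your CLT lower bound on the positive entries of $\bm\pi^\star_n$ implicitly requires Assumption~\ref{assumption:G} (that $\bm G$ places no mass at $\bm 0$), which should be cited; the paper is equally terse here, so this is a shared gap rather than a flaw unique to your argument.
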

\begin{proof}
We perform separate proofs for two distinct cases:when $\bm{\pi}(\bm{t}_n,\bm{s})$ is a singleton and when it is a polytope.

\textbf{1. $\bm{\pi}(\bm{t}_n,\bm{s})$ is a singleton: }

In this case, $\bm{\pi}^\star _n $ is the unique empirical optimal transport plan. Since $\bm{\pi}_n^\star $ and $\Delta \bm{\pi}$ are constrained with the same linear operator,  we can apply Lemma~\ref{lemma: I_0 is enough} to $\Delta \bm{\pi}$ and conclude that there exists a tensor $L_{i,j,k,l}'$ with $(i,j)\in \text{supp}(\bm{\pi}^\star _n)$ and $(k,l)\in \text{supp}(\bm{\pi}^\star _n)^c$ such that
\begin{equation}
\sum_{(i,j) \in \text{supp}(\bm{\pi}^\star _n)}f'((\bm{\pi}^\star _n)_{i,j})\Delta\pi_{i,j}=    \sum_{(k,l)\in \text{supp}(\bm{\pi}^\star _n)^c} \left(\sum_{(i,j) \in \text{supp}(\bm{\pi}^\star _n)}f'((\bm{\pi}^\star _n)_{i,j})L'_{i,j,k,l}\right) \Delta \bm{\pi}_{k,l}.
\end{equation}
Therefore, $\alpha_{k,l}=\sum_{(i,j) \in \text{supp}(\bm{\pi}^\star _n)}f'((\bm{\pi}^\star _n)_{i,j})L'_{i,j,k,l}$ and there exists a constant $C$, $$\|\alpha\|\leq C\max_{(i,j)\in \text{supp}(\bm{\pi}^\star _n)}\|f'((\bm{\pi}^\star _n)_{i,j})\|.$$
Since $(\bm{\pi}^\star _n)_{i,j}\geq C'\|\bm{t}_n-\bm{t}\| $ with a constant $C'$ ,  we have $\|f'((\bm{\pi}^\star _n)_{i,j})\|=O_p(\log n)$.

\textbf{2. $\bm{\pi}(\bm{t}_n,\bm{s})$ is a non-degenerate polytope:}

Let $S=\cup_{\bm{\pi}\in \bm{\pi}(\bm{t}_n,s)}\text{supp}(\bm{\pi})$. 
Since $f'(x)$ diverges on the boundary of $\bm{\pi}(\bm{t}_n,s)$, we have $\text{supp}(\bm{\pi}^\star _n)=S$. 
The first-order optimality condition for~\eqref{equ: optimal center2}  writes 
\begin{equation}
    f'((\bm{\pi}_n^\star )_{i,j})=\phi_i+\psi_j \text{ for }(i,j)\in S, 
\end{equation}
with some vector $\bm{\phi},\bm{\psi}\in \mathbb{R}^N$ linearly with respect to $f'((\bm{\pi}_n^\star ))$.

Therefore,
\begin{equation}
    \sum_{(i,j) \in \text{supp}(\bm{\pi}^\star _n)}f'((\bm{\pi}^\star _n)_{i,j})\Delta\pi_{i,j}= \sum_{(i,j) \in \text{supp}(\bm{\pi}^\star _n)}(\phi_i+\psi_j)\Delta\pi_{i,j}=-\sum_{(i,j) \in \text{supp}(\bm{\pi}^\star _n)^c}(\phi_i+\psi_j)\Delta\pi_{i,j}.
\end{equation}
This is because of $\sum_{i,j=1}^N(\phi_i+\psi_j)\Delta\pi_{i,j}=0$ by the linear constraints for $\Delta\bm{\pi}$.  
Give in equation~\eqref{appendix: xb' expression}, the vertices of $\bm{\pi}(\bm{t}_n,s)$ are feasible basis solutions. Applying~\ref{lemma for 2.1} with $\mathcal{P}=\bm{\pi}(\bm{t}_n,s)-\bm{\pi}^\star $ , $m=|S|$, and $\bm{x}_0=\bm{\pi}^\star $, we conclude that$\|f'((\bm{\pi}^\star _n)_{i,j})\|=O_p(\log n)$ so that $|\alpha_{i,j}|=|\phi_i+\psi_j|=O_p(\log n)$.

\end{proof}
Utilizing the lemmas above, we can prove the small $\lambda$ regime in proposition~\ref{proposition: ROT}
\begin{proposition}
\label{prop1 for 2.1}
    In the regime of $\lambda\log(\sqrt{n})\rightarrow 0$, we have $d(\bm{\pi}_{\lambda,n},\bm{\pi}(\bm{t}_n,\bm{s}))=O_p(\epsilon(\lambda))$.
\end{proposition}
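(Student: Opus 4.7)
The plan is to construct $\tilde{\bm\pi} \in \bm\pi(\bm t_n, \bm s)$ within $O_p(\epsilon(\lambda))$ of $\bm\pi_{\lambda,n}$. Let $\bm\pi_n^\star$ be the maximum-entropy element of $\bm\pi(\bm t_n, \bm s)$ from Lemma~\ref{auxilary lemma for 2.1}. By strict complementary slackness for the empirical OT (which applies since $\bm t_n > 0$ with probability tending to one), $S := \mathrm{supp}(\bm\pi_n^\star)$ equals $\mathrm{supp}(\bm\eta_n^\star)^c$, where $\bm\eta_n^\star$ is the centroid of the corresponding dual optimal face. Set $\Delta\bm\pi := \bm\pi_{\lambda,n} - \bm\pi_n^\star$; this vector lies in the kernel of the marginal constraints and is coordinatewise nonnegative on $S^c$ since $\bm\pi_n^\star$ vanishes there and $\bm\pi_{\lambda,n} \geq 0$.

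The next step chains the three lemmas. Lemma~\ref{key lemma 1 for 2.1} applied with $\bm\pi_0^\star = \bm\pi_n^\star$ gives
\begin{equation*}
\sum_{(i,j) \in S^c} f(\Delta\pi_{ij}) + \frac{\eta_0}{\lambda} \sum_{(i,j)\in S^c} \Delta\pi_{ij} \leq -\sum_{(i,j) \in S} f'((\bm\pi_n^\star)_{ij}) \Delta\pi_{ij},
\end{equation*}
and Lemma~\ref{auxilary lemma for 2.1} rewrites the right-hand side as $-\sum_{S^c} \alpha_{ij} \Delta\pi_{ij}$ with $\|\bm\alpha\|_\infty = O_p(\log n)$, so
\begin{equation*}
\sum_{(i,j)\in S^c} f(\Delta\pi_{ij}) \leq -\sum_{(i,j)\in S^c} \left( \frac{\eta_0}{\lambda} + \alpha_{ij} \right) \Delta\pi_{ij}.
\end{equation*}
This is exactly the hypothesis of Lemma~\ref{key lemma 2 for 2.1} for the separable Legendre function $F(\bm x) = \sum_k f(x_k)$ on $\bm x_0 = \Delta\bm\pi|_{S^c} \geq 0$. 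Since $f^*(y) = e^y$, the choice $\bm y_0 = -(\eta_0/\lambda)\bm 1 - \bm\alpha + t\bm 1$ yields
\begin{equation*}
t \sum_{(i,j)\in S^c} \Delta\pi_{ij} \leq \sum_{(i,j)\in S^c} e^{-\eta_0/\lambda - \alpha_{ij} + t} \leq N^2 \, e^{\,t - \eta_0/\lambda + \|\bm\alpha\|_\infty}.
\end{equation*}
Setting $t = 1$ and invoking $\lambda \log n \to 0$, the random $O_p(\log n)$ slack is absorbed by $\eta_0/\lambda$, giving $\|\Delta\bm\pi|_{S^c}\|_1 = O_p(e^{-\eta_0/(2\lambda)}) =: O_p(\epsilon(\lambda))$, which decays exponentially fast in the sense $\lambda \log(1/\epsilon(\lambda)) = \eta_0/2$.

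Finally I convert this coordinate bound into a distance bound. Let $V := \{\bm\pi : \bm A\bm\pi = (\bm t_n, \bm s),\ \bm\pi|_{S^c} = 0\}$ be the affine hull of the optimal polytope, so that $V_0 := V - \bm\pi_n^\star \subset \ker \bm A$. Decompose $\Delta\bm\pi$ as $\bm\delta + \bm\zeta$ with $\bm\delta \in V_0$ and $\bm\zeta$ lying in a complementary subspace of $\ker\bm A$; a Lemma~\ref{lemma: I_0 is enough}-style argument, applied to a basis $I \subseteq S$ of a vertex of $\bm\pi(\bm t_n, \bm s)$, yields $\|\bm\zeta\| = O(\|\Delta\bm\pi|_{S^c}\|) = O_p(\epsilon(\lambda))$. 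The vector $\tilde{\bm\pi} := \bm\pi_n^\star + \bm\delta$ lies in $V$ and satisfies $\|\bm\pi_{\lambda,n} - \tilde{\bm\pi}\| = \|\bm\zeta\| = O_p(\epsilon(\lambda))$; since $\bm\pi_n^\star > 0$ strictly on $S$ and $\bm\delta$ is of the same small order, $\tilde{\bm\pi} \geq 0$ with probability tending to one, so complementary slackness with $\bm\eta_n^\star$ places $\tilde{\bm\pi}$ in $\bm\pi(\bm t_n, \bm s)$. The main obstacle is the Lemma~\ref{key lemma 2 for 2.1} step: extracting exponential decay from a merely linear penalty $\eta_0/\lambda$ relies crucially on $f^* = \exp$ and on the fact that $\lambda \log n \to 0$ keeps $\eta_0/\lambda$ dominant over the $O_p(\log n)$ slack from Lemma~\ref{auxilary lemma for 2.1}.
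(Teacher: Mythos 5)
Your chaining of Lemmas~\ref{key lemma 1 for 2.1}, \ref{auxilary lemma for 2.1} and \ref{key lemma 2 for 2.1} reproduces the paper's argument exactly through the bound $\|\Delta\bm\pi|_{S^c}\|_1 = O_p(\epsilon(\lambda))$, including the choice of dual test point $\bm y_0$ and the observation that $\lambda \log n \to 0$ makes the $\eta_0/\lambda$ term swallow the $O_p(\log n)$ slack from Lemma~\ref{auxilary lemma for 2.1}. The divergence is in the final step converting the coordinate bound on $\Delta\bm\pi|_{S^c}$ into a distance to the optimal face, and there your argument has a genuine gap.

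You construct $\tilde{\bm\pi} = \bm\pi_n^\star + \bm\delta = \bm\pi_{\lambda,n} - \bm\zeta$ and claim $\tilde{\bm\pi} \in \bm\pi(\bm t_n,\bm s)$ on the grounds that ``$\bm\pi_n^\star>0$ strictly on $S$ and $\bm\delta$ is of the same small order.'' But nothing in your argument controls $\|\bm\delta\|$; the only quantity you bound is $\|\bm\zeta\| = O_p(\epsilon(\lambda))$. When the empirical optimal face is a non-degenerate polytope, $V_0$ is a nontrivial subspace, $\Delta\bm\pi = \bm\pi_{\lambda,n}-\bm\pi_n^\star$ need not be small a priori, and its $V_0$-component $\bm\delta$ can be large, so the inequality $\tilde{\pi}_{ij} = (\pi_n^\star)_{ij} + \delta_{ij} > 0$ on $S$ does not follow. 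Equivalently, writing $\tilde{\bm\pi} = \bm\pi_{\lambda,n}-\bm\zeta$ does not help: you would need a lower bound on $(\pi_{\lambda,n})_{ij}$ for $(i,j)\in S$ that beats $\epsilon(\lambda)$, and no such bound is established. (Your ``Lemma~\ref{lemma: I_0 is enough}-style argument'' for bounding $\|\bm\zeta\|$ also implicitly uses that lemma outside its stated uniqueness hypothesis, though that part is repairable via a bounded right inverse of the restriction map on $\ker\bm A$.) The paper avoids the issue entirely: it observes that $\bm\pi_{\lambda,n}$ lies in the polytope $\bm\pi'(\bm t_n,\bm s) = \{\bm\pi\in\mathcal D : \pi_{ij}=\Delta\pi_{ij},\ (i,j)\in S^c\}$, which is a perturbation of the optimal face $\bm\pi(\bm t_n,\bm s)$ by the right-hand side $\Delta\bm\pi|_{S^c}$, and invokes Walkup's Lipschitzian property of polytopes to bound the Hausdorff distance by $C\|\Delta\bm\pi|_{S^c}\|$. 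This black-box bound delivers exactly the nonnegative nearby point that your explicit construction fails to certify. To repair your version, you would essentially need to reprove that Lipschitz property in this special case, or first show $\|\Delta\bm\pi\|$ itself (not just its $S^c$-restriction) tends to zero before attempting the positivity claim.
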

\begin{proof}
    Let $\bm{\pi}_n^\star $ be defined in Lemma~\ref{auxilary lemma for 2.1} as the solution to  a convex minimization problem over $\bm{\pi}(\bm{t}_n,\bm{s}))$. Let $\bm{\eta}^\star _n$ be a dual variable defined in equation~\eqref{optimality condition} that is also the centroid of the dual optimal face for $\bm{\pi}(\bm{t}_n,\bm{s})$.

     When $\bm{\pi}(\bm{t}_n,\bm{s})$ forms a non-degenerate polytope, Lemma~\ref{auxilary lemma for 2.1} establishes that $\bm{\pi}_n^\star $ is achieved within the relative interior of this polytope. Consequently, strict complementary slackness is satisfied between $\bm{\pi}_n^\star $ and $\bm{\eta}^\star _n$ , i.e.,
     \begin{equation}
         \label{equ: dual}
    \text{supp}(\bm{\eta }_n^\star )=\text{supp}(\bm{\pi}^{\star }_{n})^c.
     \end{equation}
    Let $\Delta \bm{\pi}=\bm{\pi}_{\lambda,n}-\bm{\pi}^\star _n$.
     Applying Lemma~\ref{key lemma 1 for 2.1} for $\bm{\pi}^\star _n$ and $\bm{\pi}_{\lambda,n}$ then we have
     \begin{equation}
        \sum_{(i,j)\in\text{supp}(\bm{\eta }_n^\star ) }f(\Delta\pi_{i,j})\leq - \frac{\eta_0}{\lambda}\sum_{(i,j)\in\text{supp}(\bm{\eta }^\star _n) } \Delta\pi_{i,j}-\sum_{(i,j) \in \text{supp}(\bm{\pi}^\star _n)}f'((\bm{\pi}^\star _n)_{i,j})\Delta\pi_{i,j}.
    \end{equation}
    Applying Lemma~\ref{auxilary lemma for 2.1} to $f'((\bm{\pi}^\star _n)_{i,j})$ in the inequality above we have:
    \begin{equation}
        \sum_{(i,j)\in\text{supp}(\bm{\eta }_n^\star ) }f(\Delta\pi_{i,j})\leq - \sum_{(i,j)\in\text{supp}(\bm{\eta }^\star _n) } (\frac{\eta_0}{\lambda}+\alpha_{i,j})\Delta\pi_{i,j},
    \end{equation}
    where $\|\alpha\|\leq O_p(\log n)=o_p(1/\lambda)$.

    Applying Lemma~\ref{key lemma 2 for 2.1} to the entropic regularizer $f$ with dual function $f^\star (y)=\exp(y)$, by taking $y_{0,i,j}= 1-\alpha_{i,j}- \frac{\eta_0}{\lambda}$, we have 
    \begin{equation}
        \sum_{(i,j)\in\text{supp}(\bm{\eta}_n) }\Delta\pi_{i,j}\leq \sum_{(i,j)\in\text{supp}(\bm{\eta}_n) }\exp(1-\alpha_{i,j}- \frac{\eta_0}{\lambda})\leq \exp(-\frac{C_0\eta_0}{\lambda}).
\end{equation}
Here $C_0$ is a constant independent of $n$, $\lambda$, and $\eta_0$.
Let $\mathcal{D}_n$ be the feasible set of program~\eqref{Klatt OT formulation} with $\bm{t}_n$.
We can write the optimal face $\bm{\pi}(\bm{t_n}, \bm{s})$ as
\begin{equation}
    \bm{\pi}(\bm{t_n}, \bm{s})=\{\bm{\pi}\in \mathcal{D}| \pi_{i,j} =0, (i,j)\in \text{supp} (\bm{\eta}_n^\star )\}.
\end{equation}
And  $\bm{\pi}_{\lambda,n}$ belongs to the convex set:
 \begin{equation}
   \bm{\pi}'(\bm{t_n,s})\coloneqq \{\bm{\pi}\in \mathcal{D}| \pi_{i,j} =\Delta\pi_{i,j}, (i,j)\in \text{supp} (\bm{\eta}_n^\star )\}.
\end{equation}
Therefore, 
due to the Lipschitzian property of polytopes proved in~\cite{walkup1969lipschitzian}, 
\begin{equation}
    d(\bm{\pi}_{\lambda,n},\bm{\pi}(\bm{t}_n,\bm{s}))\leq d_H(\bm{\pi}'(\bm{t_n,s}),\bm{\pi}(\bm{t}_n,\bm{s})))\leq C'\exp(-\frac{C_0\eta_0}{\lambda}),
\end{equation}
where $d_H(\cdot,\cdot)$ denotes the Hausdorff distance between two convex polytopes.
\end{proof}
\subsubsection{The large regularization regime: $\lambda \log (\sqrt{n})\rightarrow +\infty$}
For simplicity of the proof, we inexplicitly write the equality constraints in~\eqref{Klatt OT formulation} as
\begin{equation}
    \bm{A}_0\bm{\pi}=(\bm{t},\bm{s}),
\end{equation}
with a matrix $\bm{A}_0\in \mathbb{R}^{N^2\times 2N}$.
We consider the special case that $\bm{\pi}^\star $ is a non-degenerate basic solution (definition in Section~\ref{sec:preliminary}) with a unique basis $I^\star $, hence  
$\text{supp} (\bm{\pi}^\star )=I^\star $.
We make our prove in two steps: 1) Give expression of $\bm{E}(\lambda)$ and showing that $\lambda\log\|\bm{E}(\lambda)\|=\Theta(1)$; 2) Give the expression of the Matrix $\bm{M}$ and show that residual term $\epsilon(\lambda)$ in~\eqref{equ: ROT expansion large regularizer} is exponentially small.
\begin{proposition}
\label{Prop: first prop for large regime}
    Let $\bm{E}(\lambda)=\bm{\pi}_\lambda-\bm{\pi}^\star $. We have $\lambda\log\|\bm{E}(\lambda)\|=\Theta(1)$.
\end{proposition}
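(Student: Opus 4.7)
The plan is to exploit the closed-form first-order optimality conditions for the entropic program to express each entry of $\bm{\pi}_\lambda$ as an explicit exponential, and then to combine strict complementary slackness at the non-degenerate primal optimum $\bm{\pi}^\star$ with the equality constraints to obtain matching exponential upper and lower bounds on $\|\bm{E}(\lambda)\|$.

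First I would write the KKT conditions. Since $f(x)=x\log x - x$ gives $f'(x)=\log x$, stationarity of the Lagrangian yields a dual vector $\bm{\mu}_\lambda$ with $c_{ij}+\lambda\log(\bm{\pi}_\lambda)_{ij}=(\bm{A}_0^\top\bm{\mu}_\lambda)_{ij}$, i.e.,
\begin{equation*}
(\bm{\pi}_\lambda)_{ij}=\exp\!\bigl(-\eta_{\lambda,ij}/\lambda\bigr),\qquad \bm{\eta}_\lambda:=\bm{c}-\bm{A}_0^\top\bm{\mu}_\lambda.
\end{equation*}
By uniqueness and non-degeneracy of $\bm{\pi}^\star$, the LP dual optimum $\bm{\mu}^\star$ is unique (Slater plus non-degeneracy), and strict complementary slackness gives $\eta^\star_{ij}=0$ on $I^\star$ and $\eta^\star_{ij}>0$ on $(I^\star)^c$. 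A standard stability argument for regularized OT (use that $\bm{\pi}_\lambda\to\bm{\pi}^\star$, substitute into the KKT relation, and invert $\bm{A}_{0,I^\star}$ which is of full column rank after removing a redundant row) shows $\bm{\mu}_\lambda\to\bm{\mu}^\star$, and therefore $\bm{\eta}_\lambda\to\bm{\eta}^\star$ entrywise.

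For the upper bound, set $c_0=\tfrac12\min_{(i,j)\notin I^\star}\eta^\star_{ij}>0$. For all sufficiently small $\lambda$, the convergence above gives $\eta_{\lambda,ij}\geq c_0$ on $(I^\star)^c$, hence $(\bm{\pi}_\lambda)_{ij}\leq\exp(-c_0/\lambda)$ off the basis. Splitting $\bm{A}_0\bm{\pi}_\lambda=(\bm{t},\bm{s})$ into basis and non-basis parts and inverting the basis block (using $\bm{A}_0\bm{\pi}^\star=(\bm{t},\bm{s})$ as well) gives $\bm{\pi}_{\lambda,I^\star}-\bm{\pi}^\star_{I^\star}=-\bm{A}_{0,I^\star}^{-1}\bm{A}_{0,(I^\star)^c}\bm{\pi}_{\lambda,(I^\star)^c}$, which is also $O(\exp(-c_0/\lambda))$. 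Thus $\|\bm{E}(\lambda)\|\leq C_1\exp(-c_0/\lambda)$, so $\lambda\log\|\bm{E}(\lambda)\|\leq -c_0+o(1)$. For the lower bound, for any fixed $(i_0,j_0)\notin I^\star$ we have $(\bm{E}(\lambda))_{i_0j_0}=(\bm{\pi}_\lambda)_{i_0j_0}=\exp(-\eta_{\lambda,i_0j_0}/\lambda)\geq\exp(-2\eta^\star_{i_0j_0}/\lambda)$ for small enough $\lambda$, giving $\lambda\log\|\bm{E}(\lambda)\|\geq -2\eta^\star_{i_0j_0}+o(1)$. Combining both estimates yields $\lambda\log\|\bm{E}(\lambda)\|=\Theta(1)$.

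The main obstacle is the justification of $\bm{\mu}_\lambda\to\bm{\mu}^\star$ with sufficient precision: without it, one cannot control the sign and magnitude of $\eta_{\lambda,ij}$ on the non-basis indices. The route I would prefer is to avoid invoking a black-box result and instead bootstrap: first show $\bm{\pi}_\lambda\to\bm{\pi}^\star$ by standard $\Gamma$-convergence of $\langle\bm{c},\bm\pi\rangle+\lambda\sum f(\pi_{ij})$ to $\langle\bm{c},\bm\pi\rangle$, then use $(\bm{\pi}_\lambda)_{ij}>0$ on $I^\star$ combined with $\lambda\log(\bm{\pi}_\lambda)_{ij}\to 0$ to read off $(\bm{A}_0^\top\bm\mu_\lambda)_{ij}\to c_{ij}$ on $I^\star$; full column rank of $\bm{A}_{0,I^\star}$ (after removing one redundant row) then pins down $\bm\mu_\lambda\to\bm\mu^\star$. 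Once this is in hand, the rest of the proof is routine algebra with the exponential formula.
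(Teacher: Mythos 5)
Your proof is correct, and it takes a genuinely different route from the paper's. The paper observes that the entropic dual program,
\[
\min_{\bm{\xi}}\ \langle\bm{\xi},(\bm{t},\bm{s})\rangle+\lambda\sum_{i,j}\exp\!\bigl(-(\bm{c}-\bm{A}_0^\top\bm{\xi})_{ij}/\lambda\bigr),
\]
is itself an instance of the penalized LP framework developed in the paper (with $p(x)=\exp(x)$, after the change of variables $\bm{\eta}=\bm{c}-\bm{A}_0^\top\bm{\xi}$), and simply invokes Theorem~\ref{theorem: unique solution general penalty} to obtain the full first-order expansion $\bm{\eta}_\lambda=\bm{\eta}^\star+\lambda\bm{d}^\star+o(\lambda)$. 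From there $\lambda\log(\bm{\pi}_\lambda)_{I^{\star c}}=-(\bm{\eta}^\star+\lambda\bm{d}^\star+o(\lambda))_{I^{\star c}}=\Theta(1)$ is immediate, and the basis entries of $\bm{E}(\lambda)$ are controlled linearly via Lemma~\ref{lemma: I_0 is enough}. Your argument deliberately avoids the black box: you establish only the convergence $\bm{\eta}_\lambda\to\bm{\eta}^\star$ without a rate, via uniform convergence of the regularized objective on the compact feasible polytope (so $\bm{\pi}_\lambda\to\bm{\pi}^\star$), then reading off the dual potentials from KKT on $I^\star$ and inverting the basis block of $\bm{A}_0$ (modulo the one-dimensional kernel of $\bm{A}_0^\top$, which you correctly handle by removing the redundant row). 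This is weaker information than the paper's expansion, but it is all the $\Theta(1)$ conclusion requires, and your two-sided sandwich with $c_0=\tfrac12\min_{(i,j)\notin I^\star}\eta^\star_{ij}$ and $2\eta^\star_{i_0j_0}$ closes the argument cleanly. Both routes share the same essential ingredients --- the primal--dual exponential relation $\bm{\pi}_\lambda=\exp(-\bm{\eta}_\lambda/\lambda)$, strict complementary slackness to guarantee a positive gap off the basis, and the linear reduction of the basis entries of $\bm{E}(\lambda)$ to the off-basis entries. The paper's route is shorter given its machinery and yields sharper rate information; yours is more elementary and self-contained, at the cost of having to justify dual stability separately, which is precisely the ``obstacle'' you flagged and then resolved via the bootstrap argument.
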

\begin{proof}
    We consider the dual program for~\eqref{Klatt ROT formulation}:
    \begin{equation}
        \bm{\xi}_\lambda=\argmin_{\bm{\xi}\in\mathbb{R}^{2N}}\langle\bm{\xi},(\bm{t},\bm{s})\rangle+\lambda\exp(-\frac{\bm{c}-\bm{A}_0^\top\bm{\xi}}{\lambda}).
    \end{equation}
    This dual minimization problem can be transform to the formulation in Theorem~\ref{theorem: unique solution general penalty} through a linear transform $\bm{\eta}=\bm{c}-\bm{A}_0^\top\bm{\xi}$.  
    
    Our theorem indicates that the minimizer satisfies
    \begin{equation}
        \bm{c}-\bm{A}_0^\top\bm{\xi}_\lambda\coloneqq\bm{\eta}_\lambda=\bm{\eta}^\star +\lambda\bm{d}^\star +\beta(\lambda),
    \end{equation}
    where $\beta(\lambda)\ll \lambda$ and  $\bm{\eta}^\star $ is the dual optimal solution for~\eqref{Klatt OT formulation}.  Since $\bm{\pi}^\star $ is the unique non-degenerate solution,  the primal--dual solution pair $(\bm{\pi}^\star ,\bm{\eta}^\star )$ satisfies the strict complementary slackness.

    We can express the solution $\bm{\pi}_\lambda$ using the dual program solution:
    \begin{equation}
        \bm{\pi}_\lambda=\exp(-\frac{\bm{\eta}_\lambda}{\lambda}).
    \end{equation}
    Since $\bm{A}_0\bm{E}(\lambda)=0$ and the unique basis $I^\star =\text{supp}(\bm{\pi}^\star )$, by Lemma~\ref{lemma: I_0 is enough}, we know that $\bm{E}(\lambda)_{I^\star }$ is a linear function of $\bm{E}(\lambda)_{I^{\star c}}$. Thus, we only need to show that $\bm{E}(\lambda)_{I^{\star c}}$ is exponentially small.

    Since $\text{supp}(\bm{\pi}^\star )^c=\text{supp}(\bm{\eta}^\star )$, $\bm{E}(\bm{\lambda})_{I^{\star c}}=(\bm{\pi}_\lambda)_{I^{\star c}}$ and
    \begin{equation}
        \lambda\log(\bm{E}(\bm{\lambda})_{I^{\star c}})=-(\bm{\eta}^\star +\lambda\bm{d}^\star +\beta(\lambda))_{I^{\star c}}=\Theta(1).
    \end{equation}
    \end{proof}
    \begin{proposition}
        Let $\bm{M}_{\Delta\bm{t}}\coloneqq\bm{M}(\bm{t_n-\bm{t}})$ be defined as 
        \begin{equation}
            (\bm{M}_{\Delta\bm{t}})_{I^\star }=\bm{A}_{0,I^\star }^{-1}(\bm{t}_n-\bm{t},0),\quad (\bm{M}_{\Delta\bm{t}})_{I^{\star c}}=0,
        \end{equation}
        and $\bm{\pi}'_{n,\lambda}=\bm{\pi}_\lambda+\bm{M}_{\Delta\bm{t}}$.

        We have $\Delta \bm{\pi}\coloneqq\bm{\pi}_{\lambda,n}-\bm{\pi}'_{n,\lambda}=O_p(\|\bm{t_n}-\bm{t}\|\epsilon_{\lambda})$, where $\epsilon_{\lambda}$ converge exponentially fast to $0$.
    \end{proposition}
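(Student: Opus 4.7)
The plan is to compare $\bm{\pi}_{\lambda,n}$ and $\bm{\pi}'_{n,\lambda}$ through the Jacobian of the map $\bm{t}' \mapsto \bm{\pi}_{\lambda}(\bm{t}',\bm{s})$ evaluated at $\bm{t}$. Both plans lie in the affine subspace $\{\bm{\pi} : \bm{A}_0 \bm{\pi} = (\bm{t}_n,\bm{s})\}$, so $\bm{A}_0 \Delta \bm{\pi} = \bm{0}$. Using the dual representation from the proof of Proposition~\ref{Prop: first prop for large regime}, $\bm{\pi}_{\lambda,n} = \exp(-(\bm{c} - \bm{A}_0^\top \bm{\xi}_{\lambda,n})/\lambda)$, and implicit differentiation through the primal constraint $\bm{A}_0 \bm{\pi}_{\lambda,n} = (\bm{t}_n,\bm{s})$ yields the Jacobian
\[
 J := \left.\frac{\partial \bm{\pi}_{\lambda,n}}{\partial \bm{t}_n}\right|_{\bm{t}_n = \bm{t}} = \operatorname{diag}(\bm{\pi}_\lambda)\, \bm{A}_0^\top \bigl(\bm{A}_0 \operatorname{diag}(\bm{\pi}_\lambda) \bm{A}_0^\top\bigr)^{-1} \bm{E},
\]
where $\bm{E}$ is the natural embedding of the $\bm{t}$-coordinates into $\mathbb{R}^{2N}$.

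Under non-degeneracy, $\bm{A}_{0,I^\star}$ is a square invertible submatrix and $\bm{\pi}_\lambda|_{I^{\star c}} = O(\epsilon_\lambda)$ by Proposition~\ref{Prop: first prop for large regime}, so the dual Gram matrix splits as $\bm{A}_0 \operatorname{diag}(\bm{\pi}_\lambda) \bm{A}_0^\top = \bm{A}_{0,I^\star} \operatorname{diag}(\bm{\pi}^{(I^\star)}_\lambda) \bm{A}_{0,I^\star}^\top + O(\epsilon_\lambda)$, with the first term invertible with bounded inverse. A block computation then shows that $J$ equals $\bm{A}_{0,I^\star}^{-1}\bm{E}$ on the $I^\star$ rows and is $O(\epsilon_\lambda)$ on the $I^{\star c}$ rows; the former is, by construction, exactly the $I^\star$-block of the map $(\bm{t}_n - \bm{t}) \mapsto \bm{M}_{\Delta \bm{t}}$. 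Consequently the linear terms cancel in $\bm{\pi}_{\lambda,n} - \bm{\pi}_\lambda - \bm{M}_{\Delta\bm{t}}$ up to the exponentially small residue, and a Taylor expansion of $\bm{\xi} \mapsto \exp(-(\bm{c}-\bm{A}_0^\top\bm{\xi})/\lambda)$ controls the second-order term, yielding
\[
\Delta \bm{\pi} = \bm{\pi}_{\lambda,n} - \bm{\pi}_\lambda - \bm{M}_{\Delta \bm{t}} = O(\epsilon_\lambda \|\bm{t}_n - \bm{t}\|) + O(\|\bm{t}_n - \bm{t}\|^2).
\]
Finally, in the regime $\lambda \gg 1/\log n$ of Proposition~\ref{proposition: ROT} we have $\epsilon_\lambda \gg n^{-1/2}$ while $\|\bm{t}_n - \bm{t}\| = O_p(n^{-1/2})$, so $\|\bm{t}_n - \bm{t}\|^2 = O_p(n^{-1/2} \|\bm{t}_n - \bm{t}\|) = o_p(\epsilon_\lambda \|\bm{t}_n - \bm{t}\|)$, which absorbs the quadratic term into the stated $O_p(\epsilon_\lambda \|\bm{t}_n - \bm{t}\|)$ bound.

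The main obstacle is making the Jacobian-plus-remainder analysis uniform in $\lambda \to 0$: the dual Hessian has norm of order $\lambda^{-1}$ on the subspace spanned by $\bm{A}_{0,I^\star}$, while its off-support directions have curvature that scales with $\bm{\pi}_\lambda|_{I^{\star c}}/\lambda$, so one must verify that Taylor's theorem applied to the nonlinear dual map holds on a $\lambda$-independent neighborhood of $\bm{t}$. The cleanest route is a direct strong-convexity estimate for the dual objective restricted to $\operatorname{Im}(\bm{A}_{0,I^\star})$, which together with the exponential gap between the $I^\star$ and $I^{\star c}$ diagonals in $\operatorname{diag}(\bm{\pi}_\lambda)$ controls both the inverse Hessian and the quadratic remainder simultaneously.
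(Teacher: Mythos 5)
Your proof takes a genuinely different route from the paper's. The paper exploits the exact optimality conditions: since $\bm{\pi}_{\lambda,n}$ minimizes the empirical regularized objective and $\bm{\pi}'_{\lambda,n}$ is feasible, the objective difference is nonpositive; combined with the first-order condition $\bm{c}+\lambda f'(\bm{\pi}_\lambda)=\bm{A}_0^\top\bm{\xi}$ this becomes a sum of Bregman-divergence terms $\sum h_{i,j}\leq 0$, which is then split over $I^\star$ and $I^{\star c}$ and controlled via the convex-growth lower bound of Lemma~\ref{lem:new_norm_bound}, directly yielding $\|(\Delta\bm{\pi})_{I^{\star c}}\|\lesssim \|(\bm{\pi}_\lambda)_{I^{\star c}}\|\|\bm{M}_{\Delta\bm{t}}\|$ with the $I^\star$ entries then controlled by Lemma~\ref{lemma: I_0 is enough}. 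In contrast, you apply the implicit function theorem to the map $\bm{t}'\mapsto\bm{\pi}_\lambda(\bm{t}',\bm{s})$ and show that the Jacobian's $I^\star$-block matches $\bm{A}_{0,I^\star}^{-1}\bm{E}$ up to $O(\epsilon_\lambda)$ while the $I^{\star c}$-block is itself $O(\epsilon_\lambda)$; the conclusion then follows from first-order Taylor expansion plus an absorption argument for the quadratic remainder. Your block decomposition of the dual Gram matrix $\bm{A}_0\operatorname{diag}(\bm{\pi}_\lambda)\bm{A}_0^\top$ is correct and gives the right leading-order cancellation.

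Two issues keep your argument from being complete. First, as written the Jacobian formula inverts $\bm{A}_0\operatorname{diag}(\bm{\pi}_\lambda)\bm{A}_0^\top$, which is singular in the raw OT formulation because $\bm{A}_0$ has a one-dimensional redundancy; you need to work with a full-rank reduction of $\bm{A}_0$ (so that $\bm{A}_{0,I^\star}$ is genuinely square and invertible, as the paper implicitly assumes) or a pseudo-inverse. This is a standard technicality. Second, and more substantively, the Taylor-remainder control is asserted rather than proven. You correctly identify this as the main obstacle and propose a strong-convexity route, but do not carry it out. Your own heuristic computation does suggest the Hessian of $\bm{t}\mapsto\bm{\pi}_\lambda(\bm{t})$ is $O(1)$ uniformly in $\lambda$ (and in fact is $O(\epsilon_\lambda)$ on $I^{\star c}$ rows) because the $\lambda^{-1}$ in $\partial\bm{\pi}/\partial\bm{\xi}$ cancels against the $O(\lambda)$ in $\partial\bm{\xi}/\partial\bm{t}$, so the gap is likely fixable, but until that uniformity is established the quadratic remainder $O(\|\bm{t}_n-\bm{t}\|^2)$ --- and hence the final absorption step requiring $\epsilon_\lambda\gg n^{-1/2}$ --- is not justified. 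The paper's primal-optimality argument avoids this issue altogether: it never compares against a Taylor expansion and produces the bound $O_p(\|\bm{t}_n-\bm{t}\|\epsilon_\lambda)$ directly, without a separate quadratic term to absorb, so it holds more cleanly and with fewer regularity verifications.
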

    \begin{proof}
    By the definition of $\Delta\bm{\pi}$, we have $\bm{A}_0\Delta\bm{\pi}=0$.

    In the empirical situation with $\bm{t}$ replaced by $\bm{t}_n$, we evaluate the difference in the cost function in equation ~\eqref{Klatt ROT formulation}  at two different vectors: the optimal plan $\bm{\pi}_{\lambda,n}$ and a feasible plan $\bm{\pi}'_{\lambda,n}$. Since $\bm{\pi}_{\lambda,n}$ is optimal for the program, we have
    \begin{equation}
    \label{governing inequality to prove large regime}
        \langle\bm{c}, \Delta{\bm{\pi}}\rangle +\lambda \sum_{i,j=1}^N f\left((\bm{\pi}_{\lambda,n})_{i,j}\right)-f\left((\bm{\pi}'_{\lambda,n})_{i,j}\right)\leq 0
    \end{equation}
    
        The first order optimality condition for~\eqref{Klatt ROT formulation} writes:
        \begin{equation}
            \bm{c}+\lambda f'(\bm{\pi}_\lambda)=\bm{A_0^\top\xi}.
        \end{equation}
        Therefore, we have 
        \begin{equation}
            \langle\bm{c},\Delta\bm{\pi}\rangle=\langle-\lambda f'(\bm{\pi}_\lambda),\Delta\bm{\pi}\rangle,
        \end{equation}
        and the governing inequality~\eqref{governing inequality to prove large regime} can be written as
        \begin{equation}
             \sum_{i,j=1}^N h_{i,j}\coloneqq\sum_{i,j=1}^N f\left((\bm{\pi}_{\lambda,n})_{i,j}\right)-f\left((\bm{\pi}'_{\lambda,n})_{i,j}\right)-f'\left((\bm{\pi}_{\lambda})_{i,j}\right)\Delta\bm{\pi}_{i,j}\leq 0
        \end{equation}
        We separately simplify the summations over the set $I^\star $ and its complement.

        For $(i,j)\in I^\star $, by the convexity of $f$ we have 
        $$h_{i,j}\geq (f'\left((\bm{\pi}'_{\lambda,n})_{i,j}\right)-f'\left((\bm{\pi}_{\lambda})_{i,j}\right))\Delta\bm{\pi}_{i,j}=\Delta\bm{\pi}_{i,j}\log\left(\frac{(\bm{\pi}'_{\lambda,n})_{i,j}}{(\bm{\pi}_{\lambda})_{i,j}}\right)$$

        For $(i,j)\in I^{\star c}$, since $\text{supp} (\bm{M}_{\Delta\bm{t}})\subseteq I^\star  $, we have $(\bm{\pi}'_{\lambda,n})_{i,j}=(\bm{\pi}_{\lambda})_{i,j}$ and 
        \begin{align*}
            h_{i,j}&=f\left((\bm{\pi}_{\lambda}+\Delta\bm{\pi})_{i,j}\right)-f\left((\bm{\pi}_{\lambda})_{i,j}\right)-f'\left((\bm{\pi}_{\lambda})_{i,j}\right)\Delta\bm{\pi}_{i,j} \\
            &=(\bm{\pi}_\lambda)_{i,j}\left[f\left(1+\frac{\Delta\bm{\pi}_{i,j}}{(\bm{\pi}_\lambda)_{i,j}}\right)-f(1)\right]\\
            &\geq C(\bm{\pi}_\lambda)_{i,j}\min\left(\left|\frac{\Delta\bm{\pi}_{i,j}}{(\bm{\pi}_\lambda)_{i,j}}\right|,\left(\frac{\Delta\bm{\pi}_{i,j}}{(\bm{\pi}_\lambda)_{i,j}}\right)^2\right),\end{align*}
            where $C$ is a fixed constant.
        The second equality is obtained utilizing properties of the entropy function that $f(x)=x\log x-x$, $f'(x)=\log x$, and $f(1)=-1$. The last inequality is a direct result of Lemma~\ref{lem:new_norm_bound}.

        Finally, we have
        \begin{equation}
            C\sum_{(i,j)\in I^{\star c}}\min\left(\left|{\Delta\bm{\pi}_{i,j}}\right|,\frac{(\Delta\bm{\pi}_{i,j})^2}{(\bm{\pi}_\lambda)_{i,j}}\right)\leq-\sum_{(i,j)\in I^{\star }}\Delta\bm{\pi}_{i,j}\log\left(\frac{(\bm{\pi}'_{\lambda,n})_{i,j}}{(\bm{\pi}_{\lambda})_{i,j}}\right).
        \end{equation}
        
        We then generate a bound for the right hand side of the inequality.

        When $(i,j)\in I^{\star }$, $(\bm{\pi}_\lambda)_{i,j}=\Theta(1)$, thus $$\log\left(\frac{(\bm{\pi}'_{\lambda,n})_{i,j}}{(\bm{\pi}_{\lambda})_{i,j}}\right)=\log\left(1+\frac{(\bm{M}_{\Delta \bm{t}})_{i,j}}{(\bm{\pi}_{\lambda})_{i,j}}\right)=O_p(\|\bm{M}_{\Delta \bm{t}}\|)\ll1.$$
        By Lemma~\ref{lemma: I_0 is enough}, we have $\|(\Delta\bm{\pi})_{I^\star }\|\leq C'\|(\Delta\bm{\pi})_{I^{\star c}}\|$.

        Consequently,  
        \begin{equation}
            \sum_{(i,j)\in I^{\star c}}\min\left(\left|{\Delta\bm{\pi}_{i,j}}\right|,\frac{(\Delta\bm{\pi}_{i,j})^2}{(\bm{\pi}_\lambda)_{i,j}}\right)\leq C''\|\bm{M}_{\Delta \bm{t}}\|\|(\Delta\bm{\pi})_{I^{\star c}}\|,
        \end{equation}
         which indicates that 
        \begin{equation}
            \min\left(\Delta\bm{\pi}_{max},\frac{(\Delta\bm{\pi}_{max})^2}{(\bm{\pi}_\lambda)_{i,j}}\right)\leq C''N^2\|\bm{M}_{\Delta \bm{t}}\|\Delta\bm{\pi}_{max}
        \end{equation}
        with $\Delta\bm{\pi}_{max}=\max_{(i,j)\in I^{\star c}}\left|{\Delta\bm{\pi}_{i,j}}\right|$. 
       Since $\|\bm{M}_{\Delta \bm{t}}\|=o_p(1)$, we conclude that 
        \begin{equation}
            \|(\Delta\bm{\pi})_{I^{\star c}}\|\lesssim\|(\bm{\pi}_\lambda)_{I^{\star c}}\|\|\bm{M}_{\Delta \bm{t}}\|=O_p((\bm{t}_n-\bm{t})E(\lambda)).
        \end{equation}
    \end{proof}

\subsection{Convergence of penalized linear program: proof of Theorem~\ref{theorem: unique solution general penalty}}
\label{sec:theorem_proof}

Theorem~\ref{theorem: unique solution general penalty} establishes the convergence behavior of solutions to the penalized program as the penalty parameter approaches zero. The asymptotic expansion in the theorem is primarily characterized by the solution for an auxiliary convex  program~\eqref{equ for 1st order}. Before we prove the this theorem, we first prove the existence and uniqueness of the penalized program and the auxiliary program solutions.  
\begin{proof}[Proof of the existence and uniqueness of the penalized program solution (Proposition~\ref{existence and uniqueness})]

    $ $
    
    By Assumption~\ref{assumption on LP}, there is a feasible vector $\bm{x}_0 $ for~\eqref{primal} with all positive entries.
    In particular, $\bm{x}_0 \in \inter \dom f_r$, so $\bm x_0$ is feasible for~\eqref{equ:penalized function} as well.
	Lemma~\ref{lem:p_properties} implies that $f_r(\bx)$ is strictly convex, so, if a solution exists, it is unique.
    
	It therefore suffices to show that a solution exists.
	This follows from convex duality: the penalized program~\eqref{general penalty problem} is the convex dual of
    \begin{equation}
    \label{dual penalized program}
        \max_{\bm{\lambda}\in \mathbb{R}^k} \langle \bb, \bm{\lambda} \rangle - r \sum_i q(c_i - (\bA^\top \bm{\lambda})_i).
    \end{equation}
    Fenchel duality~\citep[Proposition 5.3.8]{bertsekas2009convex} implies that a primal solution is guaranteed to exist so long as Slater's condition for the dual problem holds, that is, so long as there exists $\bm{\lambda}^\star $ such that $\bc - A^\top \bm{\lambda}^\star  > 0$.
    But this follows from the fact that the set of solutions to the original linear program~\eqref{primal} is bounded~\citep[Corollary 7.1k]{schrijver1998theory}.

	Finally, we show that the solution satisfies $\bx(r, \bb) \in \inter \dom f_r$.
Let $t = \sup\{s: s \in \dom p\}$.
Since $\dom f_r = (\dom p)^m$, if the $\bx(r, \bb)$ does not lie in $\inter \dom f_r$, then there exists a coordinate $i$ such that $\bx(r, \bb)_i = t$.
Lemma~\ref{lem:p_properties} shows that $t \geq 0$ and $p'(x) \to +\infty$ as $x \uparrow t$; in particular, this implies that the directional derivative of $f_r$ at $\bx(r, \bb)$ in the direction $\bx_0 - \bx(r, \bb)$ is $-\infty$, contradicting the claimed optimality of $\bx(r, \bb)$.
%
%
%
%
%
%
   
\end{proof}

To prove that the auxiliary program has a unique solution, we show that it is locally strong convex. Here, $I_0$ is defined the same as in~\eqref{equ for 1st order}.
\begin{lemma}[Locally strong convexity of the auxiliary program]
 \label{lemma: g convex on affine}
 Let $g(\bm{x})=\langle \bc,\bx\rangle+ \sum_{i\in I_0} p(-x_i)$ and the affine space $\mathbb{A}=\{\bm{x}\in\mathbb{R}^m|\bm{Ax}=\bm{\alpha}_0\}$.
 Then $g(x)$ is locally strongly convex and $C^2$ on $\mathbb{A}$, i.e., for every $\bm{x}\in \operatorname{int}(\operatorname{dom}g) $, $$\inf_{\bm w \in \text{Nul } \bm{A}, \|\bm w\|^2 = 1}\langle \bm w, (\nabla^2 g) \bm w\rangle > 0.$$
 \end{lemma}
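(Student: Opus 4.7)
The plan is to reduce local strong convexity to a linear-independence statement about a submatrix of $\bm{A}$ indexed by the complement of $I_0$. Concretely, since $p = q^\star$ is $C^2$ on the interior of its domain by Assumption~\ref{new assumptions on p} (and Lemma~\ref{lem:p_properties}, part 4), $g$ is $C^2$ on $\inter(\dom g)$. A direct computation gives that the Hessian $\nabla^2 g(\bm{x})$ is the diagonal matrix with entries $p''(-x_i)$ for $i \in I_0$ and $0$ otherwise. So for any $\bm{w} \in \mathbb{R}^m$,
\begin{equation*}
    \langle \bm{w}, (\nabla^2 g(\bm{x})) \bm{w}\rangle = \sum_{i \in I_0} p''(-x_i)\, w_i^2.
\end{equation*}
By Lemma~\ref{lem:p_properties} part 4, $p''(-x_i) > 0$ for each $i \in I_0$ at any $\bm{x} \in \inter(\dom g)$, so this quadratic form is nonnegative and vanishes if and only if $w_i = 0$ for every $i \in I_0$.

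The key step is therefore to show that the only $\bm{w} \in \mathrm{Nul}(\bm{A})$ with $w_i = 0$ for all $i \in I_0$ is $\bm{w} = \bm{0}$; equivalently, the submatrix $\bm{A}_{I_0^c}$ has full column rank. This is precisely what Lemma~\ref{lemma: I_0 is enough} provides: applied with $\bm{b}_0 = \bm{0}$ and $\alpha_i = 0$ for $i \in I_0$, it asserts that the homogeneous system $\bm{A}\bm{w} = \bm{0}$, $w_i = 0$ for $i \in I_0$, admits at most one solution (which must be the trivial one). Hence any nonzero $\bm{w} \in \mathrm{Nul}(\bm{A})$ must have $w_i \neq 0$ for some $i \in I_0$, so the quadratic form above is strictly positive on $\{\bm{w} \in \mathrm{Nul}(\bm{A}) : \|\bm{w}\| = 1\}$.

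To conclude local strong convexity at a fixed $\bm{x} \in \inter(\dom g)$, I would invoke compactness: the map $\bm{w} \mapsto \sum_{i \in I_0} p''(-x_i) w_i^2$ is continuous, strictly positive on the compact set $\{\bm{w} \in \mathrm{Nul}(\bm{A}) : \|\bm{w}\| = 1\}$, and therefore attains a positive minimum. This is exactly the displayed infimum condition in the lemma. The main conceptual obstacle is recognizing that uniqueness of the LP solution (and nothing more) is what yields full column rank of $\bm{A}_{I_0^c}$; once that is identified, the rest is a routine continuity/compactness argument and does not require any quantitative control on the rate of strong convexity (which is expected, since the claim is local rather than uniform).
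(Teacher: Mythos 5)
Your proposal is correct and follows essentially the same path as the paper: compute the (diagonal) Hessian, observe that the quadratic form equals $\sum_{i\in I_0} p''(-x_i)w_i^2$, invoke Lemma~\ref{lemma: I_0 is enough} (with $\bm{b}_0=\bm 0$, $\bm\alpha=\bm 0$) to conclude that no nonzero $\bm w\in\mathrm{Nul}(\bm A)$ can vanish on $I_0$, and then use compactness of the unit sphere in $\mathrm{Nul}(\bm A)$ to extract a positive lower bound. The only cosmetic difference is that the paper factors out $\min_{i\in I_0}p''(-x_i)$ and phrases the compactness step as a small auxiliary quadratic program, whereas you argue directly on the full quadratic form; the substance is the same.
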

 \begin{proof}
Directly taking derivatives to $g$, we have 
    $$\langle \bm w, (\nabla^2 g) \bm w\rangle=\sum_{i\in I_0}\omega_i^2p''(x_i)\geq\min_{i\in I_0} p''(-x_i) \sum_{i\in I_0} \omega_i^2.$$
     Consider the quadratic program 
     \begin{equation}
         v_{min}=\min_{\|\bm w\|^2=1} \sum_{i\in I_0} \omega_i^2,\quad  \bm{A\omega}=0.
     \end{equation}
     The program attains a solution in the set $\|\bm w\|^2=1$ by the compactness of the feasible set.
     Since by Lemma~\ref{lemma: I_0 is enough}, $w_i=0$ for all $i\in I_0$ indicates that $\bm{w}=0$ which is not feasible for the quadratic program. Therefore, $v_{min}>0$ and by the locally strong convexity of $p(x)$ we have $$\langle \bm w, (\nabla^2 g) \bm w\rangle\geq v_{min}\min_{i\in I_0} p''(-x_i)>0.$$
 \end{proof}
 \begin{lemma}
    The minimization problem in~\eqref{equ for 1st order} is feasible and has a unique solution
\end{lemma}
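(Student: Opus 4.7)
The plan is to establish the three properties in turn: feasibility, existence of a minimizer, and uniqueness. Throughout, I will use $g(\bm d) := \langle \bm c, \bm d\rangle + \sum_{i \in I_0} p(-d_i)$ for the objective.

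For feasibility, I will invoke Slater's condition (Assumption~\ref{assumption on LP}) to produce a point $\bm x_0 \in \relint \mathcal{D}$ with $\bm x_0 > \bm 0$ and set $\bm d_0 = \bm x_0 - \bm x^\star$. Then $\bm A \bm d_0 = \bm b - \bm b = \bm 0$ and $d_{0,i} = x_{0,i} > 0$ for every $i \in I_0$. Since Lemma~\ref{lem:p_properties} guarantees $(-\infty, 0) \subseteq \dom p$, we have $p(-d_{0,i}) < +\infty$, so the objective is finite at $\bm d_0$, confirming feasibility.

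For existence, the key step is to exploit LP duality to rewrite the linear term in a more usable form. Slater's condition together with boundedness of the dual optimal face (cited in the preliminaries as \cite[Corollary 7.1k]{schrijver1998theory}) delivers a strictly complementary dual solution $(\bm \lambda^\star, \bm \eta^\star)$ satisfying $\bm A^\top \bm \lambda^\star + \bm \eta^\star = \bm c$, $\bm \eta^\star \geq \bm 0$, and $\eta_i^\star > 0$ for all $i \in I_0$. For any $\bm d$ with $\bm A \bm d = \bm 0$, this yields $\langle \bm c, \bm d \rangle = \langle \bm \eta^\star, \bm d \rangle = \sum_{i \in I_0} \eta_i^\star d_i$, so on the feasible set
\begin{equation*}
    g(\bm d) = \sum_{i \in I_0}\bigl(\eta_i^\star d_i + p(-d_i)\bigr).
\end{equation*}
Each summand is convex and coercive in $d_i$: as $d_i \to +\infty$, $\eta_i^\star d_i \to +\infty$ while $p(-d_i)$ is bounded below (by Lemma~\ref{lem:p_properties}, $p'(x) \to 0$ as $x\to -\infty$ and $p$ is proper); as $d_i$ moves toward the opposite boundary of $-\dom p$, the divergence $p'(x)\to +\infty$ makes $p(-d_i)$ dominate the linear term. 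Combined with Lemma~\ref{lemma: I_0 is enough}, which bounds $\|\bm d\|$ by $\|\bm d_{I_0}\|$ on the null space of $\bm A$, this shows $g$ is coercive on the feasible set. Lower semi-continuity of $g$ (inherited from that of $p$) then gives existence of a minimizer.

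For uniqueness, Lemma~\ref{lemma: g convex on affine} establishes that $g$ is locally strongly convex on the affine space $\{\bm d: \bm A \bm d = \bm 0\}$ at every interior point of $\dom g$. Together with global convexity of $g$, this implies that $g$ is strictly convex on $\inter(\dom g) \cap \{\bm A \bm d = \bm 0\}$. Because $p'$ is unbounded above at the upper boundary of $\dom p$ (Lemma~\ref{lem:p_properties}), any candidate minimizer must lie in $\inter(\dom f_r)$ by the same directional-derivative argument as in Proposition~\ref{existence and uniqueness}. Strict convexity on this interior subset then forces uniqueness.

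The main obstacle is coercivity: the objective is not obviously coercive because the linear term $\langle \bm c, \bm d \rangle$ can be unbounded below, and the barrier $p$ need not grow at both ends of its domain. The clean resolution goes through strict complementary slackness, which converts the single linear term into a sum of pieces, each of whose sign is compatible with the corresponding barrier, making coercivity transparent entry by entry.
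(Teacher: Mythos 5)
Your feasibility and uniqueness arguments mirror the paper's. The interesting divergence is in how you establish existence. The paper proves existence by passing to the Fenchel dual of~\eqref{equ for 1st order},
\begin{equation*}
\max_{\bm{\lambda}\in\mathbb{R}^k} -\sum_{i\in I_0} q\bigl(c_i-(\bm{A}^\top\bm{\lambda})_i\bigr),\qquad c_i-(\bm{A}^\top\bm{\lambda})_i=0 \text{ for } i\in I_0^c,
\end{equation*}
and then verifying the dual Slater condition, which amounts to producing a strictly complementary dual solution $(\bm\lambda^\star,\bm\eta^\star)$ of the original LP. Once the dual is strictly feasible, the same Fenchel duality citation used in Proposition~\ref{existence and uniqueness} gives a primal minimizer. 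You instead take that same strictly complementary $\bm\eta^\star$ and use it to substitute $\langle\bm c,\bm d\rangle = \sum_{i\in I_0}\eta^\star_i d_i$ on the kernel of $\bm A$, reducing the objective to a separable sum $\sum_{i\in I_0}(\eta^\star_i d_i + p(-d_i))$ whose coercivity is checked entry by entry, with Lemma~\ref{lemma: I_0 is enough} closing the loop by controlling $\|\bm d\|$ through $\|\bm d_{I_0}\|$ on $\ker\bm A$. Both arguments hinge on the same structural fact---existence of a strictly complementary dual solution, guaranteed by Slater's condition and boundedness of the dual optimal face---but the paper deploys it as dual strict feasibility for Fenchel duality, while you use it to make coercivity transparent and then apply Weierstrass. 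Your route is more elementary (no appeal to Fenchel attainment theorems) and has the side benefit of making explicit why the barrier controls both ends of the domain; the paper's route is shorter to write and reuses infrastructure already built for Proposition~\ref{existence and uniqueness}. Your additional observation that the minimizer must lie in $\inter\dom g$ before invoking strict convexity is actually a gap the paper glosses over, so the extra care is warranted.
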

\begin{proof}
\textbf{Feasibility:} 
By the Slater's condition of the original linear program~\eqref{primal}, there is a vector $\bm{x}_s$ with all positive entries that is feasible to the linear program. It is easy to verify that  $\bm{d}_0=\bm{x}_s-\bm{x}^\star $ is feasible to~\eqref{equ for 1st order} since $(\bm{d}_0)_{I_0}\geq 0$.

\textbf{Existence and uniqueness:} 
Following a similar procedure as in the proof of Proposition~\ref{existence and uniqueness}, we write the dual program of~\eqref{equ for 1st order}:
\begin{equation}
\max_{\bm{\lambda}\in\mathbb{R}^k} -\sum_{i\in I_0} q(c_i-(\bm{A}^\top\bm{\lambda})_i),\quad c_i-(\bm{A}^\top\bm{\lambda})_i=0 \text{ for } i\in I_0^c.
\end{equation}
Feasibility of the dual program, i.e. exists $\bm{\lambda}$ such that $c_i-(\bm{A}^\top\bm{\lambda})_i>0$ for $i\in I_0$ and $c_i-(\bm{A}^\top\bm{\lambda})_i=0$ for $i\in I_0^c$, is a direct result of the complementary slackness of the linear program solutions \citep[Section 7.9 ]{schrijver1998theory}. Therefore, under the same arguments as in the proof of Proposition~\ref{existence and uniqueness} we have the existence of a solution for~\eqref{equ for 1st order}. Also, since the cost function of the primal program is strictly convex on the affine space $\{\bm{d}\in \mathbb{R}^m| \bm{Ad}=0\}$ by Lemma~\ref{lemma: g convex on affine}, we can conclude that such a solution is unique.
\end{proof}

We now proceed with proving the theorem. Our approach leverages two key facts: both $\bm{x}^\star +r\bm{d}^\star $ and $\bm{x}(r,\bm{b})$  are feasible solutions to the penalized program, and $\bm{x}(r,\bm{b})$ achieves optimality. The proof employs an important property of locally strongly convex functions established in Lemma~\ref{lem:new_norm_bound}.
\begin{proof}[Proof of Theorem~\ref{theorem: unique solution general penalty}]
	Define $\bd(r) =(\bm{x}(r, \bb)-\bm{x}^\star )/r$.
	It suffices to show that $\|\bd^\star  - \bd(r)\| = O(\beta(r))$ as $r \to 0$.
	
	Define $g_r(\bd) =  \langle \bc, \bd \rangle + \sum_{i=1}^m p(-\frac{x_i^\star }{r} - d_i)$, and let $g(\bd) = \langle \bc, \bd \rangle+\sum_{i \in I_0} p( - d_i)$.
	Define $\bd(r) =(\bm{x}(r, \bb)-\bm{x}^\star )/r$.
	The convexity of $p$ implies that
	\begin{equation*}
		h_r(\bd) := g_r(\bd) - g(\bd) = \sum_{i \notin I_0} p(-\frac{x_i^\star }{r} - d_i)
	\end{equation*}
	is a convex function of $\bd$.
	Hence
	\begin{equation*}
		h_r(\bd^\star ) - h_r(\bd(r)) \leq \langle \nabla h_r(\bd^\star ), \bd^\star  - \bd(r) \rangle\,.
	\end{equation*}
	On the other hand, since $\bd(r)$ minimizes $g_r$, we also have
	\begin{equation*}
		g(\bd(r)) - g(\bd^\star ) \leq  h_r(\bd^\star ) - h_r(\bd(r))
	\end{equation*}
	Combining these facts yields
	\begin{equation*}
		g(\bd(r)) - g(\bd^\star ) \leq \langle \nabla h_r(\bd^\star ), \bd^\star  - \bd(r) \rangle\,.
	\end{equation*}
	
	For any $\bv \in \Rset^m$, we have
	\begin{equation}
		|\langle \nabla h_r(\bd^\star ), \bv \rangle| \leq \sum_{i \notin I_0} p'(-\frac{x_i^\star }{r} - d^\star _i)| v_i| = O(\beta(r) \|v_i\|) \quad \text{as $r \to 0$}\,,
	\end{equation}
	where the implicit constant depends on $\bx^\star $ but can taken independent of $r$ and $\bv$.
	
	We obtain that there an exists a positive constant $C'$ such that
	\begin{equation*}
		g(\bd(r)) - g(\bd^\star )) \leq C' \beta(r) \|\bd^\star  - \bd(r)\|\,.
	\end{equation*}
	Applying Lemma~\ref{lem:new_norm_bound} and Lemma~\ref{lemma: g convex on affine} with $K = \{\bd^\star \}$ and with $\mathbb{A}=\{\bm{x}\in\mathbb{R}^m|\bm{Ax}=\bm{0}\}$ and using the fact that $\bd^\star $ is a minimizer of $g$ over $\ker A$, we obtain
	\begin{equation}
		C \min\{\|\bd^\star  - \bd(r)\|,\|\bd^\star  - \bd(r)\|^2\}  \leq C' \beta(r) \|\bd^\star  - \bd(r)\|\,,
	\end{equation}
	which implies for $r$ small enough that $\|\bd^\star  - \bd(r)\| \leq \tfrac{C'}{C} \beta(r) = O(\beta(r))$, as desired.
    \end{proof}


%
\subsection{Asymptotic law of random penalized program solutions: proof of theorems in Section~\ref{sec: trajectory of random LP},~\ref{sec:confidence set},and~\ref{sec: bootstrap}}
This section provides the complete proofs for the main theoretical results in this paper. We first establish the asymptotic behavior of the  penalized program under small non-random perturbations of the equality constraints. We then extend these results to scenarios involving random constraints and prove the convergence law of the extrapolated estimator.
\subsubsection{Asymptotic behavior of perturbed penalized program}
We first show that the solution for the perturbed penalized program~\eqref{eq:rig_asymptotic} exists uniquely when the perturbation on vector $\bm{b}$ is small enough.
\begin{proof}[Proof of Proposition~\ref{prop:peturb_feasibility}]
    We only prove feasibility. The existence and uniqueness of the solution follows from  arguments identical to that presented in Proposition~\ref{existence and uniqueness}.

    By the Slater's condition for the linear program  in Assumption~\ref{assumption on LP}, there exists a vector $\bm{x}_0$ such that
    \begin{equation}
        \bm{Ax}_0=\bm{b},\quad x_{0,i}>0.
    \end{equation}
    Let $\bm{A}_0\in \mathbb{R}^{k\times k}$ be a full rank submatrix of $\bm{A}$ obtained by selecting columns corresponding to an index set $I$.
    Let $\Delta \bm{x}\in \mathbb{R}^m$ be constructed such that $(\Delta \bm{x})_I=\bm{A}_0^{-1}(\bm{b}'-\bm{b})$ and $(\Delta \bm{x})_{I^C}=0$.

    If  $\|\bm{b}'-\bm{b}\|\leq \|\bm{A}_0^{-1}\|\min_{i\in\{1,..,m\}}x_{0,i}$,
    then $\|\Delta \bm{x}\|\leq \min_{i\in\{1,..,m\}}x_{0,i}$, ensuring that  $\bm{x}'=\bm{x}_0+\Delta \bm{x}$ has all positive entries and satisfies $\bm{Ax}'=\bm{b}'$, thereby remaining feasible with $\bm{b}'$.  
\end{proof}

The matrix $\bm{M}^\star$ serves as the key component in the expansion trajectory of the perturbed program~\eqref{eq:rig_asymptotic}. We prove its properties stated in Proposition~\ref{proposition: unique solution of control equation}, which is crucial in the proof of Theorem~\ref{thm: trajectory with noise}.

\begin{proof}[Proof of Proposition~\ref{proposition: unique solution of control equation}]


    \textbf{Existence:} 
 Feasiblity of the program and the non-negativity of $p''(-\bm{d}^\star )$ indicate existence of the solutions.

    \textbf{Uniqueness:} Assume there are two different vectors $\bm{x}_{1}$  and $\bm{x}_2$ both solving the program~\eqref{eq:mdef}. Let $\Delta\bm{x}=\bm{x}_1-\bm{x}_2$.  Then $\Delta \bm{x}$ lies in the kernel of $\bm{A}$. The strict convexity of quadratic functions $f(x_i)=p''(-d^\star _i)x_i^2$ for $i\in I_0$ indicates that  $\Delta x_i=0$ for $i\in I_0$. Moreover, due to Lemma~\ref{lemma: I_0 is enough}, entries on $I_0$ uniquely determine $\Delta \bm{x}$ and we hence conclude $\Delta \bm{x}=0$.

    The two properties are a direct result of the first order optimality condition for this quadratic program. Let $\bm{\mu}^\star \in \mathbb{R}^k$ and $\bm{x}^\star \in \mathbb{R}^m$ be the optimal dual and primal variables separately. The first order condition indicates that
    \begin{equation}
        \bm{Ax}^\star =\bm{y},\quad \text{diag}(p''(-\bm{d}^\star )_{I_0})\bm{x}^\star -\bm{A}^\top\bm{\mu}^\star =0.
    \end{equation}
    Hence, $(\bm{x}^\star ,\bm{\mu}^\star )$ is a linear function of $(\bm{y},\bm{0})$. Therefore, there is a matrix representation $\bm{M}^\star \in \mathbb{R}^{m\times k}$ such that $\bm{x}^\star =\bm{M^\star y}$.

    Since for every $\bm{y}\in\mathbb{R}^k$ and every $\bm{q}\in \text{ker}(\bm{A})$, 
    $$\bm{A}\bm{x}^\star =\bm{AM^\star y}=\bm{y},$$
    $$
        0=\bm{q}^\top(\bm{\Sigma}^\top\bm{x}^\star -\bm{A}^\top\bm{\mu}^\star )=\bm{q}^\top\bm{\Sigma}^\top\bm{ x^\star }=\bm{q}^\top\bm{\Sigma }^\top\bm{M^\star y},
    $$
    we have $\bm{AM^\star }=I_k$ and $\bm{q}\in\text{ker}( {\bm{M}^{\star }}^\top\bm{\Sigma}^\top)$.

    Therefore, we conclude that $\bm{M}^\star  $ is a right inverse of $\bm{A}$ and that $\text{ker}(\bm{A})\subseteq \text{ker}( {\bm{M}^{\star }}^\top\bm{\Sigma}^\top)$.    
\end{proof}

Building on these results, we now characterize the trajectory of the penalized program under a small perturbation on $\bm{b}$. The techniques employed are similar to that in the proof of Theorem~\ref{theorem: unique solution general penalty}. We utilize the fact that $(\bx^\star  + r \bd^\star +\bm{M}_{\bx^\star }(\bb' - \bb))$ is feasible for the perturbed penalized program and apply Lemma~\ref{lem:new_norm_bound}. 

\begin{proof}[Proof of Theorem~\ref{thm: trajectory with noise}]
	We proceed as in the proof of Theorem~\ref{theorem: unique solution general penalty}.
	Define $\be(r, \bb') = (\bx(r, \bb') - \bx^\star  - r \bd^\star )/r$.
	Similarly, let $\be^\star  = \bm{M}^\star(\bb' - \bb)/r$.
	Our aim is to show that $\|\be(r, \bb') - \be^\star \| = O(\beta(r) + \|\bb' - \bb\|^2/r^2)$.
	
	Since $\bm{M}_{\bx^\star }$ is a right-inverse of $\bA$ by Proposition~\ref{proposition: unique solution of control equation}, we have that $\bA \be(r, \bb') = \bA \be^\star  = (\bb' - \bb)/r$.
	Therefore, as in the proof of Theorem~\ref{theorem: unique solution general penalty}, we have that $g_r(\bd^\star  + \be(r, \bb')) \leq g_r(\bd^\star  + \be^\star )$.
	Following the steps of that proof, we obtain
	\begin{equation*}
		g(\bd^\star  + \be(r, \bb')) - g(\bd^\star  + \be^\star ) \leq \langle \nabla h_r(\bd^\star  + \be^\star ),  \be^\star  - \be(r, \bb') \rangle = O(\beta(r) \|\be^\star  - \be(r, \bb')\|)\,,
	\end{equation*}
	and therefore
	\begin{equation*}
		g(\bd^\star  + \be(r, \bb')) - g(\bd^\star  + \be^\star ) - \langle \nabla  g(\bd^\star  + \be^\star ), \be(r, \bb') - \be^\star  \rangle \leq \langle \nabla  g(\bd^\star  + \be^\star ), \be^\star  - \be(r, \bb') \rangle + O(\beta(r) \|\be^\star  - \be(r, \bb')\|)\,.
	\end{equation*}
	Since $p''$ is locally Lipschitz by Lemma~\ref{lem:p_properties}, Taylor' theorem implies
	\begin{equation*}
		\langle \nabla  g(\bd^\star  + \be^\star ), \be(r, \bb') - \be^\star  \rangle = \langle \nabla g(\bd^\star ), \be(r, \bb') - \be^\star  \rangle + \langle \nabla^2 g(\bd^\star ) \be^\star , \be(r, \bb') - \be^\star  \rangle + O(\|\be^\star \|^2 \|\be(r, \bb') - \be^\star \|)\,.
	\end{equation*}
	The first-order optimality conditions of $\bd^\star $ show that the first term vanishes.
	Moreover, since
	\begin{equation*}
		\nabla^2 g(\bd^\star ) \be^\star  = r^{-1} \bm{\Sigma} \bm{M}^\star (\bb' - \bb)
	\end{equation*}
	and $\be(r, \bb') - \be^\star  $ lies in the kernel of $\bA$, Proposition~\ref{proposition: unique solution of control equation} implies that the second term vanishes as well.
	
	We obtain
	\begin{equation*}
		g(\bd^\star  + \be(r, \bb')) - g(\bd^\star  + \be^\star ) - \langle \nabla  g(\bd^\star  + \be^\star ), \be(r, \bb') - \be^\star  \rangle = O((\beta(r) + \|\be^\star \|^2)  \|\be(r, \bb') - \be^\star \|)\,.
	\end{equation*}
	Since $\|\be^\star \| = O(\|\bb' - \bb\|/r) = o(1)$, we can choose a compact set $K \subseteq \dom g$ such that $\bd^\star  + \be^\star $ remains in $K$.
	
	Applying Lemma~\ref{lem:new_norm_bound} and Lemma~\ref{lemma: g convex on affine} with $\mathbb{A}=\{\bm{x}\in\mathbb{R}^m|\bm{Ax}=\bm{b}'-\bm{b}\}$, we conclude as in the proof of Theorem~\ref{theorem: unique solution general penalty}.
\end{proof}

\subsubsection{Convergence law for random penalized program}
The convergence law in equation~\eqref{converge bn} for the random variable $\bm{b}_n$ indicates that, with high probability, $\|\bm{b}_n-\bm{b}\|$ falls within the interval $\left(r_n\beta(r_n),r_n\right)$. We leverage our  understanding of the convergence trajectory for the non-random perturbed problem, characterized in Theorem~\ref{thm: trajectory with noise}, to demonstrate the convergence law of $\bm{x}(r_n,\bm{b}_n)$.

\begin{proof}[Proof of Theorem~\ref{theorem: limit law}]

Since $r_n\beta(r_n)\ll q_n^{-1}\ll r_n$, we can choose sequences $u_n$ and $l_n$ such that $r_n\beta(r_n)\ll l_n \ll q_n^{-1} \ll u_n \ll \sqrt{r_n q_n^{-1}}$. Therefore, we have 
\begin{equation}
    P(\|\bm{b}_n-\bm{b}\|\in [l_n,u_n])\rightarrow 1.
\end{equation}
    
    \BN Let \EN $\bm{\eta}(r_n,\bm{b}_n-\bm{b})\BN :\EN={\bm{x}}(r_n,\bm{b}_n)-r_n\bm{d}^\star -\bm{x}^\star -\bm{M}^\star(\bm{b}_n-\bm{b})$. When $\|\bm{b}_n-\bm{b}\|\in [l_n,u_n]$, Theorem~\ref{thm: trajectory with noise} indicates that $$q_n\bm{\eta}(r_n,\bm{b}_n-\bm{b})\mathbbm{1}_{ \|\bm{b}_n-\bm{b}\|\in [l_n,u_n]}=o_p(1).$$
    Therefore, as a result of the Slusky's theorem and \BN Theorem \EN~\ref{thm: trajectory with noise}, 
    \begin{align}
    \begin{split}
    q_n({\bm{x}}(r_n,\bm{b}_n)-r_n\bm{d}^\star -\bm{x}^\star )=&q_n\bm{M}^\star(\bm{b}_n-\bm{b})+q_n\bm{\eta}(r_n,\bm{b}_n-\bm{b})\mathbbm{1}_{ \|\bm{b}_n-\bm{b}\|\in [l_n,u_n]}\\
    &+ q_n\bm{\eta}(r_n,\bm{b}_n-\bm{b})\mathbbm{1}_{\{l_n\geq \|\bm{b}_n-\bm{b}\|\}\cup\{\|\bm{b}_n-\bm{b}\|\geq u_n\}}\\
    =&q_n\bm{M}^\star(\bm{b}_n-\bm{b})+o_p(1)\overset{D}\to\bm{M}^\star(\mathbb{G}) 
    \end{split}
\end{align}
\end{proof}
Now we establish the asymptotic law of the debiased estimator. Our proof demonstrates that by utilizing the two solutions under different penalty strengths, we can construct an effective estimator for the bias vector $\bm{d}^\star $. 
 \begin{proof}[Proof of Theorem~\ref{theorem: debiasing}]
  Define an estimator for $\bm{d}^\star $ as
    \begin{equation}
        \hat{\bm{d}}_n=\frac{2}{r_n}({\bm{x}}(r_n,\bm{b}_n)-{\bm{x}}(\frac{r_n}{2},\bm{b}_n)).
    \end{equation}

     Consider the event $H_n\coloneqq\{l_n\leq \|\bm{b}_n-\bm{b}\|\leq u_n\}$, where $l_n$ and $u_n$ are sequences satisfying $r_n\beta(r_n)\ll l_n \ll q_n^{-1} \ll u_n \ll  \sqrt{r_n q_n^{-1}}$. We have $\lim_{n\rightarrow\infty}\mathbb{P}(H_n)=1$.

    On the event $H_n$, we have $$\|r_n(\hat{\bm{d}}_n-\bm{d}^\star )\|\leq2\|\eta(r_n,\bm{b}_n-\bm{b})\|+2\|\eta(\frac{r_n}{2},\bm{b}_n-\bm{b})\|,$$
    where $\eta(r_n,\bm{b}_n)={\bm{x}}(r_n,\bm{b}_n)-(\bm{x}^\star +r_n\bm{d}^\star +\bm{M}^\star(\bm{b}_n-\bm{b}))
    \lesssim O_p(\max(\frac{\|\bm{b}_n-\bm{b}\|^2}{r_n},r_n\beta(r_n))).$

Therefore, we have $$\|\hat{\bm{d}}_n-\bm{d}^\star \|\leq O_p(\max(\frac{\|\bm{b}_n-\bm{b}\|^2}{r_n^2},\beta(r_n)))=o_p(1).$$
Therefore, $\hat{\bm{d}}_n$ is an asymptotically consistent estimator for $\bm{d}^\star $.
Furthermore,
$$q_n r_n\|\hat{\bm{d}}_n-\bm{d}^\star \|\leq O_p(\max(\frac{q_n\|\bm{b}_n-\bm{b}\|^2}{r_n},q_n r_n\beta(r_n)))=o_p(1).$$
Combining this result with Theorem~\ref{theorem: limit law}, we have 
\begin{equation}
        q_n(\hat{\bm{x}}_n-\bm{x}^\star )\overset{D}\to\bm{M}^\star(\mathbb{G}).
    \end{equation}   
\end{proof}

The following corollary establishes a remarkable convergence property for the objective function value. Under specific conditions, the  debiased estimator $\hat{\bm{x}}_n$ achieves an accelerated convergence to the optimal value of the linear program~\eqref{primal}.
 
\begin{corollary}[fast convergence of optimal value]
\label{corollary: fast convergence of cost}
     Let $\bm{x}^\star (\bm{b})$ be the solution for the linear program~\eqref{primal} with the parameter $\bm{b}$.
    If $\bm{c}\geq 0$, $\langle \bm{c},\bm{x}^\star  \rangle =0$, and there exists $\epsilon>0$, $\text{supp}(\bm{x}^\star (\bm{b}+\epsilon \bm{A}\bm{\rho}))=\text{supp}(\bm{x}^\star (\bm{b}))$ for $\bm{\rho}= \ln(\bm{c})\cdot\bm{\mathbbm{1}}_{\{\bm{x}^\star =0\}}$ and for $\bm{\rho}= \bm{\mathbbm{1}}_{\{\bm{x}^\star =0\}}$, with the exponential penalty function,
    \begin{equation}
        q_n(\langle\bm{c},\hat{\bm{x}}_n\rangle-\langle\bm{c},{\bm{x}^\star }\rangle)\overset{D}\to 0.
    \end{equation} 
\end{corollary}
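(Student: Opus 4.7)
The plan is to reduce the claim to a deterministic algebraic identity on $\bm{M}^\star$ via Theorem~\ref{theorem: debiasing} and the continuous mapping theorem, and then to verify the identity by exploiting the structure of the exponential penalty together with the two hypotheses on $\bm{\rho}$. Theorem~\ref{theorem: debiasing} gives $q_n(\hat{\bm{x}}_n - \bm{x}^\star) \overset{d}{\to} \bm{M}^\star \bm{G}$, so the continuous mapping theorem yields
\begin{equation*}
q_n\bigl(\langle \bm{c}, \hat{\bm{x}}_n\rangle - \langle \bm{c}, \bm{x}^\star\rangle\bigr) \overset{d}{\to} \langle \bm{c}, \bm{M}^\star \bm{G}\rangle,
\end{equation*}
and convergence to $0$ in distribution reduces to showing $\bm{c}^\top \bm{M}^\star \bm{y} = 0$ for every $\bm{y}$ in the support of $\bm{G}$. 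Observe also that $\bm{c}, \bm{x}^\star \geq \bm{0}$ together with $\langle \bm{c}, \bm{x}^\star\rangle = 0$ forces $c_i = 0$ on $I_0^c$, so $\bm{c}$ is supported on $I_0$.

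Next I would identify $\bm{d}^\star$ explicitly. The first-order conditions for~\eqref{equ for 1st order} with $p(x) = e^x$ read $c_i - e^{-d^\star_i} = (\bm{A}^\top \bm{\lambda})_i$ on $I_0$ and $(\bm{A}^\top \bm{\lambda})_i = c_i = 0$ on $I_0^c$. The candidate $\bm{\lambda} = \bm{0}$ trivially solves the $I_0^c$ equations and yields $d^\star_i = -\ln c_i$ on $I_0$; feasibility $\bm{A}\bm{d}^\star = \bm{0}$ then amounts to finding $\bm{d}^\star_{I_0^c}$ with $\bm{A}_{I_0^c}\bm{d}^\star_{I_0^c} = \bm{A}_{I_0}(\ln \bm{c}_{I_0})$. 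The hypothesis for $\bm{\rho} = \ln(\bm{c})\cdot \mathbbm{1}_{\{\bm{x}^\star = \bm{0}\}}$ supplies exactly this, because $\bm{x}^\star(\bm{b}+\epsilon \bm{A}\bm{\rho})$ has support in $I_0^c$, so the scaled difference $(\bm{x}^\star(\bm{b}+\epsilon\bm{A}\bm{\rho}) - \bm{x}^\star)/\epsilon$ is supported on $I_0^c$ and $\bm{A}$-maps to $\bm{A}\bm{\rho}$, showing $\bm{A}\bm{\rho} \in \operatorname{Im}(\bm{A}_{I_0^c})$. Hence $\bm{\lambda} = \bm{0}$ is admissible and $\exp(-d^\star_i) = c_i$ for $i \in I_0$. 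Substituting into the definition of $\bm{\Sigma}$ in~\eqref{eq:mdef}, the KKT conditions for $\bm{M}^\star\bm{y}$ become $2 c_i (\bm{M}^\star \bm{y})_i = (\bm{A}^\top \bm{\nu})_i$ on $I_0$ and $(\bm{A}^\top \bm{\nu})_i = 0$ on $I_0^c$; using that $\bm{c}$ is supported on $I_0$,
\begin{equation*}
\langle \bm{c}, \bm{M}^\star \bm{y}\rangle = \sum_{i \in I_0} c_i (\bm{M}^\star \bm{y})_i = \tfrac{1}{2}\sum_{i \in I_0}(\bm{A}^\top \bm{\nu})_i = \tfrac{1}{2}\bm{\nu}^\top \bm{A}\mathbbm{1}_{I_0}.
\end{equation*}

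The final cancellation comes from the second hypothesis: applying the same support-preservation argument to $\bm{\rho} = \mathbbm{1}_{\{\bm{x}^\star=\bm{0}\}}$ shows $\bm{A}\mathbbm{1}_{I_0} = \bm{A}_{I_0^c}\bm{z}$ for some $\bm{z}$, and combined with $\bm{A}_{I_0^c}^\top \bm{\nu} = \bm{0}$ (from the $I_0^c$ block of the KKT), this yields $\bm{\nu}^\top \bm{A}\mathbbm{1}_{I_0} = (\bm{A}_{I_0^c}^\top \bm{\nu})^\top \bm{z} = 0$, so $\bm{c}^\top \bm{M}^\star \bm{y} = 0$ for every $\bm{y}$. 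The main obstacle is the identification of $\bm{d}^\star$ in the possibly-degenerate case: the dual multiplier $\bm{\lambda}$ of~\eqref{equ for 1st order} need not be unique, and the explicit formula $d^\star_i = -\ln c_i$ holds only for the specific choice $\bm{\lambda} = \bm{0}$. The two hypotheses on $\bm{\rho}$ play complementary roles, with the first certifying that $\bm{\lambda} = \bm{0}$ is admissible (so $\bm{d}^\star$ can be written in closed form) and the second producing the terminating cancellation.
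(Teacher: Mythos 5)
Your proof is correct and follows essentially the same strategy as the paper: identify $\bm{d}^\star$ (and hence $\bm{\Sigma}$) explicitly using the $\bm{\rho}_0$-hypothesis, observe that $\bm{\Sigma}_{ii} = c_i$ on $I_0$ and $\bm{c}$ vanishes off $I_0$, and then use the $\bm{\rho}_1$-hypothesis to kill $\langle \bm{c}, \bm{M}^\star \bm{y}\rangle$. The only cosmetic difference is that you re-derive the relevant orthogonality from the KKT system of the quadratic program~\eqref{eq:mdef} (introducing the dual multiplier $\bm{\nu}$), whereas the paper invokes the abstract kernel inclusion $\ker(\bm{A}) \subseteq \ker({\bm{M}^\star}^\top\bm{\Sigma})$ from Proposition~\ref{proposition: unique solution of control equation} directly; these are equivalent formulations of the same fact.
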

\begin{proof}
    If is enough to show under given conditions, $\langle\bm{c},\bm{M}^\star(\mathbb{G})\rangle =0$.

    The condition $\text{supp}(\bm{x}^\star (\bm{b}+\epsilon \bm{A}\bm{\rho}))=\text{supp}(\bm{x}^\star (\bm{b}))$ indicates that the  equations $\{\bm{Ax}=0,\ \bm{x}\mathbbm{1}_{\bm{x}^\star =0}=-\bm{\rho}\}$ has a solution $\bm{x}_{\bm{\rho}}=\bm{-\rho+\frac{\bm{x}^\star (\bm{b}+\epsilon \bm{A}\bm{\rho})-\bm{x}^\star (\bm{b})}{\epsilon}}.$

    Therefore, when taking $\bm{\rho}_0= \ln(\bm{c})\cdot\bm{\mathbbm{1}}_{\{\bm{x}^\star =0\}}$, 
    we have $\bm{x}_{\bm{\rho}_0}$ satisfy
    \begin{equation}
    \label{equ corrolory optimal}
        \bm{c}=\exp(-\bm{x}_{\bm{\rho}_0})\bm{\mathbbm{1}}_{\{\bm{x}^\star =0\}},\quad \bm{A\bm{x}_{\bm{\rho}_0}}=0.
    \end{equation}
    This implies that $\bm{x}_{\bm{\rho}_0}$ satisfies the optimality condition of~\eqref{equ for 1st order} with the exponential penalty function hence $\bm{d}^\star =\bm{x}_{\bm{\rho}_0}$.
    
    Also, when taking $\bm{\rho}_1= \bm{\mathbbm{1}}_{\{\bm{x}^\star =0\}}$. By the definition of  $\bm{M}^\star$, we have  $$\langle\bm{x}_{\bm{\rho}_1},\text{diag}(p''(-\bm{d}^\star )\bm{\mathbbm{1}}_{\{\bm{x}^\star =0\}})\bm{M}^\star(\mathbb{G})\rangle =0.$$
    Since $\bm{x}_{\bm{\rho}_1}\bm{\mathbbm{1}}_{\{\bm{x}^\star =0\}}=\bm{\mathbbm{1}}_{\{\bm{x}^\star =0\}}$ and $p''(-\bm{d}^\star )=\exp(-\bm{x}_{\bm{\rho}_0})$,combined with~\eqref{equ corrolory optimal}, this further implies that 
    $$\langle \bm{\alpha},\text{diag}(p''(-\bm{d}^\star )\bm{\mathbbm{1}}_{\{\bm{x}^\star =0\}})\bm{M}^\star(\mathbb{G})\rangle=\langle p''(-\bm{d}^\star )\bm{\mathbbm{1}}_{\{\bm{x}^\star =0\}},\bm{M}^\star(\mathbb{G})\rangle=\langle\bm{c},\bm{M}^\star(\mathbb{G})\rangle =0.$$
\end{proof}

Corollary~\ref{corollary: OT fast convergence} applies the results of Corollary~\ref{corollary: fast convergence of cost} to the special case of  optimal transport problems where the source and target distributions are identical. 

 \begin{proof}[Proof of Corollary~\ref{corollary: OT fast convergence}]
 The proof consists of verifying that all conditions specified in Corollary~\ref{corollary: fast convergence of cost} are satisfied.

Since $\bm{t}=\bm{s}$ and $c_{i,i}=0$, the optimal transport plan $\bm{\pi}^\star $ only has support on the diagonal entries.   For the symmetric cost matrix,  $\bm{\rho}$ is a symmetric matrix in both scenarios. Therefore, for small enough $\epsilon$, the transportation plan $\bm{\pi}^\star (\bm{t}+\epsilon \sum_{j=1}^N \rho_{i,j},\bm{s}+\epsilon \sum_{i=1}^N \rho_{i,j})$ also has support on the diagonal entries.  \end{proof}

\subsubsection{Bootstrap consistency}
Finally, we demonstrate that the naive bootstrap estimator $\tilde{\bm{x}}_n$, generated using the bootstrap copy of $\bm{b}_n$, is consistent for conducting inferences on $\bm{x}^\star $.

\begin{proof}[Proof of Theorem~\ref{theorem: bootstrap}]
    The proof mainly follows the proof of~\citep[Theorem 3.9.11]{VaaWel96}. 
 Since $\tilde{\bm{b}}_n$ is a bootstrap copy of $\bm{b}_n$ through resampling and the function $h(\bm{M}^\star(\cdot))$ is bounded Lipschitz functions for every $h\in BL_1(\mathbb{R}^k)$,  we have
    \begin{equation}
        \sup_{h \in BL_1(\mathbb{R}^k)} \left| \mathbb{E}\left[h\left(\sqrt{n}\bm{M}^\star(\tilde{\bm{b}}_n - \bm{b}_n)\right) | Z_1, \ldots, Z_n\right] - \mathbb{E}\left[h\left(\sqrt{n}\bm{M}^\star(\bm{b}_n - \bm{b})\right)\right] \right|\overset{P}\to 0.
    \end{equation}

    Since the random vector $\tilde{\bm{b}}_n$ has the same convergence law as $\bm{b}_n$, Theorem~\ref{theorem: limit law} and~\ref{theorem: debiasing} also holds for ${\bm{x}}(r_n,\tilde{\bm{b}}_n)$ and for $\tilde{\bm{x}}_n$. Therefore, we have
    \begin{equation}
        \sqrt{n}(\tilde{\bm{x}}_n - \bm{x}^\star )=\bm{M}^\star(\tilde{\bm{b}}_n-\bm{b})+o_p(1),
    \end{equation}
    and
    \begin{equation}
        \sqrt{n}(\hat{\bm{x}}_n - \bm{x}^\star )=\bm{M}^\star(\bm{b}_n-\bm{b})+o_p(1).
    \end{equation}
    Thus, by subtracting the above two equations we have
    \begin{equation}
        \sqrt{n}(\tilde{\bm{x}}_n-\hat{\bm{x}}_n -\sqrt{n}\bm{M}^\star(\tilde{\bm{b}}_n - \bm{b}_n))\overset{P}\to 0.
    \end{equation}
    Furthermore, for every $\epsilon>0$
    \begin{align*}
         &\sup_{h \in BL_1(\mathbb{R}^k)} \left| \mathbb{E}\left[h\left(\sqrt{n}(\tilde{\bm{x}}_n - \hat{\bm{x}}_n)\right) | Z_1, \ldots, Z_n\right] - \mathbb{E}\left[h\left(\sqrt{n}\bm{M}^\star(\tilde{\bm{b}}_n - \bm{b}_n)\right) | Z_1, \ldots, Z_n\right]\right|\\
         &\leq \epsilon+2\mathbb{P}\left(  \|\sqrt{n}(\tilde{\bm{x}}_n-\hat{\bm{x}}_n -\sqrt{n}\bm{M}^\star(\tilde{\bm{b}}_n - \bm{b}_n))\|\geq\epsilon| Z_1, \ldots, Z_n\right).
    \end{align*}
    Combining with the convergence law for $ \hat{\bm{x}}_n$ given in Theorem~\ref{theorem: debiasing}, we get the desired statement~\eqref{equation: theorem bootstrap}.
    
\end{proof}

\bibliographystyle{abbrvnat}
\bibliography{ref}

\end{document}